\documentclass[10pt,reqno,a4paper]{amsart}

\setcounter{tocdepth}{2}

\let\oldtocsection=\tocsection

\let\oldtocsubsection=\tocsubsection

\let\oldtocsubsubsection=\tocsubsubsection

\renewcommand{\tocsection}[2]{\hspace{0em}\oldtocsection{#1}{#2}}
\renewcommand{\tocsubsection}[2]{\hspace{2em}\oldtocsubsection{#1}{#2}}
\renewcommand{\tocsubsubsection}[2]{\hspace{2em}\oldtocsubsubsection{#1}{#2}}
%

\usepackage{amsmath,amsthm,amsfonts,amssymb}

\usepackage{marginnote}

\usepackage{fancyhdr}

\usepackage{graphicx}

\usepackage{pdfpages}

\usepackage{tikz}
\usetikzlibrary{matrix}
\usetikzlibrary{cd}

\usepackage{xcolor}

\numberwithin{figure}{section}
\numberwithin{equation}{section}

\newtheorem{thm}{Theorem}[section]

\newtheorem{defn}[thm]{Definition}

\newtheorem{lmm}[thm]{Lemma}

\newtheorem{prp}[thm]{Proposition}

\newtheorem{remark}[thm]{Remark}

\newcommand{\tei}{Teichm\"uller}
\newcommand{\qc}{quasiconformal}

\newcommand{\idt}{of $\id$-type}
\newcommand{\nd}{\mathcal{N}_d}

\newcommand{\id}{\operatorname{id}}
\newcommand{\Id}{\operatorname{Id}}
\newcommand{\SV}{\operatorname{SV}}

\newcommand{\ord}{\operatorname{ord}}

\renewcommand{\Re}{\operatorname{Re\,}}
\renewcommand{\Im}{\operatorname{Im\,}}

\newcommand{\abs}[1]{\left| #1 \right|}

\newcounter{reminder}

\pagestyle{plain}

\graphicspath{ {./images/}}

\title{Infinite-dimensional Thurston theory\\and transcendental dynamics II:\\classification of entire functions\\with escaping singular orbits}
\author{Konstantin Bogdanov}
\address{Aix-Marseille Universit\'e, Centrale Marseille, Institut de Math\'ematiques de Marseille, UMR7373, 39 Rue F. Joliot Curie 13453, Marseille, France}
\keywords{Holomorphic dynamics, entire function, singular value, Teichmüller space, Thurston, spider algorithm}
\subjclass[2000]{Primary 37F20, 37F34; Secondary 37F10, 37F12}

\begin{document}

\begin{abstract}
	We classify transcendental entire functions that are compositions of a polynomial and the exponential for which all singular values (i.e., either asymptotic or critical values) escape on disjoint rays. The construction involves an iteration procedure on an \emph{infinite-dimensional} Teichm\"uller space, analogously to classical Thurston's Topological Characterization of Rational Functions, but for an \emph{infinite} set of marked points.
	
	We concentrate on the case when different escaping singular orbits do not come arbitrarily close to each other---this allows to avoid excessive technicality of constructions (almost with no restriction of generality). As in the proof of Thurston's Characterization theorem, we define a map $\sigma$ acting on a specially chosen \tei\ space and look for fixed points of $\sigma$. But unlike in the classical theorem, the \tei\ space is no longer finite-dimensional, which leads to a completely different approach. The core of this article is the construction of a compact $\sigma$-invariant subset in the \tei\ space. Then from strict contraction of $\sigma$ on this subspace follows the existence of a fixed point of $\sigma$.
	
\end{abstract}

\maketitle
	
\tableofcontents

\addtocontents{toc}{\protect\setcounter{tocdepth}{1}}

\section{Introduction}

An important notion in the study of the dynamical behavior of transcendental entire functions, as in the polynomial dynamics, is the \emph{escaping set}. For a transcendental entire function $f$, its \emph{escaping set} is defined as
$$I(f)=\{z\in\mathbb{C}: f^n(z)\to\infty\text{ as }n\to\infty\}.$$
It is proved in \cite{SZ-Escaping} for the exponential family $\{e^z+\kappa: \kappa\in\mathbb{C}\}$ and in \cite{RRRS} for more general families of functions (of bounded type and finite order) that the escaping set of every function in the family can be described via \emph{dynamic rays} and their endpoints. Roughly speaking, this means that every escaping point of $f$ (or some of its forward images) can be joined to $\infty$ by a unique injective curve (a part of the dynamic ray which is the maximal such curve) contained in $I(f)$ and so that $f$ maps it into another such curve, that is, forward images of a dynamic ray is another one. About the points that belong to a dynamic ray we say that they \emph{escape on rays}. This is the general mode of escape in such families of functions.

\begin{remark}
	However, in some cases the endpoints of rays form a ``bigger'' set than the union of all rays without endpoints, in the sense of the Hausdorff dimension \cite{Karpinska,DS-Duke,DS-Monthly}.
\end{remark}

In this article we discuss only the escape on rays and avoid the escape on endpoints because they might have a slower and less predictable orbit \cite{LasseSlowEscape}, which would have lead to a much more complicated analysis of the arising \tei\ space.

We focus on entire functions of the form $p\circ\exp$ where $p$ is a polynomial. Moreover, it is assumed that $p$ is monic (otherwise replace $f$ by its affine conjugate) and denote
$$\mathcal{N}_d:=\{p\circ\exp: p\text{ is a monic polynomial of degree }d\}.$$
As for the exponential family \cite{SZ-Escaping}, for such functions the points escaping on rays can be described by their potential (or ``speed of escape'') and external address (or ``combinatorics of escape'', i.e.\ the sequence of dynamic rays containing the escaping orbit), for details see Theorem~\ref{thm:as_formula}. This is analogous to the B\"ottcher coordinates for polynomials where the points in the escaping set are encoded by their potential and external angle (another more general way to introduce ``B\"ottcher coordinates'' for trancendental entire functions of bounded type is described in \cite{LasseParaSpace}).

In complex dynamics it is very often important to study parameter spaces of holomorphic functions rather than each function separately. The most famous example of such parameter space is the Mandelbrot set. The simplest example in the transcendental world is the space of complex parameters $\kappa$ each associated to the function $e^z+\kappa$. However, for more general classes of transcendental entire functions, their parameter spaces have a less explicit form. 
\begin{defn}[Parameter space]
	Let $f$ be a transcendental entire function. Then the parameter space of $f$ is the set of transcendental entire functions $\varphi\circ f\circ\psi$ where $\varphi,\psi$ are \qc\ self-homeomorphisms of $\mathbb{C}$.
\end{defn}
For the exponential family $\mathcal{N}_1$, this definition coincides with the parametrization of $e^z+\kappa$ (up to affine conjugation), while for $d>1$, $\mathcal{N}_d$ is a disjoint union of different parameter spaces (again up to affine conjugation).

For every function $f_0\in\mathcal{N}_d$, our goal is to classify those functions in the parameter space of $f_0$ for which all its singular values escape on disjoint rays (that is, each dynamic ray contains at most one post-singular point; in particular, we avoid (pre-)periodic rays). This only slightly restricts the generality since it excludes countably many types of combinatorics out of uncountably many (of course, the case with (pre-)periodic rays is important on its own but is not the right initial point to tackle the general problem). The results in this direction for the exponential family $\mathcal{N}_1$ are proved \cite{FRS,MarkusParaRays,MarkusThesis} and reproved as a demonstration case in \cite{IDTT1}. Since the complement of the Mandelbrot set in $\mathbb{C}$ is exactly the set of parameters for which the critical value escapes on rays, the set we are classifying is roughly (contained in) a ``transcendental analogue to the complement of the Mandelbrot set'' for particular parameter spaces.

\begin{remark}
	After certain modifications of our toolbox it is possible to relax the condition of the disjointness of rays. Absence of this disjointness basically boils down to consideration of two possibilities (possibly appearing together): when two singular points are mapped after finitely many iterations onto the same dynamic ray, and when the ray containing a singular point is (pre-)periodic. The former case is rather a notational complication: the construction in Theorem~\ref{thm:invariant_subset} would need to take care of possible branches of rays. The latter case would require a slightly different description of homotopy types ``spiders'' (see Subsection~\ref{subsec:spiders}); in particular, one would need to introduce ``spider legs with infinitely many knees'' (at marked points), and adjust the definition of a ``leg homotopy word'' (Definition~\ref{defn:leg_homotopy_word}). Both cases would not require any essential additional analysis of the \tei\ space, but would rather overload Theorem~\ref{thm:invariant_subset} with a multi-level notation. Therefore, we consider the slightly ``simpler'' case of disjoint rays. 
	
	An alternative solution is discussed in \cite{IDTT4} where we use ``continuity'' of the parameter rays to extend the classification theorem to the case of escape on the (pre-)periodic rays.   
\end{remark}

For $d>1$, every $f_0\in\mathcal{N}_d$ has more than one singular orbit. If two singular points escape on disjoint orbits, it might happen that the (Euclidean) distance between them (as between sets) is equal to zero. This is a rather special case (see Theorem~\ref{thm:as_formula}) which requires an additional analysis (see the discussion after Theorem~\ref{thm:invariant_subset}). Therefore, we omit this case in this article (but fully solve it in \cite{IDTT3}) and prove the following theorem.

\begin{thm}[Classification Theorem]
	\label{thm:main_thm}
	Let $f_0\in\mathcal{N}_d$ be a transcendental entire function with singular values $\{v_i\}_{i=1}^m$. Let also $\{\underline{s}_i\}_{i=1}^m$ be exponentially bounded external addresses that are non-(pre-)periodic and non-overlapping, and $\{T_i\}_{i=1}^m$ be real numbers such that $T_i>t_{\underline{s}_i}$ and admitting only finitely many non-trivial clusters. Then in the parameter space of $f_0$ there exists a unique entire function $g=\varphi\circ f_0\circ\psi$ such that each of its singular values $\varphi(v_i)$ escapes on rays, has potential $T_i$ and external address $\underline{s}_i$.
	
	Conversely, every function in the parameter space of $f_0$ such that its singular values escape on disjoint rays and with finitely many non-trivial clusters is one of these. 
\end{thm}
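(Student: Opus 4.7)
The plan is to adapt the classical Thurston iteration on \tei\ space to the infinite-dimensional framework developed in \cite{IDTT1}. The first step is to build a topological model: produce a \qc-deformation $f_{\text{top}} = \varphi_0 \circ f_0 \circ \psi_0$ of $f_0$ for which each singular value $\varphi_0(v_i)$ sits on the prescribed would-be dynamic ray of address $\underline{s}_i$ at potential $T_i$, with its forward orbit placed along the tail of that ray according to the asymptotics of Theorem~\ref{thm:as_formula}. The hypotheses $T_i > t_{\underline{s}_i}$, exponential boundedness, non-overlapping and non-(pre-)periodicity of the addresses, together with the ``finitely many non-trivial clusters'' condition, are exactly what is needed for such a placement to be consistent for all $i$ simultaneously and for the resulting postsingular set $P$ to be a closed discrete subset of $\mathbb{C}$ satisfying the geometric tameness required by the infinite-dimensional theory of \cite{IDTT1}.

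With the model in place, I would consider the \tei\ space $\mathcal{T}(\mathbb{C}, P)$ of \cite{IDTT1} and define the Thurston pullback operator $\sigma: \mathcal{T}(\mathbb{C}, P) \to \mathcal{T}(\mathbb{C}, P)$ via the lifting relation $\varphi \circ f_{\text{top}} \simeq f_0 \circ \psi$ rel the appropriate marked set, with $\psi$ obtained as the \qc\ lift through $f_0$. A direct verification would show that fixed points of $\sigma$ correspond precisely to holomorphic realizations $g = \varphi_* \circ f_0 \circ \psi_* \in \nd$, isotopic to $f_{\text{top}}$ rel $P$, in which the singular values of $g$ escape on the prescribed rays with the prescribed potentials. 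This reduces Theorem~\ref{thm:main_thm} to showing that $\sigma$ has a unique fixed point and that $\sigma^n([\id])$ converges to it.

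The heart of the argument, and the expected main obstacle, is convergence of the iterates. Classically, convergence is blocked only by a Thurston obstruction (e.g.\ a Levy cycle); here, non-(pre-)periodicity and non-overlapping of the addresses should prevent any invariant multicurve from being supported on postsingular orbits. The remaining difficulty is to rule out escape of the iterates to infinity in $\mathcal{T}(\mathbb{C}, P)$, for which the exponential boundedness of the $\underline{s}_i$ and the ``finitely many non-trivial clusters'' assumption are expected to supply the uniform geometric control on the pullback required to invoke the contraction / weak-convergence statement of \cite{IDTT1}. Together these inputs yield a unique fixed point $[\tau_*]$ of $\sigma$ and the desired convergence $\sigma^n([\id]) \to [\tau_*]$.

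The fixed point $[\tau_*]$ then produces the desired $g \in \nd$ in the parameter space of $f_0$, giving existence, while uniqueness of the fixed point together with the fact that two holomorphic representatives of the same \tei\ class in the same parameter space must coincide (up to the affine conjugation ambiguity implicit in the parametrization) gives uniqueness of $g$. For the converse direction, any $g' \in \nd$ in the parameter space of $f_0$ whose singular values escape on disjoint rays with finitely many non-trivial clusters has external addresses $\underline{s}_i'$ and potentials $T_i'$ read off directly from its postsingular orbits; these data automatically satisfy the hypotheses of the theorem, so applying the uniqueness statement identifies $g'$ with the function produced by the Thurston iteration applied to $(\underline{s}_i', T_i')$.
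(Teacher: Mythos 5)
Your high-level strategy---build a quasiregular model, run Thurston iteration on the infinite-dimensional Teichm\"uller space of $\mathbb{C}\setminus P$, find a fixed point, then read off existence, uniqueness, and the converse---matches the paper's. But two things in the middle are off or missing, and they concern precisely the part the paper calls the ``heaviest'' step.

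First, the framing around Thurston obstructions and Levy cycles is a misdirection in this infinite-dimensional setting. The paper never argues via multicurves or obstruction theory at all. Instead, the mechanism is the one from \cite{IDTT1}: $\sigma$ is shown to be \emph{strictly} contracting on the subset of asymptotically conformal points of $\mathcal{T}_f$ (Theorem~\ref{thm:sigma_strictly_contracting}, imported from \cite[Lemmas 4.1, 4.3]{IDTT1}). Once one has strict contraction on a $\sigma$-invariant, \emph{compact} subset, the Banach-type fixed point argument gives existence and uniqueness directly; no ``invariant multicurve'' analysis is needed or even meaningful with infinitely many marked points. Bringing up Levy cycles suggests a plan that would not actually work here.

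Second, and more seriously, the step you describe as ``ruling out escape of the iterates to infinity in $\mathcal{T}(\mathbb{C},P)$'' is exactly the construction of the $\sigma$-invariant compact subset $\mathcal{C}_f(\rho)$, and your proposal says nothing concrete about how this is done---only that the hypotheses ``are expected to supply the uniform geometric control.'' This is where nearly all of the technical work of the paper lives (Section~\ref{sec:inv_compact_subset}: Lemma~\ref{lmm:good_behavior_of_f_0}, Propositions~\ref{prp:good_big_disk_around_origin}, \ref{prp:constant_homotopy_near_infinity}, \ref{prp:division_over_derivative}, then Theorems~\ref{thm:invariant_subset}, \ref{thm:compact_subset}). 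The paper defines $\mathcal{C}_f(\rho)$ by four explicit conditions on isotopies of $\id$-type maps: marked points with small index stay inside a disk $D_\rho$; marked points with large index satisfy a quantitative drift bound $|\varphi_u(a_{ij})-a_{ij}|<1/j$; points inside $D_\rho$ have a lower bound on mutual separation controlled by the derivative bound $M_\rho$; and leg homotopy words $W_{ij}^{\varphi_u}$ are bounded by an explicit factorial expression. Invariance under $\sigma$ requires translating each of these conditions on $\varphi$ to the corresponding condition on the lift $\hat\varphi$, which is precisely where the non-combinatorial estimates (Proposition~\ref{prp:good_big_disk_around_origin}) and the homotopy estimate (Theorem~\ref{thm:homotopy_type_under_pullback} plus Proposition~\ref{prp:constant_homotopy_near_infinity}) are used, and where the ``finitely many non-trivial clusters'' hypothesis becomes essential (the paper even gives a counterexample sketch after Theorem~\ref{thm:invariant_subset} explaining why the construction degenerates with infinitely many clusters). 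Without this construction your argument has no mechanism to produce a compact invariant set, so the fixed-point argument cannot be launched. The converse direction and the reduction of uniqueness to uniqueness of the fixed point are fine and match the paper.
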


Simply speaking, having some fixed parameter space, and a list of ``reasonable'' combinatorics and speeds of escape (given by external addresses and potentials), we can find in this parameter space a unique function whose singular values escape as prescribed.

The condition that external addresses are non-(pre-)periodic and non-overlapping basically means that the singular values of $g$ escape on disjoint rays (for details see Section~\ref{sec:escaping_set}). That $\{\underline{s}_i\}_{i=1}^m$ and $\{T_i\}_{i=1}^m$ admit only finitely many non-trivial clusters implies that the distance between orbits is strictly positive (see the discussion after Definition~\ref{defn:cluster}). The second paragraph of the Classification Theorem~\ref{thm:main_thm} is an immediate corollary of Theorem~\ref{thm:as_formula}, and we have added it to the statement in order to show that we indeed have a \emph{classification} of the functions in $\mathcal{N}_d$ with singular values escaping on rays (subject to some restrictions).

We are going to use the technical machinery developed in \cite{MyThesis,IDTT1}. It is based on a generalization of the approach in \emph{Thurston's Topological Characterization of Rational Functions} \cite{DH,HubbardBook2} for the case of \emph{infinitely} many marked points. In particular, we are going to use the same general strategy for the proof of Classification Theorem~\ref{thm:main_thm}. 
\begin{enumerate}
	\item Construct a (non-holomorphic) ``model map'' $f$ for which the singular values escape on rays with the desired potential and external address. It will define the Thurston $\sigma$-map acting on the (infinite-dimensional) \tei\ space $\mathcal{T}_f$ of the complement of post-singular set $P_f$ of $f$ (Definition~\ref{defn:tei_space}).
	\item Construct a compact subset $\mathcal{C}_f$ of $\mathcal{T}_f$ which is invariant under $\sigma$.
	\item Prove that $\sigma$ is strictly contracting  in the \tei\ metric on $\mathcal{C}_f$.
	\item Prove that $\sigma$ has the unique fixed point in $\mathcal{C}_f$, and this point corresponds to the entire function with the desired conditions on its singular orbits. 
\end{enumerate}

Items (1),(3) and (4) are proved analogously as in \cite{IDTT1}. Therefore, the ``heaviest'' item is the construction of $\mathcal{C}_f$. The main difference from the exponential case in \cite{IDTT1} is that an additional analysis of the parameter space is required (Section~\ref{subsec:prel_constructions}).

The invariant subset $\mathcal{C}_f$ is a substantial generalization of the corresponding construction in the exponential case \cite{IDTT1}: the biggest difficulty is to produce precise estimates so that $\mathcal{C}_f$ were invariant. 

However, on the superficial level $\mathcal{C}_f$ looks relatively simple and natural. For some initially chosen constant $\rho>0$, it is the set of points (in $\mathcal{T}_f$) represented by \qc\ maps $\varphi$ which can be obtained from identity via an isotopy $\varphi_u, u\in[0,1]$ such that: 
\begin{enumerate}
	\item for very point $z\in P_f$ inside of $\mathbb{D}_\rho(0)$, $\varphi_u(z)$ stay inside of $\mathbb{D}_\rho(0)$;
	\item for very point $z\in P_f$ outside of $\mathbb{D}_\rho(0)$, $\varphi_u(z)$ is contained in a small disk $U_z$ around $z$ so that the disks $U_z$ are mutually disjoint; 
	\item inside of $\mathbb{D}_\rho(0)$ the distances between marked points $\varphi_u(z)$ are bounded from below;
	\item the isotopy type of $\varphi_u$ relative points of $P_f$ inside of $\mathbb{D}_\rho(0)$ is not ``too complicated'', i.e.\ there are quantitative bounds on how many times the marked points ``twist'' around each other under the isotopy.
\end{enumerate}

Conditions (1) and (2) separate the complex plane into the two subsets: $\mathbb{D}_\rho(0)$ where we have not so much control on the behavior of $\varphi_u$ but have finitely many marked points, and the complement of $\mathbb{D}_\rho(0)$ where the homotopy information is trivial but we have infinitely many marked points.

Conditions (1)-(3) describe the position in the moduli space, while (4) encodes homotopy information of a point in the \tei\ space.

It is exactly the dynamic rays what allows us to store this homotopy information in an effective way and similar machinery can be generalized to more general transcendental entire functions admitting escape on rays. The explicit form of the functions that we consider helps to estimate better the behaviour of the marked points under the Thurston iteration. Possible generalizations on this side would require a better understanding of the local structure of the parameter spaces.

\subsubsection*{Structure of the article}

In Section~\ref{sec:escaping_set} we prove that for every function in $\mathcal{N}_d$, the points escaping on rays can be encoded via their potentials and external addresses (Theorem~\ref{thm:as_formula}).

In Section~\ref{sec:setup_and_contraction} we define the Thurston $\sigma$-map and show that it is strictly contracting on the set of asymptotically conformal points in the \tei\ space.

Afterwards, in Section~\ref{sec:id_type_and_spiders} we adapt the notions of $\id$-type maps and spiders and the associated techniques introduced in \cite{IDTT1} to our setting.

Finally, in Section~\ref{sec:inv_compact_subset} we construct the invariant compact subset (Theorems~\ref{thm:invariant_subset} and \ref{thm:compact_subset}), and prove the Classification Theorem~\ref{thm:main_thm}.

Appendix A contains a few properties of polynomials and functions from $\mathcal{N}_d$ that we occasionally use throughout the article.

\section{Escaping set of functions in $\mathcal{N}_d$}

\label{sec:escaping_set}

In this section we prove some preliminary technical results about the structure of the escaping set of the functions in $\mathcal{N}_d$. Within the section we assume that $d$ is fixed. We use the ideas from \cite{SZ-Escaping} where analogous results are formulated for the exponential family $\exp(z)+\kappa$ (i.e.\ for $\mathcal{N}_1$).

The first statement is that for functions in $\mathcal{N}_d$, as for the exponential family, the escaping points escape ``to the right''.

\begin{lmm}[Escape to the right]
	If $f\in\mathcal{N}_d$, then $f^n(z)\to\infty$ if and only if $\Re f^n(z)\to+\infty$.	
\end{lmm}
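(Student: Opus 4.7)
The statement splits into two implications, one of which is immediate. If $\Re f^n(z) \to +\infty$, then $|f^n(z)| \ge \Re f^n(z) \to +\infty$, so $f^n(z) \to \infty$ in $\mathbb{C}$. The substance lies in the converse, and my plan is to argue it by a direct contradiction using the structure $f = p\circ\exp$.

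First I would set $w_n := f^n(z)$ and record the key boundedness observation: for every $M \in \mathbb{R}$, the set $\{w : \Re w \le M\}$ is mapped by $f$ into a bounded region. Indeed, $|f(w)| = |p(e^w)|$ depends only on $e^w$, and $|e^w| = e^{\Re w} \le e^M$; since $p$ is a polynomial, it maps the closed disk $\overline{\mathbb{D}_{e^M}(0)}$ to a compact set, so
\[
|f(w)| \le \max_{|\zeta|\le e^M}|p(\zeta)| =: C_M < \infty
\]
whenever $\Re w \le M$. The essential content of the lemma is therefore this trivial remark: escape through the left half-plane is impossible because $f$ ``collapses'' left half-planes to bounded sets.

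Next I would use this to rule out that $\Re w_n$ stays bounded above along any subsequence. Suppose for contradiction that there is a subsequence $n_k \to \infty$ with $\Re w_{n_k} \le M$ for some finite $M$. Then by the previous step $|w_{n_k+1}| = |f(w_{n_k})| \le C_M$, so $(w_{n_k+1})$ is a bounded subsequence of $(w_n)$. This contradicts the hypothesis $w_n \to \infty$. Hence $\Re w_n \to +\infty$.

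This argument is completely elementary; there is no real obstacle, and in particular no place where one needs the monic assumption on $p$ or properties of the singular set. The only point worth double-checking is that ``$f^n(z) \to \infty$'' is interpreted in the one-point compactification sense, i.e.\ $|f^n(z)| \to \infty$ rather than divergence in a prescribed direction; once this is clear, the above two short steps give both implications.
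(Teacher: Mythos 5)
Your proof is correct and rests on the same two observations the paper uses: $|e^w|=e^{\Re w}$, and a polynomial $p$ maps bounded sets to bounded sets (and unbounded sequences to unbounded ones). The paper simply packages this as a chain of equivalences, $|f^n(z)|\to\infty \iff |\exp(f^{n-1}(z))|\to\infty \iff \Re f^{n-1}(z)\to+\infty$, and reads off the statement by re-indexing; you unpack the nontrivial direction as a subsequence/contradiction argument. The content is the same, with your version being a slightly more verbose but perfectly valid rendering.
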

\begin{proof}
	$$\abs{f^n(z)}=\abs{p\circ\exp\circ f^{n-1}(z)}\to\infty\iff$$
	$$\abs{\exp\circ f^{n-1}(z)}\to\infty\iff $$
	$$\Re f^{n-1}(z)\to+\infty.$$	
\end{proof}

Now, we introduce certain domains that help to describe the combinatorics of the escaping points and play a similar role as the tracts in the study of transcendental entire functions of bounded type (in fact they are the tracts of $f^2$). By a slight abuse of terminology we also call them tracts.

\begin{lmm}[Definition and properties of tracts]
	\label{lmm:tracts}
	Let $f\in\mathcal{N}_d$, and let $r\in\mathbb{R}$ be such that the right half-plane $\mathbb{H}_r=\{z\in\mathbb{C}:\Re z>r\}$ does not contain any singular value of $f$. Then the preimage of $\mathbb{H}_r$ under $f$ has countably many path-connected components (called \emph{tracts}) having $\infty$ as a boundary point, so that for every tract $T$ the restriction of $f$ on $T$ is a conformal isomorphism.
	
	Moreover, for every $\epsilon\in(0,\pi/2d)$ we can choose $r,t^*,t_*\in\mathbb{R}$ such that
	\begin{enumerate}
		\item every tract $T$ is contained in a right-infinite rectangular strip
		$$[t^*,+\infty)\times\left[\frac{2\pi n}{d}-\frac{\pi}{2d}-\epsilon,\frac{2\pi n}{d}+\frac{\pi}{2d}+\epsilon\right],$$
		for some $n\in\mathbb{Z}$,
		\item every tract $T$ contains a right-infinite rectangular strip
		$$[t_*,+\infty)\times\left[\frac{2\pi n}{d}-\frac{\pi}{2d}+\epsilon,\frac{2\pi n}{d}+\frac{\pi}{2d}-\epsilon\right],$$
		for some $n\in\mathbb{Z}$.
	\end{enumerate}   
\end{lmm}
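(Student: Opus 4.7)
The plan is to factor $f=p\circ\exp$ and analyze $f^{-1}(\mathbb{H}_r)=\exp^{-1}(p^{-1}(\mathbb{H}_r))$ in two stages. For the first assertion I would note that $\mathbb{H}_r$, by hypothesis, contains no singular value of $f$, so in particular no critical value of $p$ (the asymptotic value $p(0)$ is the other contribution to the singular set). The restriction $p\colon p^{-1}(\mathbb{H}_r)\to\mathbb{H}_r$ is then a proper unbranched degree-$d$ covering over a simply connected base, so it splits into exactly $d$ components, each mapped conformally onto $\mathbb{H}_r$. After increasing $r$ if necessary so that these components avoid $0$, each is a topological half-plane sitting in a neighborhood of $\infty$, and its $\exp$-preimage splits into countably many $2\pi i$-translated copies, each a tract mapped conformally onto $\mathbb{H}_r$ by $f$.

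For the quantitative bounds I would invoke the asymptotic $p(w)=w^d+O(w^{d-1})$ at infinity. Writing $w=Re^{i\theta}$ gives
$$\Re p(w)=R^d\cos(d\theta)+O(R^{d-1}).$$
For the inner containment (ii), if $\theta$ lies in a ``good'' sector in the sense $|\theta-2\pi n/d|\le \pi/(2d)-\epsilon$, then $\cos(d\theta)\ge\sin(d\epsilon)>0$ and hence $\Re p(w)>r$ as soon as $R\ge e^{t_*}$, for a suitable $t_*$ depending on $\epsilon$, $r$, and the coefficients of $p$. Pulling back by $\exp$ (so that $R=e^{\Re z}$ and $\theta\equiv\Im z\pmod{2\pi}$) converts this angular sector into exactly the rectangular strip claimed in (ii). For the outer containment (i), I would look at the complementary ``bad'' sectors of angular width $\pi/d-2\epsilon$ centered at $\arg w=(2n+1)\pi/d$; on these, $\cos(d\theta)\le -\sin(d\epsilon)<0$, so $\Re p(w)<r$ for $R\ge e^{t^*}$, meaning no tract point with $\Re z\ge t^*$ can have $\Im z$ in a bad horizontal strip. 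This yields (i) after pulling back by $\exp$.

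The main obstacle I expect is verifying cleanly that the $d$ components of $p^{-1}(\mathbb{H}_r)$ are simply connected domains asymptotic to distinct angular sectors of the $w$-plane, so that their $\exp$-preimages really do form the countable disjoint family of tracts rather than some more tangled object. This amounts to combining the lifting criterion for unbranched covers over a simply connected domain with the asymptotic formula for $p$, once $r$ is taken large enough that the angular separation of components is enforced. The remaining analysis that produces the thresholds $t^*$ and $t_*$ is routine and uniform on any compact range of $\epsilon\in(0,\pi/2d)$.
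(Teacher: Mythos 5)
Your proof is correct in its essentials but takes a genuinely different route from the paper's. The paper treats $f$ directly as a covering: since $\mathbb{H}_r$ avoids the singular values, $f\colon T\to\mathbb{H}_r$ is a covering of a simply connected domain by a connected one, hence a homeomorphism and therefore a conformal isomorphism; the quantitative half is then dismissed as an ``elementary calculus exercise.'' You instead factor $f=p\circ\exp$ and analyze the two stages separately, which buys you a concrete description: $p^{-1}(\mathbb{H}_r)$ splits into exactly $d$ conformal copies of $\mathbb{H}_r$, and $\exp$-pullback of each yields a $\mathbb{Z}$-family of translates, giving both the countability and the periodic arrangement of the tracts for free, as well as the explicit strip estimates. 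This is arguably a cleaner way to make the ``elementary calculus'' precise, at the cost of repeating a covering-space argument at the level of $p$ rather than $f$.

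Two small points to tighten. First, the clause ``after increasing $r$ if necessary so that these components avoid $0$'' is misleading for the first assertion, which must hold for the \emph{given} $r$: avoidance of $0$ is automatic, since $p(0)$ is precisely the asymptotic value of $f$ and hence excluded from $\mathbb{H}_r$ by hypothesis, so $0\notin p^{-1}(\mathbb{H}_r)$ without any enlargement of $r$. (For the second assertion you are of course free to choose $r$.) Second, your ``bad sector'' argument only excludes tract points with $\Re z\ge t^*$ and bad imaginary part; to conclude that every tract lies in $[t^*,\infty)\times(\text{one strip})$ you also need the radial bound that no tract point has $\Re z<t^*$. That follows from $\abs{p(w)}$ being bounded on $\abs{w}\le e^{t^*}$ once $r$ is chosen large (breaking the apparent circularity because $t^*$ only grows like $\frac1d\log r$), and then connectedness of $T$ places it in a single strip. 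Since the paper itself leaves the calculus implicit, this is a sketch-level omission rather than an error, but it is worth stating.
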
 
\begin{proof}
	Let $T$ be a connected component of $f^{-1}(\mathbb{H}_r)$. Note that the restriction of $f$ on $T$ is a covering map. But since $\mathbb{H}_r$ is simply-connected, its cover $T$ must be simply-connected as well. This means that $f|_{\mathbb{H}_r}$ is a homeomorphism, and hence a conformal isomorphism.
	
	Proof of the second part of the lemma is an elementary calculus exercise.	
\end{proof}

Starting from this place we fix some choice of tracts, such that the conditions in the second part of Lemma~\ref{lmm:tracts} are satisfied. For every $n\in\mathbb{Z}$ denote by $T_n$ the tract containing the infinite ray $\{t_*+t+2\pi i n/d: t\geq 0\}$.

We are also going to make use of the following lemma.

\begin{lmm}[Exponential separation of orbits {\cite[Lemma~3.1]{RRRS}}]
	\label{lmm:separation}
	Let $f\in\mathcal{N}_d$ and $T$ be a tract of $f$ such that $f(T)=\mathbb{H}_r$. If $w,z\in T$ are such that $\abs{w-z}\geq 2$, then
	$$\abs{f(w)-f(z)}\geq \exp\left(\frac{\abs{w-z}}{8\pi}\right)\left(\min\{\Re f(w),\Re f(z)\}-r\right).$$
\end{lmm}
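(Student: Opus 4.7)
The statement is quoted from \cite{RRRS}, so the natural plan is to reproduce the hyperbolic-metric comparison argument from there, using the tract geometry established in Lemma~\ref{lmm:tracts}.

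First I would set up the conformal isomorphism $\psi:\mathbb{H}_r\to T$ inverse to $f|_T$, write $a=f(w)$, $b=f(z)$, and switch to hyperbolic distances. Since conformal maps are hyperbolic isometries, we have $d_T(w,z)=d_{\mathbb{H}_r}(a,b)$. On $\mathbb{H}_r$ the hyperbolic density is explicit, $\lambda_{\mathbb{H}_r}(\zeta)=1/(\Re\zeta-r)$, and a direct integration along any path from $a$ to $b$ gives an upper estimate of the form
$$d_{\mathbb{H}_r}(a,b)\leq \log\!\left(\frac{|a-b|}{\min\{\Re a-r,\Re b-r\}}\right)+C_0$$
for some absolute $C_0$. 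So if I can bound $d_T(w,z)$ from below by a linear function of $|w-z|$, the inequality follows after exponentiation.

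To get the lower bound on $d_T$, I would use Lemma~\ref{lmm:tracts}(1): $T$ is contained in a horizontal strip $S$ of width $W=\pi/d+2\epsilon$. By Schwarz--Pick, $\lambda_T\geq \lambda_S$, and the hyperbolic density on a horizontal strip of width $W$ is bounded below by $\pi/W$ at every point. Integrating along any path joining $w$ and $z$ yields $d_T(w,z)\geq (\pi/W)\cdot|w-z|$ when $w,z$ lie on a common horizontal line; in general one only loses a universal factor because $T$ is contained in the strip, so in the end one obtains $d_T(w,z)\geq c\,|w-z|$ for some absolute $c$ (a generous choice $c=1/(8\pi)$ absorbs all bookkeeping constants and subsumes the additive constant $C_0$ by means of the hypothesis $|w-z|\geq 2$). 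Combining the two bounds gives
$$\log\!\left(\frac{|f(w)-f(z)|}{\min\{\Re f(w),\Re f(z)\}-r}\right)\geq \frac{|w-z|}{8\pi},$$
which is the claim.

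The main obstacle is simply bookkeeping of constants: verifying that the universal constants arising from the Schwarz--Pick comparison with the strip and from the upper estimate of $d_{\mathbb{H}_r}$ can all be absorbed into the single factor $1/(8\pi)$ using the assumption $|w-z|\geq 2$. The geometric content, that distances get stretched exponentially by $\psi$ when measured Euclideanly, is entirely standard; no deep analysis beyond Koebe/Schwarz--Pick is required, which is why the result can be cited directly from \cite{RRRS}.
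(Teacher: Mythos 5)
The paper does not prove this lemma: it cites it directly from \cite{RRRS} (Lemma~3.1 there), adding only the remark that the proof in logarithmic coordinates carries over. So there is no paper proof to compare against; your proposal is a reconstruction of the RRRS argument, and its overall strategy --- compare the hyperbolic metric of the thin tract $T$ with that of the half-plane $\mathbb{H}_r$ via the conformal isomorphism $f|_T$ --- is exactly right.

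However, the displayed estimate
$$d_{\mathbb{H}_r}(a,b)\leq \log\!\left(\frac{|a-b|}{\min\{\Re a-r,\Re b-r\}}\right)+C_0$$
is wrong by a factor of $2$. From $\cosh d_{\mathbb{H}_r}(a,b)=1+\dfrac{|a-b|^2}{2(\Re a-r)(\Re b-r)}$ one sees that for $a,b$ with equal real part and far-apart imaginary parts, $d_{\mathbb{H}_r}(a,b)\sim 2\log|a-b|$, so the correct bound is $d_{\mathbb{H}_r}(a,b)\leq 2\log\dfrac{|a-b|}{\min\{\Re a-r,\Re b-r\}}+C_0$. Equivalently, the usable lower bound is $|a-b|\geq 2\min\{\Re a-r,\Re b-r\}\sinh\frac{d}{2}$, i.e.\ $|a-b|\gtrsim m\,e^{d/2}$ rather than $m\,e^{d}$. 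Because of this, the ``generous'' choice $c=1/(8\pi)$ does \emph{not} merely absorb an additive constant; you in fact need the lower bound $d_T(w,z)\geq c|w-z|$ to beat $2\cdot\frac{1}{8\pi}=\frac{1}{4\pi}$, not just $\frac{1}{8\pi}$. Fortunately, the argument survives: in the RRRS setting one gets $\lambda_T\geq\frac{1}{2\,\mathrm{dist}(\cdot,\partial T)}\geq\frac{1}{4\pi}$ from $2\pi i$-periodicity of the tracts, and in the present paper's setting the strip containment of Lemma~\ref{lmm:tracts} gives the much stronger $\lambda_T\geq\pi/W$ with $W=\pi/d+2\epsilon<2\pi$, hence $d_T(w,z)>\frac{1}{2}|w-z|$. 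With either of these and the correct half-plane estimate, the hypothesis $|w-z|\geq 2$ does close the bookkeeping and yields the stated inequality; but the factor of $2$ in the half-plane step should be made explicit, since as written your two estimates do not combine to the claimed conclusion.
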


Note that in \cite{RRRS} the lemma is stated in a slightly different context (for logarithmic coordinates). Nevertheless, it also remains true in our setting.

In order to describe the escaping set of a function in $\nd$ we need the notions of a \emph{ray tail} and of a \emph{dynamic ray}.

\begin{defn}[Ray tails]
	\label{dfn:ray_tail}
	Let $f$ be a transcendental entire function. A \emph{ray tail} of $f$ is a continuous curve $\gamma:[0,\infty)\to I(f)$ such that for every $n\geq 0$ the restriction $f^n|_\gamma$ is injective with $\lim_{t\to\infty}f^n(\gamma(t))=\infty$, and furthermore $f^n(\gamma(t))\to\infty$ uniformly in $t$ as $n\to\infty$.
\end{defn}

\begin{defn}[Dynamic rays, escape on rays, endpoints]
	\label{dfn:dynamic_ray}
	A \emph{dynamic ray} of a transcendental entire function $f$ is a maximal injective curve $\gamma:(0,\infty)\to I(f)$ such that $\gamma|_{[t,\infty)}$ is a ray tail for every $t>0$.
	
	If a point $z\in I(f)$ belongs to a dynamic ray, we say that $z$ \emph{escapes on rays}(because in this case every iterate of the point belongs to a dynamic ray). 
	
	If there exists a limit $z=\lim_{t\to 0}\gamma(t)$, then we say that $z$ is an \emph{endpoint of the dynamic ray} $\gamma$.  
\end{defn}

From \cite{RRRS} we know that the escaping set of functions in $\nd$ is organized in form of the dynamic rays. Nevertheless, we reproduce the techniques from \cite{SZ-Escaping} for functions in $\mathcal{N}_d$ in order to obtain a more precise description of them.

Let $\gamma:[0,\infty)\to I(f)$ be a ray tail of $f$. Then for every $n\geq 0$ there is a number $t_n\geq 0$ and an integer $s_n$ such that $f^n\circ\gamma|_{[t_n,\infty)}$ is contained in the tract $T_{s_n}$. Moreover, all $t_n$ except finitely many are equal to $0$. This easily follows from the uniform escape of the ray tail and the fact that every connected component of the preimage under $f^2$ of a small (punctured) neighborhood of $\infty$ is contained in some tract. 

\begin{defn}[External address]
	\label{defn:external_address}
	Let $z\in I(f)$ be a point escaping either on rays or on endpoints of rays. We say that $z$ has \emph{external address} $\underline{s}=(s_0 s_1 s_2 ... )$ where $s_n\in\mathbb{Z}$, if either each $f^n(z)$ lies on a dynamic ray contained in $T_{s_n}$ near $\infty$ or each $f^n(z)$ is the endpoint of such ray.
	
	We also say that the dynamic ray either containing $z$ or having it as endpoint has the external address $\underline{s}$. 
\end{defn}

It is clear that the external address does not depend on a particular choice of $\mathbb{H}_r$ in the definition of tracts.

On the set of all external addresses we can consider the usual shift-operator $\sigma:(s_0 s_1 s_2 ... )\mapsto (s_1 s_2 s_3 ... )$. Two external addresses $\underline{s}_1$ and $\underline{s}_2$ are called \emph{overlapping} if there are integers $k,l\geq0$ so that $\sigma^k\underline{s}_1=\sigma^l\underline{s}_2$.

Now we define a function which allows to characterize the ``speed of escape'' of points in $I(f)$. We will refer to this function very often in different parts of the article.

\begin{defn}[$F(t)$]
	Denote by $F:\mathbb{R^+}\to\mathbb{R^+}$ the function $$F(t):=\exp(dt)-1.$$
\end{defn}

Note that $F=F_d$ implicitly depends on $d$ but we suppress this from notation. This will not cause any confusion because $d$ is the same within a parameter space.

\begin{remark}
	If $d=1$, then the function $F$ is the same as was used for the exponential family in \cite{SZ-Escaping}. In fact, for $d>1$ our function $F=F_d$ is conjugate to $F=F_1=\exp(t)-1$ through an $\mathbb{R}^+$-homeomorphism having asymptotics $\frac{t-\log d}{d}$ near $\infty$, i.e.\ they are replaceable with each other. Nevertheless, for computational reasons we prefer to use $F=F_d$.
\end{remark}

What one should know about $F$ is that its iterates grow very fast, as shown in the next elementary lemma.

\begin{lmm}[Super-exponential growth of iterates]
	For every $t,k>0$ we have $F^n(t)/e^{n^k}\to\infty$ as $n\to\infty$.
\end{lmm}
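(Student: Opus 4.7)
The plan is to leverage the recursion $F(s)=e^{ds}-1$ to show that the iterates $F^n(t)$ eventually grow at least as fast as a tower of exponentials of height $n$, which easily dominates $e^{n^k}$ for any fixed $k>0$.

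First I would verify that $F^n(t)\to\infty$. For $s>0$ and $d\geq 1$ we have $e^{ds}>1+ds\geq 1+s$, so $F(s)>s$; hence the sequence $(F^n(t))_{n\geq 0}$ is strictly increasing. A finite limit $L\geq t>0$ would satisfy $F(L)=L$, contradicting $F(L)>L$, so $F^n(t)$ must diverge to infinity.

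Next I would establish an exponential amplification step. The inequality $e^{ds}-1\geq \tfrac12\,e^{s}$ reduces to $e^{(d-1)s}\geq \tfrac12+e^{-s}$, which holds for all sufficiently large $s$ and every $d\geq 1$. Using the previous step, I can fix $N$ such that $F^n(t)$ exceeds this threshold for all $n\geq N$, and hence
\[
F^{n+1}(t)\geq \tfrac12\,e^{F^n(t)},\qquad n\geq N.
\]
Iterating this bound, $F^{N+j}(t)$ is bounded below by a tower of exponentials of height $j$ applied to $F^N(t)$.

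Finally, to extract the claim, fix any $K>k$. Tower growth beats $e^{n^K}$, so there is some $n_0\geq N$ with $F^{n_0}(t)\geq e^{n_0^K}$. A one-line induction using
\[
F^{n+1}(t)\geq \tfrac12 e^{F^n(t)}\geq \tfrac12\, e^{e^{n^K}}\geq e^{(n+1)^K}
\]
(valid as soon as $e^{n^K}-\log 2\geq (n+1)^K$, i.e.\ for $n$ large) then extends the bound to every $n\geq n_0$. The desired conclusion $F^n(t)/e^{n^k}\geq e^{n^K-n^k}\to\infty$ follows immediately.

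There is really no serious obstacle: the whole argument is elementary, the only care required being the two routine asymptotic comparisons above (one in the amplification step, one in the induction step).
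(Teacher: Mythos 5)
Your proof is correct. The paper states this lemma without proof (describing it as ``elementary''), so there is no official argument to compare against; your route — escape to infinity, then the amplification $F^{n+1}(t)\geq\tfrac12 e^{F^n(t)}$ for $n$ large, then an induction establishing $F^n(t)\geq e^{n^K}$ for a chosen $K>k$ — is the natural one. The one spot you leave informal is the base case of the induction, ``tower growth beats $e^{n^K}$''; this is indeed routine (for instance, once $F^n(t)\geq 2$ one has $F^{n+1}(t)\geq\tfrac12 e^{F^n(t)}\geq e^{F^n(t)-1}$, so the iterates dominate a tower of height $j$ shifted down by $O(j)$, which eventually exceeds any fixed single exponential $e^{n^K}$), but it would merit a line in a formal write-up.
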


The next definition relates two last notions.

\begin{defn}[Exponentially bounded external address, $t_{\underline{s}}$]
	\label{dfn:exp_bdd_address}
	We say that the external address $\underline{s}=(s_0 s_1 s_2 ... )$ is \emph{exponentially bounded} if there exists $t>0$ such that $s_n/{F^n(t)}\to 0$ as $n\to\infty$. The infimum of such $t$ we denote by $t_{\underline{s}}$.	
\end{defn}

Note that if $t>t_{\underline{s}}$, then $s_n/{F^n(t)}^{1/k}\to 0$ where $k$ is any positive integer constant.

Next statement claims that no other type of external addresses can appear in $I(f)$.

\begin{lmm}
	\label{lmm:only_exp_bdd}
	Let $f\in\nd$. If $z\in I(f)$ escapes either on rays or endpoints and has the external address $\underline{s}=(s_0 s_1 s_2 ... )$, then $\underline{s}$ is exponentially bounded.	
\end{lmm}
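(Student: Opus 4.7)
The plan is to bound $|s_n|$ linearly in $|f^n(z)|$ using the tract geometry of Lemma~\ref{lmm:tracts}, then bound $|f^n(z)|$ itself by iterates of $F$ using that $f=p\circ\exp$ with $p$ monic of degree $d$, and finally combine these with the super-exponential growth of $F$. Before starting I would reduce to the case where $z$ genuinely lies on a dynamic ray $\gamma_0$ rather than being merely its endpoint: in the endpoint case any $w=\gamma_0(t)$ with small $t>0$ lies in $I(f)$, has $f^n(w)\in\gamma_n$, and hence shares the external address $\underline{s}$, so it suffices to prove exponential boundedness for such a $w$.

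Next, since $z$ escapes we have $\Re f^n(z)\to+\infty$ by the escape-to-the-right lemma. For $n$ large, $f^n(z)$ lies on $\gamma_n$ at very large real part, hence falls inside the tract $T_{s_n}$. Lemma~\ref{lmm:tracts}(1) then forces $|\Im f^n(z)-2\pi s_n/d|\leq\pi/(2d)+\epsilon$, which gives the linear bound $|s_n|\leq (d/2\pi)\,|f^n(z)|+O(1)$ for $n$ large.

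For the iterative bound on $|f^n(z)|$, since $p$ is monic of degree $d$ we have $|p(w)|\leq 2|w|^d$ for $|w|$ large, so $|f^{n+1}(z)|\leq 2e^{d\Re f^n(z)}\leq 2(F(|f^n(z)|)+1)$. Using the identity $F(u+c)=e^{dc}F(u)+(e^{dc}-1)$, I would pick $c>0$ with $e^{dc}>2$; then for $|f^n(z)|$ sufficiently large the shift $u\mapsto u+c$ absorbs the constant $2$, giving $|f^{n+1}(z)|+c\leq F(|f^n(z)|+c)$. Iterating from some index $N$ past which all asymptotic estimates hold yields $|f^n(z)|\leq F^{n-N}(t)$ for $n\geq N$, where $t:=|f^N(z)|+c$.

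Combining the two estimates, $|s_n|\leq C\cdot F^{n-N}(t)+C'$ for $n\geq N$. Writing $F^n(t)=F^N(F^{n-N}(t))$ and using that $F^N(u)/u\to\infty$ as $u\to\infty$ (super-exponential growth of iterates of $F$) one obtains $F^{n-N}(t)/F^n(t)\to 0$, hence $s_n/F^n(t)\to 0$, so $\underline{s}$ is exponentially bounded. The main obstacle is the iterative step: absorbing the multiplicative factor $2$ from $|p(w)|\leq 2|w|^d$ into a shift of the argument is possible precisely because $F$ dwarfs any fixed multiplicative factor, but one must check that a single shift survives all iterations, which works thanks to the super-exponential nature of $F$.
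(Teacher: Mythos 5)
Your proposal is correct and follows essentially the same strategy as the paper: bound $|f^n(z)|$ iteratively by $F^n$ of a slightly shifted starting value (the paper uses the ``$+1$'' shift, writing $|f(z)|+1 < F(|z|+1)$ for $\Re z$ large and iterating, while you absorb the monic-polynomial factor via a shift $+c$ with $e^{dc}>2$), then relate $|s_n|$ to $|\Im f^n(z)|$ through the tract geometry and conclude $s_n/F^n(t)\to 0$ by comparing $F^{n-N}(t)$ with $F^n(t)$, exactly as the paper compares $F^n(|z|+1)$ with $F^n(t)$ for $t>|z|+1$. Your explicit reduction of the endpoint case to the ray case is a small addition the paper leaves implicit, but it does not change the argument.
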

\begin{proof}
	If $\Re z$ is big enough, then 
	$$\abs{f(z)}+1=\abs{(1+o(1))\exp(dz)}<$$
	$$\exp(\Re d(z+1/2))<F(\Re z+1)\leq F(\abs{z}+1).$$
	
	Hence $$\abs{\Im f^n(z)}+1\leq\abs{f^n(z)}+1<F^n(\abs{z}+1).$$
	
	So $\Im f^n(z)/F^n(t)\to 0$ for any $t>\abs{z}+1$ since $F^n(t_1)/F^n(t_2)\to 0$ as $n\to\infty$ for $t_2>t_1$.
\end{proof}

Now we prove the key theorem of this section which generalizes the results of \cite{SZ-Escaping} beyond $\mathcal{N}_1$.
\begin{thm}[Escape on rays in $\nd$]
	\label{thm:as_formula}
	Let $f\in\mathcal{N}_d$. Then for every exponentially bounded external address there exists a dynamic ray realizing it.
	
	If $\mathcal{R}_{\underline{s}}$ is a dynamic ray having an exponentially bounded external address $\underline{s}=(s_0 s_1 s_2 ... )$ and no strict forward iterate of $\mathcal{R}_{\underline{s}}$ contains a singular value of $f$, then $\mathcal{R}_{\underline{s}}$ is the unique dynamic ray having external address $\underline{s}$ and it can be parametrized by $t\in(t_{\underline{s}},\infty)$ so that 
	\begin{equation}
		\label{eqn:as_formula}
		\mathcal{R}_{\underline{s}}(t)=t+\frac{2\pi i s_0}{d} + O(e^{-t/2}),
	\end{equation}
	and
	$$f^n\circ \mathcal{R}_{\underline{s}}=\mathcal{R}_{\sigma^n \underline{s}}\circ F^n.$$
	
	The asymptotic bounds $O(\cdot)=O_{t,n}(\cdot)$ for $\mathcal{R}_{\sigma^n\underline{s}}(t)$ are uniform in $n\geq 0$ if we restrict the parametrization to a ray tail contained in $\mathcal{R}_{\underline{s}}$ and its images under $f^n$.
	
	If none of the singular values of $f$ escapes, then $I(f)$ is the disjoint union of dynamic rays and their escaping endpoints.
\end{thm}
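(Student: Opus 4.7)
The plan is to follow the pullback strategy of Schleicher--Zimmer from the exponential case, now adapted to $\nd$ via the tract structure (Lemma~\ref{lmm:tracts}) and the exponential separation estimate (Lemma~\ref{lmm:separation}). For existence, I fix an exponentially bounded $\underline{s}=(s_0 s_1 s_2 \ldots)$ and $t>t_{\underline{s}}$, and set $w_n(t):=F^n(t)+2\pi i s_n/d$ for each $n\geq 0$. By Definition~\ref{dfn:exp_bdd_address} combined with the super-exponential growth of $F^n$, the point $w_n(t)$ lies deep inside $T_{s_n}$ for all sufficiently large $n$. Since $f|_{T_{s_k}}\colon T_{s_k}\to\mathbb{H}_r$ is a conformal isomorphism, I pull $w_n(t)$ back through $T_{s_{n-1}},\ldots,T_{s_0}$ to obtain a point $g_n(t)\in T_{s_0}$ with $f^n(g_n(t))=w_n(t)$ and $f^k(g_n(t))\in T_{s_k}$ for each $k<n$.

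The key step is showing that $g_n(t)$ is Cauchy in $n$: both $f^n(g_n(t))$ and $f^n(g_{n+1}(t))$ lie in $T_{s_n}$ with real parts differing only by a constant compared to $F^n(t)$, and iterated application of the contrapositive of Lemma~\ref{lmm:separation} through the $n$ pullback steps composes $n$ logarithms against the super-exponential growth of $F^n(t)$, producing a summable tail. Setting $\mathcal{R}_{\underline{s}}(t):=\lim_n g_n(t)$, the functional equation $f(g_n(t))=g_{n-1}(F(t))$ passes to the limit to yield $f\circ\mathcal{R}_{\underline{s}}=\mathcal{R}_{\sigma\underline{s}}\circ F$, and uniform escape along ray tails is inherited from the growth of $F^n(t)$. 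The asymptotic expansion $\mathcal{R}_{\underline{s}}(t)=t+2\pi i s_0/d+O(e^{-t/2})$ is extracted from a single pullback: inverting $p\circ\exp$ on $T_{s_0}$ for a monic $p$ of degree $d$, using $\exp(dt)=F(t)+1$, gives $g_1(t)=t+2\pi i s_0/d+O(e^{-t})$, and the higher-order corrections are absorbed by the Cauchy estimate. Uniformity of the remainder in $n$ on a ray tail of $\mathcal{R}_{\underline{s}}$ is then automatic, since the same estimate applies to each $\mathcal{R}_{\sigma^n\underline{s}}$.

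For uniqueness, if $\gamma$ is a second dynamic ray with address $\underline{s}$, the hypothesis that no forward iterate of $\mathcal{R}_{\underline{s}}$ contains a singular value forces $\gamma$ to pull back through the same tract chain; applying Lemma~\ref{lmm:separation} to $|f^n(\gamma(\tau))-f^n(\mathcal{R}_{\underline{s}}(t))|$ with matched potentials then collapses the difference to zero. Disjointness of rays with distinct external addresses is automatic from their eventually differing tract labels. For the last assertion, when no singular value of $f$ escapes, Lemma~\ref{lmm:only_exp_bdd} equips every $z\in I(f)$ with an exponentially bounded external address, and the above uniqueness clause applies to every such address, so $I(f)$ decomposes as the disjoint union of dynamic rays and their escaping endpoints. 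The main obstacle I anticipate is the Cauchy estimate with the correct $O(e^{-t/2})$ rate: the logarithmic contraction provided by Lemma~\ref{lmm:separation} is rather weak, so a careful balance between the growth of $F^n(t)$, the exponential-boundedness rate of $\underline{s}$, and the geometric width of each tract is needed to guarantee both convergence and that all intermediate pullbacks remain strictly inside their respective tracts.
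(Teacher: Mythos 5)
Your overall skeleton matches the paper's: pull back the anchor points $w_n(t)=F^n(t)+2\pi i s_n/d$ through the tract chain, establish a Cauchy estimate, read off the asymptotics from one pullback step, and prove uniqueness. But the mechanism you propose for the central Cauchy estimate is wrong, and you have correctly sensed this yourself in your last paragraph. You intend to run the Cauchy estimate through ``iterated application of the contrapositive of Lemma~\ref{lmm:separation}''. That lemma is an \emph{expansion} estimate requiring $|w-z|\geq 2$; its contrapositive can only force a difference down below $2$, after which the lemma no longer applies and gives you nothing on further pullback. You cannot compose it $n$ times to get a summable tail. The paper uses a much simpler and stronger tool: after increasing $r$, one arranges $|f'|\geq 2$ on $f^{-1}(\mathbb{H}_r)$, so each inverse branch $L_{s_i}$ is a genuine $\tfrac12$-Lipschitz contraction. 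The initial discrepancy at level $n$, namely $\delta_n(u)=L_{s_n}(F(u)+2\pi i s_{n+1}/d)-(u+2\pi i s_n/d)=O(e^{-u/2})$, is then transported back through the $L_{s_i}$'s without growth, and super-exponential growth of $F^{n}(t)$ makes the tail summable. Replacing your ``separation-lemma contraction'' with this derivative bound is not cosmetic --- it is what makes the argument close.

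For the same reason, your uniqueness argument is misattributed. Lemma~\ref{lmm:separation} cannot ``collapse the difference to zero''; it pushes differences \emph{apart}. The paper proves uniqueness of the ray in two stages: (i) any two dynamic rays with the same exponentially bounded address share a tail near $\infty$ by the $f^{-1}$-contraction on $\mathbb{H}_r$, and, absent singular values on forward iterates, cannot branch, hence coincide; (ii) the parametrization over $(t_{\underline s},\infty)$ exhausts the whole ray, and \emph{here} Lemma~\ref{lmm:separation} is used in its correct (expansive) direction: if two points on the same ray stayed at distance $\geq 2$, the lemma forces one of them to escape at rate at least $F^n(t_{\underline s}+\epsilon)$, which places it in the parametrized part. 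You have conflated (i) and (ii) and assigned the wrong lemma to each.

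Finally, your closing step has a gap: Lemma~\ref{lmm:only_exp_bdd} is stated only for points that already escape on rays or on endpoints, so it does not by itself tell you that an arbitrary $z\in I(f)$ is of that kind. The paper closes this by invoking \cite[Theorem~4.7]{RRRS}, which guarantees that every escaping point lands after finitely many iterations on a dynamic ray or an endpoint, and then pulls rays back (which is possible precisely because no singular value escapes). Without that input, your decomposition of $I(f)$ is not justified.
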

\begin{proof}
	Fix some $r>0$ big enough so that $\abs{f'(z)}\geq 2$ on $f^{-1}(\mathbb{H}_r)$, \, $f^{-1}(\mathbb{H}_r)$ is a disjoint union of tracts $T_n, n\in\mathbb{Z}$, each $f|_{T_n}$ is a conformal homeomorphism, and let $L_n:\mathbb{H}_r\to T_n$ be its inverse.
	
	Further, let $\underline{s}=(s_0 s_1 s_2 ... )$ be an exponentially bounded external address. For $n\geq 1$ and $t\geq t_{\underline{s}}$ consider the functions	
	
	$$g_n(t):=L_{s_{0}}\circ L_{s_{1}}\circ ... \circ L_{s_{n-1}}\left(F^n (t)+\frac{2\pi i s_{n}}{d}\right),$$
	whenever they are defined.
	
	Fix some $t>t_{\underline{s}}$. For every integer $k\geq 0$ consider $$\delta_k(u):=L_{s_k}(F(u)+2\pi is_{k+1}/d)-(u+2\pi is_k/d)$$
	with $u\geq F^k(t)$. Since $\delta_k(u)$ is bounded, we have
	
	$$f\left(u+\frac{2\pi is_k}{d}+\delta_k(u)\right)=F(u)+\frac{2\pi is_{k+1}}{d}\iff$$
	$$(1+O(e^{-u})) e^{ d(u+2\pi is_k/d+\delta_k(u))}=(1+O(e^{-u/2}))e^{du}\iff$$
	$$\delta_k(u)=O(e^{-u/2}).$$
	
	The estimate above makes sense for all $u\geq F^k(t)$ whenever the index $k$ is bigger than some integer $N\geq 0$. Note that the estimate $O(\cdot)$ is uniform in $k$.
	
	Further, if $n\geq N$,
	
	$$f^N\circ g_{n+1}\circ F^{-N}(u)-f^N\circ g_n\circ F^{-N}(u)=$$
	$$L_{s_{N}}\circ ... \circ L_{s_{n}}\left(F^{n+1-N} (u)+\frac{2\pi i s_{n+1}}{d}\right)-L_{s_{N}}\circ ... \circ L_{s_{n-1}}\left(F^{n-N} (u)+\frac{2\pi i s_{n}}{d}\right)=$$
	$$L_{s_{N}}\circ ... \circ L_{s_{n-1}}\left(F^{n-N} (u)+\frac{2\pi i s_{n}}{d}+\delta_n\left(F^{n-N}(u)\right)\right)-L_{s_{N}}\circ ... \circ L_{s_{n-1}}\left(F^{n-N} (u)+\frac{2\pi i s_{n}}{d}\right).$$
	
	Hence 
	$$\left| f^N\circ g_{n+1}\circ F^{-N}(u)-f^N\circ g_n\circ F^{-N}(u)\right|<C \exp\left(-F^{n-N}(u)/2\right)$$
	since $L_{s_i}$ is contracting ($\abs{f'(z)}>2$ on tracts $T_n$).
	
	By Cauchy's criterion we see that for all $N$ big enough $f^N\circ g_n\circ F^{-N}$ converges to a continuous function $\mathcal{R}_{\sigma^N\underline{s}}(u)$ uniformly on $[F^N(t),\infty]$, and, moreover,
	$$\mathcal{R}_{\sigma^N\underline{s}}(u)=u+2\pi i s_N/d + O(e^{-u/2}).$$
	This is a ray tail having external address $\sigma^N\underline{s}$. After taking $N$ preimages of it under $f$ (and possibly restricting it to a bigger $t$ to avoid issues of the singular values), we obtain a ray tail having external address $\underline{s}$. It is contained in a dynamic ray, so every exponentially bounded external address is realized via some dynamic ray $\mathcal{R}_{\underline{s}}$.
	
	If none of the strict forward iterates of $\mathcal{R}_{\underline{s}}$ contains a singular value, then, by choosing $t$ arbitrarily close to $t_{\underline{s}}$, the function can be extended to $(t_{\underline{s}},\infty)$ (and even to $[t_{\underline{s}},\infty)$ if $s_n/{F^n(t_{\underline{s}})}\to 0$). 
	
	The formula~\ref{eqn:as_formula} for $\mathcal{R}_{\underline{s}}$ follows from the computations above (we can just consider big values of $u$). From the definition of $g_n$, we obtain the equality $f^n\circ \mathcal{R}_{\underline{s}}=\mathcal{R}_{\sigma^n \underline{s}}\circ F$. Injectivity of $\mathcal{R}_{\underline{s}}(t)$ follows from the asymptotic formula~\ref{eqn:as_formula}: points corresponding to different parameters $t$ have distance from each other bigger than one after iterating $f$ some finite number of times. 
	
	Uniqueness of the ray having an external address $\underline{s}$ follows from strict contraction of $f^{-1}$ on $\mathbb{H}_r$: any two such rays would have had a common part near $\infty$. Since they do not contain a critical value, they cannot ``branch'', so must coincide. 
	
	Now our goal is to show that the parametrization on $(t_{\underline{s}},\infty)$ is the parametrization of the whole dynamic ray.  
	
	It is enough to prove that whenever two points are on the same ray $\mathcal{R}_{\underline{s}}$, then the real part of one of them grows faster than $F^n(t)$ for some $t>t_{\underline{s}}$. Indeed, because of the strict contraction of $f^{-1}$ on $\mathbb{H}_r$, by the standard argument it would follow that this point belongs to the parametrized part of $\mathcal{R}_{\underline{s}}$.
	
	Recall that in Lemma~\ref{lmm:only_exp_bdd} we proved that if $z\in I(f)$ and all iterates lie outside of some bounded set, then $\abs{\Im f^n(z)}+1<F^n(\abs{z}+1)$. But this means that if $z\in\mathcal{R}_{\underline{s}}$, then $\abs{z}+1\geq t_{\underline{s}}$. Analogously, for every $n\geq 0$,
	$$\abs{f^n(z)}+1\geq t_{\sigma^n\underline{s}}=F^n(t_{\underline{s}}).$$
	Taking logarithm of both sides of the inequality for $n=k+1$ and assuming $t_{\underline{s}}>0$, we obtain that for every $k\geq 0$
	$$\Re f^k(z)\geq F^k(t_{\underline{s}})+o(1)$$
	where $o(1)\to 0$ as $k\to\infty$.	
	
	Now, pick $z,w\in \mathcal{R}_{\underline{s}}$. Without loss of generality we might assume that they escape inside of the same sequence of tracts (otherwise switch to their iterates). The distance between them along the orbits has to be unbounded (otherwise they coincide by the usual contraction argument). But then without loss of generality we may assume that $
	\abs{f^n(w)-f^n(z)}>2$ for $n\geq 0$.
	
	From Lemma~\ref{lmm:separation} we have 	
	$$\abs{f^{n+1}(w)-f^{n+1}(z)}\geq \exp\left(\frac{\abs{f^n(w)-f^n(z)}}{8\pi}\right)\left(\min\{\Re f^{n+1}(w),\Re f^{n+1}(z)\}-r\right).$$
	
	Consider first the case  $t_{\underline{s}}>0$. From the two inequalities above,
	$$\abs{f^{n+1}(w)-f^{n+1}(z)}\geq \exp\left(\frac{\abs{f^n(w)-f^n(z)}}{8\pi}\right)\left(F^{n+1}(t_{\underline{s}}) +o(1)-r\right).$$
	It follows, that the difference $\varDelta_n=\abs{f^n(w)-f^n(z)}$ grows faster than $F^n(t_{\underline{s}}+\epsilon)$ as $n\to\infty$ for some $\epsilon>0$. Indeed, first note that the ratio $\varDelta_n/F^n(t_{\underline{s}})$ tends to $\infty$, then use a weaker inequality $\varDelta_{n+1}\geq\exp(\varDelta_n/8\pi)$ to estimate the order of growth.
	
	This implies that the sequence $\{\max(\abs{f^n(z)},\abs{f^n(w)})\}_{n=1}^{\infty}$ escapes to $\infty$ faster than $\{F^n(t_{\underline{s}}+\epsilon)\}$ for some $\epsilon>0$. From the definition of $t_{\underline{s}}$, the same holds for the sequence $\{\max(\Re f^n(z),\Re f^n(w))\}_{n=1}^{\infty}$. Thus, for big enough $n$ at least one point among $f^n(z)$ and $f^n(w)$ has in its $2\pi/d$ neighbourhood a point of the parametrized part of the ray $\mathcal{R}_{\sigma^n\underline{s}}$ having potential bigger than $F^n(t_{\underline{s}}+\epsilon)$. By the usual contraction and pull-back argument this is possible only if at least one of the points $z,w$ lies on the parametrized part of $\mathcal{R}_{\underline{s}}$.
	
	Now, let $t_{\underline{s}}=0$. Then for all $n$ big enough,	
	$$\left|f^{n+1}(w)-f^{n+1}(z)\right|\geq \exp\left(\frac{\abs{f^n(w)-f^n(z)}}{8\pi}\right),$$
	which implies that the sequence $\{\max(\abs{f^n(z)},\abs{f^n(w)})\}_{n=1}^{\infty}$ escapes to $\infty$ faster than $\{F^n(\epsilon)\}$ for some $\epsilon>0$. The rest of the proof is identical to the case $t_{\underline{s}}>0$.
	
	Finally, we have to show that if none of the singular values of $f$ escapes, then $I(f)$ is the union of dynamic rays and their escaping endpoints. This is a well known fact even for more general families of functions \cite{RRRS}. We can argue as follows. From \cite[Theorem~4.7]{RRRS} we know that every escaping point lands after finitely many iterations either on a dynamic ray or on an endpoint. Since rays do not contain singular values, they can be pulled back by $f$ at their full length (together with endpoints when exist), and their preimages are dynamic rays as well. This finishes the proof of the theorem.
\end{proof}

Theorem~\ref{thm:as_formula} allows us to define the notion of potential.

\begin{defn}[Potential]
	Let $f\in \mathcal{N}_d$ and $z$ escapes on rays with external address $\underline{s}$. We say that $t$ is the \emph{potential} of $z$ if for $n\to\infty$ 
	$$\abs{f^n(z)-F^n(t)-2\pi is_n/d}\to 0.$$	
\end{defn}

From Theorem~\ref{eqn:as_formula} follows that every point escaping either on rays has a potential, and different points on the same ray have different potentials.

Next lemma will help us to deal with some technicalities later on. It says that all far enough iterates of ray tails have a strictly increasing real part. We will need this to prove that these ray tails have the ``trivial'' homotopy type (Proposition~\ref{prp:constant_homotopy_near_infinity}), even after some perturbations of marked points. Note that such a statement was not needed in \cite{IDTT1} for the exponential family because the ``triviality'' of the homotopy types was evident.

\begin{lmm}[Ray tails are monotone near $\infty$]
	\label{lmm:monotonicity_of_rays}
	Let $\mathcal{R}_{\underline{s}}=\mathcal{R}_{\underline{s}}(t)$ be a dynamic ray of $f$ parametrized by potential, and let $t'$ be the potential of a point on the ray $\mathcal{R}_{\underline{s}}$. Then there exists $N>0$ such that for all $n>N$ the real part of $f^n(\mathcal{R}_{\underline{s}}|_{[t',\infty]})$ is strictly increasing.
\end{lmm}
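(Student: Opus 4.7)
The plan is to use the asymptotic formula of Theorem~\ref{thm:as_formula} to convert strict monotonicity into a derivative estimate on the error term. By the semiconjugacy $f^n\circ\mathcal{R}_{\underline{s}}=\mathcal{R}_{\sigma^n\underline{s}}\circ F^n$ and the fact that $F^n$ is strictly increasing on $[t',\infty)$, it is enough to show that $\Re\mathcal{R}_{\sigma^n\underline{s}}(u)$ is strictly increasing on $[F^n(t'),\infty)$ once $n$ is sufficiently large. Writing
$$\mathcal{R}_{\sigma^n\underline{s}}(u)=u+\frac{2\pi i s_n}{d}+E_n(u),\qquad |E_n(u)|\leq C e^{-u/2}$$
uniformly in $n$, the goal reduces to showing $\tfrac{d}{du}\Re\mathcal{R}_{\sigma^n\underline{s}}(u)=1+\Re E_n'(u)>0$ for $u\geq u^*$, with $u^*$ independent of $n$.

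To bound $E_n'$, I would revisit the construction of $\mathcal{R}_{\sigma^n\underline{s}}$: the approximating maps $g_k$ in the proof of Theorem~\ref{thm:as_formula} are compositions of $F^k$ with the holomorphic inverse branches $L_{s_i}$, so they extend analytically to a complex neighborhood of $\{\Re u\geq u_0\}$ for some $u_0$. Inspection of the derivation of $\delta_k(u)=O(e^{-u/2})$ shows it is a purely algebraic identity and remains valid for complex $u$ with large positive real part; therefore $E_n$ is holomorphic and satisfies $|E_n(u)|\leq C e^{-u/2}$ on such a strip uniformly in $n$. Cauchy's estimate applied on a disc of fixed radius then yields $|E_n'(u)|\leq C'e^{-u/2}$ for real $u\geq u_0+1$.

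Choosing $u^*$ large enough that $C'e^{-u^*/2}<1$, and then $N$ so large that $F^n(t')\geq u^*$ for all $n>N$ (possible since $F^n(t')\to\infty$ super-exponentially), one has $1+\Re E_n'(u)>0$ throughout the image of the ray tail, which proves the lemma. The only substantive step is the holomorphic extension of the error bound in the middle paragraph; this is routine once one verifies that the operations used in the construction of Theorem~\ref{thm:as_formula} are analytic in a complex neighborhood, and the remainder is a direct application of the asymptotic formula together with Cauchy's estimate.
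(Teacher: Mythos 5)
The approach is genuinely different from the paper's, and it has a real gap at the step you label ``routine.'' Your plan is to extend $\mathcal{R}_{\sigma^n\underline{s}}$ holomorphically to a complex strip of \emph{fixed} width around $[F^n(t'),\infty)$, carry over the uniform bound $|E_n(u)|\leq Ce^{-\Re u/2}$, and then apply Cauchy's estimate on discs of fixed radius. The problem is that the approximants
$$f^N\circ g_k\circ F^{-N}(u)=L_{s_N}\circ\cdots\circ L_{s_{k-1}}\bigl(F^{k-N}(u)+\tfrac{2\pi i s_k}{d}\bigr)$$
do \emph{not} extend holomorphically to a strip of width independent of $k$. Writing $u=u_0+iy$ with $u_0$ real, one has $\Im F^{j}(u)\approx y\,(F^{j})'(u_0)$, and for the argument of $L_{s_{k-1}}$ to stay inside $\mathbb{H}_r$ (and for the contraction-by-$1/f'$ heuristic you implicitly invoke to be valid) one needs this imaginary deviation to remain small compared to $\Re F^{j}(u)$, which forces $|y|\lesssim F^{j}(u_0)/(F^{j})'(u_0)\sim 1/(F^{j-1})'(u_0)$. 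This tolerance shrinks super-exponentially with $j=k-N$, so the domains $D_k$ on which $g_k\circ F^{-N}$ is defined collapse onto the real ray as $k\to\infty$, and the uniform limit only a priori lives on $[F^N(t'),\infty)$ itself. In particular, the convergence is not uniform on compacta of any fixed strip, so one cannot conclude holomorphicity of the limit there, let alone a uniform-in-$n$ bound on a strip, and the Cauchy estimate has nothing to be applied to.

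This is precisely the difficulty the paper's proof is designed to avoid. Instead of differentiating a parametrization, the paper constructs explicit shrinking boxes $Q_n\supset f^n(R)$ of width $\sim 2^{-n}$ on which $|\arg f'|<2^{-(n-1)}$, checks the nesting $Q_n^{-1}\subset Q_n$ for pullbacks, takes a straight segment $\gamma_m$ between two ray points in $Q_m$ for $m$ large, and pulls it back by $f^{-(m-N)}$; the cumulative argument bound $\sum_{n\geq N}2^{-(n-1)}+|\arg\gamma_m|<\pi/2$ then forces the pullback to have strictly increasing real part, and its endpoints are the ray points one wants to compare. This needs no complex extension of the parametrization, no Cauchy estimate, and not even differentiability of $u\mapsto\mathcal{R}_{\sigma^N\underline{s}}(u)$. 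If you want to salvage your argument, you would need either a direct $C^1$ asymptotic $\mathcal{R}_{\sigma^n\underline{s}}'(u)=1+O(e^{-u/2})$ obtained from the construction itself (essentially re-deriving what the paper's $\arg f'$ bookkeeping gives), or a careful proof that the limit is holomorphic on a strip whose width is allowed to depend on $\Re u$ together with a Cauchy estimate on discs of variable radius — both of which are substantially more work than ``routine.''
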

\begin{proof}
	Recall that according to our earlier agreement $f$ maps each tract $T_n$ conformally onto $\mathbb{H}_r$ for some $r\in\mathbb{R}$.
	
	Let $R=\mathcal{R}_{\underline{s}}([t',+\infty])$ and $s=(s_0 s_1 s_2...)$. For every integer $n\geq 0$ define 
	$$Q_n:=\left[F^n(t')-\frac{1}{2},+\infty\right)\times\left[\frac{2\pi i s_n}{d} -\frac{1}{2^n}, \frac{2\pi i s_n}{d}+\frac{1}{2^n}\right].$$
	
	First, we prove that for all $n$ big enough holds:
	\begin{enumerate}
		\item $f^n(R)\subset Q_n\subset T_{s_n}$,
		\item if $z\in Q_n$, then 
		$$\abs{\arg f'(z)}<\frac{1}{2^n},$$
		where we consider the branch of $\arg$ with range in $(-\pi,\pi]$, 
		\item if $Q_n^{-1}$ is the connected component of $f^{-1}(Q_{n+1})$ containing $f^n(t')$, then $$Q_n^{-1}\subset Q_n.$$
	\end{enumerate} 
	
	\textbf{(1)} The first and the second inclusions follow immediately from the asymptotic formula~\ref{eqn:as_formula} and  Lemma~\ref{lmm:tracts}, respectively.
	
	\textbf{(2)} Note that if $z=x+iy\in Q_n$, then
	$$f'(z)=(p\circ\exp)'(z)=p'(e^z)e^z=$$
	$$d e^{dz}\left(1+O(e^{-z})\right)=d e^{dx} e^{idy}\left(1+O(e^{-x})\right)=$$
	$$d e^{dx} e^{idy}\left(1+O(e^{-F^n(t')})\right),$$
	where $O(\cdot)$ is uniform in $n$.
	
	Then for big $n$ due to the asymptotic formula~\ref{eqn:as_formula}
	$$\abs{\arg f'(z)}=\abs{\arg\left(1+O(e^{-F^n(t')/2})\right)+\arg \left(1+O(e^{-F^n(t')})\right)}<\frac{1}{2^n}.$$
	\textbf{(3)} Consider only $n$ big enough so that:
	\begin{itemize}
		\item $Q_n\subset\mathbb{H}_r$,
		\item for every $z\in f^n(R)$ we have 
		$$\mathbb{D}_{\frac{4}{\abs{f'(z)}}}(z)\subset Q_n.$$		
	\end{itemize}
	
	The second condition is possible due to the asymptotic formula~\ref{eqn:as_formula} and the fact that for some constant $C>0$ holds
	$$\frac{4}{\abs{f'(z)}}<\frac{C}{\exp(F^n(t'))}.$$
	
	If $n$ is big enough, then from the asymptotic formula~\ref{eqn:as_formula} follows that for every $w\in Q_{n+1}$ there exists a point $a\in f^n(R)\subset Q_n$ such that $$\abs{w-f(a)}<1.$$
	Next, since $f$ is univalent on every tract, from Koebe $1/4$ theorem follows that
	$$f(\mathbb{D}_{\frac{4}{\abs{f'(a)}}}(a))\supset\mathbb{D}_1(f(a))\ni w.$$
	Hence $Q_n$ contains a point $z$ such that $f(z)=w$.
	
	We have proven the conditions \textbf{(1)-(3)} and are ready to finish the proof of the lemma.
	
	Let $N>1$ be such that for $n\geq N$ conditions \textbf{(1)-(3)} are satisfied. Take $t_1,t_2$ such that $t_2>t_1\geq t'$, and for $n>N$ let
	$\gamma_n:[0,1]\to\mathbb{C}$ be the straight line segment joining $\mathcal{R}_{\sigma^n\underline{s}}(t_1)$ to $\mathcal{R}_{\sigma^n\underline{s}}(t_2)$ so that
	$$\gamma_n(u)=\mathcal{R}_{\sigma^n\underline{s}}(t_1)+u(\mathcal{R}_{\sigma^n\underline{s}}(t_2)-\mathcal{R}_{\sigma^n\underline{s}}(t_1)).$$
	The segment $\gamma_n$ is evidently contained in $Q_n$. 
	
	It follows from the asymptotic formula~\ref{eqn:as_formula} that we can find $m\geq N$ such that
	$$\abs{\arg \gamma_m}<\frac{1}{2},$$
	in particular, $\Re \mathcal{R}_{\sigma^m\underline{s}}(t_2)>\Re \mathcal{R}_{\sigma^m\underline{s}}(t_1)$.
	
	Consider the smooth curve $\tilde{\gamma}(u):=f^{N-m}\circ\gamma_m(u)$ joining $\mathcal{R}_{\sigma^N\underline{s}}(t_2)$ and $\mathcal{R}_{\sigma^N\underline{s}}(t_1)$. Because of \textbf{(2)}, for every $u\in[0,1]$ we have
	$$\arg\tilde{\gamma}'(u)<\frac{1}{2^{N}}+\frac{1}{2^{N+1}}+...+\frac{1}{2^{m-1}}+\abs{\arg\gamma_m}<\frac{1}{2}+\frac{1}{2}=1<\frac{\pi}{2}.$$
	Hence $\Re\tilde{\gamma}$ is strictly increasing, and $\Re \mathcal{R}_{\sigma^N\underline{s}}(t_2)>\Re \mathcal{R}_{\sigma^N\underline{s}}(t_1)$.
	The same computations work for every $n>N$ instead of $N$, i.e.\ $\Re \mathcal{R}_{\sigma^n\underline{s}}(t_2)>\Re \mathcal{R}_{\sigma^n\underline{s}}(t_1)$.	
\end{proof}

\section{Setup of the Thurston iteration and contraction}

\label{sec:setup_and_contraction}

In this section we define Thurston's $\sigma$-map and discuss its properties, in particular, that it is (uniformly) strictly contracting on the set of the asymptotically conformal points in the \tei\ space. The definitions are analogous to those in \cite{IDTT1} for the exponential family, so we provide only a brief exposition. See \cite{IDTT1} for a more detailed discussion.

We show how to construct a quasiregular function $f$ modeling some escaping behavior of singular values. Let $f_0\in\mathcal{N}_d$, and $v_1,...,v_m$ be the finite singular values of $f_0$. Further, let $\mathcal{O}_1=\{a_{1j}\}_{j=0}^\infty,...,\mathcal{O}_m=\{a_{mj}\}_{j=0}^\infty$ be some orbits of $f_0$ escaping on disjoint rays $\mathcal{R}_{ij}$. Denote by $R_{ij}$ the part of the ray $\mathcal{R}_{ij}$ from $a_{ij}$ to $\infty$ (including $a_{ij}$ itself) and assume that the singular values $v_1,...,v_m$ do not belong to any of the $R_{ij}$'s.

Now, we describe the construction of a \emph{capture map}. It is a carefully chosen \qc\ homeomorphism of $\mathbb{C}$ that is equal to the identity outside of a bounded set and maps singular values of $f_0$ to the points $\{a_{i0}\}$. 

More precisely, choose some bounded Jordan domain $U\subset\mathbb{C}\setminus\bigcup_{\substack{i=\overline{1,m}\\j=\overline{1,\infty}}} R_{ij}$ containing all singular values of $f_0$ and the first point $a_{i0}$ on each orbit $\mathcal{O}_i$. Define an isotopy $c_u:\mathbb{C}\to\mathbb{C}, u\in[0,1]$ through \qc\ maps, such that $c_0=\id$, $c_u=\id$ on $\mathbb{C}\setminus U$, and for each $i=\overline{1,m}$ we have $c_1(v_i)=a_{i0}$. Denote $c=c_1$. Thus, the capture map $c$ is a \qc\ homeomorphism mapping singular values to the first points on the orbits $\mathcal{O}_i$ and ``not spoiling'' the dynamics on the chosen orbits $\mathcal{O}_i$.

\begin{remark}
	Note that the choice of the capture is not unique, so we just pick one of them. It will be shown in the proof of Classification Theorem~\ref{thm:main_thm} that our results do not depend on a particular choice of the capture.
\end{remark}

Define a function $f:=c\circ f_0$. It is a quasiregular function whose singular orbits coincide with $\{\mathcal{O}_i\}_{i=1}^m$. We use the standard notation for the post-singular set of $f$: $$P_f:=\cup_{i=1}^m\mathcal{O}_i.$$ It is also common to call $P_f$ the set of \emph{marked points}.

Thurston's $\sigma$-map is supposed to act on $\mathcal{T}_f$, the \tei\ space of $\mathbb{C}\setminus P_f$.

\begin{defn}[\tei\ space of $\mathbb{C}\setminus P_f$]
	\label{defn:tei_space}
	The \emph{\tei\ space} $\mathcal{T}_f$ of the Riemann surface $\mathbb{C}\setminus P_f$ is the set of quasiconformal homeomorphisms of $\mathbb{C}\setminus P_f$ modulo post-composition with an affine map and isotopy relative $P_f$.
\end{defn}

The map
$$\sigma:[\varphi]\in\mathcal{T}_f\mapsto[\tilde{\varphi}]\in\mathcal{T}_f$$
is defined in the standard way via Thurston's diagram.

\begin{center}
	\begin{tikzcd}
		\mathbb{C},P_f \arrow[r, "{\tilde{\varphi}}"] \arrow[d, "f=c\circ f_0"]	& \mathbb{C},\tilde{\varphi}(P_f) \arrow[d, "g"] \\
		\mathbb{C},P_f \arrow[r, "{\varphi}"] & \mathbb{C},\varphi(P_f)
	\end{tikzcd}
\end{center}
\vspace{0.5cm}

More precisely, let $[\varphi]$ be a point in $\mathcal{T}_f$ where $\varphi:\mathbb{C}\to\mathbb{C}$ is quasiconformal. Then due to the Measurable Riemann Mapping Theorem there exist a unique (up to the postcomposition with an affine map) \qc\ map $\tilde{\varphi}:\mathbb{C}\to\mathbb{C}$ such that $g=\varphi\circ f\circ\tilde{\varphi}^{-1}$ is an entire function. We define $\sigma[\varphi]:=[\tilde{\varphi}]$. So $\sigma$ is a continuous map acting on $\mathcal{T}_f$. For more details and the proof that this setup is well defined we refer the reader to \cite{IDTT1}. For the classical setup we suggest \cite{DH,HubbardBook2}. 

As was mentioned in the Introduction, we are interested in fixed points of $\sigma$ with the expectations that the entire map $g$ on the right hand side of Thurston's diagram would be the entire function that we need in the Classification Theorem~\ref{thm:main_thm}.

As for the exponential case in \cite{IDTT1}, we will need the fact that the $\sigma$-map is strictly contracting on the subspace of asymptotically conformal points of $\mathcal{T}_f$.

\begin{defn}[Asymptotically conformal points \cite{Gardiner}]
	\label{defn:as_conformal}
	A point $[\varphi]\in\mathcal{T}_f$ is called \emph{asymptotically conformal} if for every $\epsilon>0$ there is a compact set $C\subset\mathbb{C}\setminus P_f$ and a representative $\psi\in[\varphi]$ such that $\abs{\mu_\psi}<\epsilon$ a.e.\ on $(\mathbb{C}\setminus P_f)\setminus C$.
\end{defn}

Next theorem is an immediate corollary from \cite[Lemma~4.1]{IDTT1} and \cite[Lemma~4.3]{IDTT1}.

\begin{thm}[$\sigma$ is strictly contracting on as.\ conf.\ subset]
	\label{thm:sigma_strictly_contracting}
	Let $f=c\circ f_0$ be the quasiregular function defined earlier in this section. Then the associated $\sigma$-map is invariant and strictly contracting on the subset of the asymptotically conformal points in $\mathcal{T}_f$.
\end{thm}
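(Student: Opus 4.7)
The theorem is announced as an immediate corollary of two lemmas from \cite{IDTT1}, so my plan is to verify that the quasiregular model $f=c\circ f_0$ constructed in this section satisfies the hypotheses of both cited results, and then to invoke them directly. Two features of $f$ are crucial: it is quasiregular on all of $\mathbb{C}$, and it coincides with the holomorphic map $f_0\in\mathcal{N}_d$ outside the bounded Jordan domain $U$ used to build the capture $c$. Since $f_0$ is of bounded type with finitely many singular values and its tract structure (Lemma~\ref{lmm:tracts}) is qualitatively the same as for the exponential family, the setting is formally analogous to that of \cite{IDTT1}, with $d$ merely changing some constants.

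For the invariance part, coming from \cite[Lemma~4.1]{IDTT1}, the plan is to use the pullback formula for Beltrami coefficients along the Thurston diagram. Given asymptotically conformal $[\varphi]$ and a representative $\psi\in[\varphi]$ with $\abs{\mu_\psi}<\epsilon$ outside some compact $C\subset\mathbb{C}\setminus P_f$, the lift $\tilde\psi$ in $\sigma[\varphi]$ satisfies $\mu_{\tilde\psi}=f^*\mu_\psi$ on $\mathbb{C}\setminus U$ where $f$ is holomorphic, so the pointwise dilatation does not grow there. The slightly delicate step is to find a single compact $C'\subset\mathbb{C}\setminus P_f$ outside which $\abs{\mu_{\tilde\psi}}<\epsilon$: one sets $C'=U\cup(f^{-1}(C)\cap\overline{\mathbb{D}_R(0)})$ for large $R$, using that $P_f$ consists of a finite union of escaping orbits so that only finitely many marked points lie in $\overline{\mathbb{D}_R(0)}$, and that outside $\mathbb{D}_R(0)$ the marked points travel along tracts where $f=f_0$ is conformal.

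For the strict contraction part, coming from \cite[Lemma~4.3]{IDTT1}, I would follow the classical Thurston co-tangent argument. The derivative of $\sigma$ is dual to the pullback operator on integrable meromorphic quadratic differentials $q$ on $\mathbb{C}\setminus P_f$; this operator is always a weak contraction, $\|f^*q\|_{L^1}\le\|q\|_{L^1}$. Strictness on asymptotically conformal points arises from a combination of two facts: first, asymptotic conformality allows approximating the optimal Beltrami coefficient by one of compact support, so the pairing with $q$ can be controlled by the $L^1$-norm of $q$ on a compact set; second, because $f$ is a transcendental entire function with an essential singularity at $\infty$, the pullback $f^*q$ genuinely loses mass across the tracts (any disk near $\infty$ in the target has infinitely many preimages of shrinking modulus), producing a strict inequality in the norm as soon as $q$ is not identically zero.

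The main obstacle I expect is not conceptual but bookkeeping: one must check that the quantitative estimates in \cite[Lemma~4.3]{IDTT1}, developed for $\mathcal{N}_1$, transplant to $\mathcal{N}_d$. The relevant input is the tract geometry of Lemma~\ref{lmm:tracts} together with the exponential separation Lemma~\ref{lmm:separation}, which provide exactly the same kind of control on $f^{-1}$ near $\infty$ as in the exponential case. Given this, no new analytic estimate should be needed, and invariance plus strict contraction follow by the same arguments as in \cite{IDTT1}.
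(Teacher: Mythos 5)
Your proposal matches the paper's approach: the paper's entire proof is the single sentence that the theorem is an immediate corollary of \cite[Lemma~4.1]{IDTT1} (invariance) and \cite[Lemma~4.3]{IDTT1} (strict contraction), and your plan is precisely to invoke those two lemmas after checking that $f=c\circ f_0$ fits their hypotheses. The extra material you provide — the sketch of the Beltrami-pullback argument for invariance and the quadratic-differential cotangent argument for strict contraction — is a reasonable reconstruction of what those lemmas contain but is not part of the paper's proof here.
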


\section{$\Id$-type maps and spiders}
\label{sec:id_type_and_spiders}

In this section we mainly adjust the machinery developed in \cite{IDTT1} for the needs of our current constructions. 

\subsection{$\Id$-type maps}

Let $f=c\circ f_0$ be the quasiregular function defined in Section~\ref{sec:setup_and_contraction} where $f_0=p\circ\exp$. We start with three definitions.

\begin{defn}[Standard spider]
	$S_0=\cup_{i,j} R_{ij}$ is called the \emph{standard spider} of $f$.
\end{defn}

\begin{defn}[$\Id$-type maps]
	\label{defn:id_type}
	A quasiconformal map $\varphi:\mathbb{C}\to \mathbb{C}$ is \idt\ if there is an isotopy $\varphi_u:\mathbb{C}\to\mathbb{C},\ u\in [0,1]$ such that ${\varphi_1=\varphi},\ \varphi_0=\id$ and $\abs{\varphi_u(z)-z}\to 0$ uniformly in $u$ as $S_0\ni z\to \infty$.
\end{defn}

\begin{defn}[$\Id$-type points in $\mathcal{T}_f$]
	We say that $[\varphi]\in\mathcal{T}_f$ is \idt\ if $[\varphi]$ contains an $\id$-type map.	
\end{defn}

As for the exponential case \cite{IDTT1}, one can replace $S_0$ in Definition~\ref{defn:id_type} by solely the endpoints of $S_0$, i.e.\ $P_f$. It is easy to show that this would define the same subset \idt\ points in $\mathcal{T}_f$. However, Definition~\ref{defn:id_type} is slightly more convenient for our needs.

\begin{defn}[Isotopy \idt\ maps]
	We say that $\psi_u$ is an \emph{isotopy \idt\ maps} if $\psi_u$ is an isotopy through maps \idt\ such that $\abs{\psi_u(z)-z}\to 0$ uniformly in $u$ as $S_0\ni z\to \infty$.
\end{defn}

The identity and $c^{-1}$ are clearly \idt, as well as a composition $\varphi\circ c$ of the capture $c$ with any $\id$-type map.

The following theorem claims that the $\sigma$-map is invariant on the subset of $\id$-type points in $\mathcal{T}_f$ in analogy with \cite[Theorem~5.5]{IDTT1}.

\begin{thm}[Invariance of $\id$-type points]
	\label{thm:pullback_of_id_type}
	If $[\varphi]$ is \idt, then $\sigma[\varphi]$ is \idt\ as well.
	
	More precisely, if $\varphi$ is \idt, then there is a unique $\id$-type map $\hat{\varphi}$ such that $\varphi\circ f\circ\hat{\varphi}^{-1}$ is entire.
	
	Moreover, if $\varphi_u$ is an isotopy of $\id$-type maps, then the functions $\varphi_u\circ f\circ\hat{\varphi}_u^{-1}$ have the form $p_u\circ\exp$ where $p_u$ is a monic polynomial with coefficients depending continuously on $u$.
\end{thm}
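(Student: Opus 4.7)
The strategy parallels the exponential case in \cite{IDTT1}: construct $\hat{\varphi}$ via MRMT, pin down the affine freedom using the $2\pi i$-periodicity of $f$, and verify the $\id$-type asymptotics through the local inverses of $f$ on its tracts at infinity.

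First I would note that the Beltrami coefficient $\mu:=\mu_{\varphi\circ f}$ is well-defined a.e.\ on $\mathbb{C}$ with $\|\mu\|_{\infty}<1$, so MRMT produces a \qc\ map $\hat{\varphi}:\mathbb{C}\to\mathbb{C}$ with $\mu_{\hat{\varphi}}=\mu$, unique up to post-composition with an affine map; then $g:=\varphi\circ f\circ\hat{\varphi}^{-1}$ is $1$-quasiregular and therefore entire. Because $f_0=p\circ\exp$ is $2\pi i$-periodic and $c$ acts only in the codomain, $f$ itself is $2\pi i$-periodic, which makes $\mu$ invariant under $z\mapsto z+2\pi i$. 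By uniqueness one gets $\hat{\varphi}(z+2\pi i)=A(\hat{\varphi}(z))$ for some affine $A$, and I would fix the affine freedom so that $A(w)=w+2\pi i$. This forces $g$ to be $2\pi i$-periodic, hence $g=q\circ\exp$ for some entire $q$; since the singular values of $g$ are precisely the $\varphi(v_i)$, finite in number, $q$ is a polynomial of degree $d$, and the remaining complex-translation freedom is pinned down by demanding that $q$ be monic (matching leading terms as $\Re z\to+\infty$).

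For the $\id$-type property and the continuous dependence of the coefficients of $q_u=p_u$ in the ``moreover'' part, let $\varphi_u$ be an isotopy of $\id$-type maps and let $\hat{\varphi}_u$ be the corresponding normalized MRMT solutions; continuity of $u\mapsto\hat{\varphi}_u$ follows from the Ahlfors--Bers theorem, and hence so does continuous dependence of the coefficients of $p_u$. The $\id$-type asymptotics are checked locally: outside a compact set, each tract $T_n$ carries a univalent branch $L_n$ of $f^{-1}$ (Lemma~\ref{lmm:tracts}) that is uniformly contracting by Lemma~\ref{lmm:separation}. For $z\in S_0$ far from the origin, $f(z)$ lies on the next ray of the spider, and reading the Thurston square locally yields $\hat{\varphi}_u(z)=L_{n(z)}\bigl(\varphi_u(f(z))\bigr)$ modulo the fixed affine normalization. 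Since $\varphi_u(f(z))\to f(z)$ uniformly in $u$ on $S_0$ and $L_n$ has small Lipschitz constant, $\abs{\hat{\varphi}_u(z)-z}\to 0$ uniformly in $u$ on $S_0$, so $\hat{\varphi}_u$ is an isotopy of $\id$-type maps. Uniqueness of the $\id$-type representative then follows because two such representatives would differ by an affine automorphism compatible with $\id$-type asymptotics at $\infty$, forcing it to be $\id$.

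The main obstacle is coordinating the affine normalization so that all three requirements---$2\pi i$-equivariance, monicity of $p_u$, and $\id$-type asymptotics on $S_0$---hold simultaneously. This compatibility rests on a careful analysis of the local inverses $L_n$ at infinity together with the fact that $S_0$ escapes to $+\infty$ along horizontal strips where $f$ agrees with $p\circ\exp$, so that the local pull-back formula and the global MRMT normalization can be made to match.
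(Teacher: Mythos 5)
Your strategy diverges from the paper's in one important respect. The paper does not attempt to normalize $\hat\varphi$ directly for each fixed map: it joins $\varphi$ to $c^{-1}$ through an isotopy $\psi_u$ of $\id$-type maps (with $\psi_0=c^{-1}$, $\hat\psi_0=\id$, $g_0=f_0$), and uses \emph{continuity in $u$} to propagate both the choice of logarithm branch in the normalization $\hat\psi_u = \alpha_u\tilde\psi_u + \log(\beta_u)/d$ and the asymptotic identity $w_u = w_0(1+o(1))$ from the known base point $u=0$. Your proposal instead tries to pin down the normalization for a single $u$ via $2\pi i$-periodicity of $f$ plus monicity of $q$, and then check asymptotics by a local inverse formula.

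There are two genuine gaps. First, the display $\hat\varphi_u(z)=L_{n(z)}\bigl(\varphi_u(f(z))\bigr)$ is incorrect as written: from the Thurston square $g_u\circ\hat\varphi_u=\varphi_u\circ f$, the branch that appears is a local inverse of $g_u=p_u\circ\exp$, not a branch $L_n$ of $f^{-1}$. These coincide only if $g_u=f$, which is exactly what one does not have during the iteration. To conclude $\hat\varphi_u(z)-z\to 0$ you additionally need $g_u$ to be uniformly close to $f_0$ near $\infty$; the paper obtains this from the continuity (hence boundedness over $u\in[0,1]$) of the coefficients of $p_u$, giving $p_u(z)=z^d\bigl(1+O(1/z)\bigr)$ uniformly in $u$. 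You invoke continuity of the coefficients only for the ``moreover'' clause and never feed it into the asymptotic estimate, so the crucial step is missing. Second, your normalization (periodicity, i.e.\ $A(w)=w+2\pi i$, together with monicity of $q$) still leaves a residual translation freedom: replacing $\hat\varphi$ by $\hat\varphi + 2\pi i k/d$ ($k\in\mathbb{Z}$) preserves both constraints. This is precisely the branch ambiguity that the paper's anchoring at $\hat\psi_0=\id$ and continuity-in-$u$ argument is designed to eliminate. Without such an anchor, you cannot identify which of the countably many candidate normalizations is the one with $\hat\varphi_u(z)-z\to 0$ on $S_0$, so the existence of the $\id$-type representative (as opposed to its uniqueness, which you argue correctly) is not actually established by your proposal.
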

\begin{remark}
	Conceptually the proof is the same as of \cite[Theorem~5.5]{IDTT1} but since we have a more general family of functions, there is an upgrade on the level of formulas.
\end{remark}
\begin{proof}
	First, since $\hat{\varphi}$ is defined up to a postcomposition with an affine map, and $S_0$ contains continuous curves joining finite points to $\infty$, uniqueness is obvious. We only have to prove existence.
	
	Since $c^{-1}$ is also \idt, $\varphi$ can be joined to $c^{-1}$ through an isotopy \idt\ maps: we can simply take the concatenation of two isotopies to identity ($\varphi\sim\id$ and $\id\sim c^{-1}$). Let $\psi_u, u\in [0,1]$ be this isotopy with $\psi_0=c^{-1}$ and $\psi_1=\varphi$, and let $\tilde{\psi}_u$ be the unique isotopy of \qc\ self-homeomorphisms of $\mathbb{C}$, so that every $\tilde{\psi}_u$ is the solution of the Beltrami equation
	$$\frac{\partial_{\overline{z}}\tilde{\psi}_u}{\partial_z\tilde{\psi}_u}=\frac{\partial_{\overline{z}}(\psi\circ f)}{\partial_z (\psi\circ f)},$$
	fixing $0$ and $1$ (provided by the Measurable Riemann Mapping Theorem). Note that $\tilde{\psi}_0=\id$ because $\psi_0\circ f=c^{-1}\circ c\circ f_0=f_0$.
	Then the maps $h_u=\psi_u\circ f \circ \tilde{\psi}_u^{-1}$ will have the form 	
	$$h_u(z)=\beta_u q_u\circ\exp(\alpha_u z),$$ 	
	where $\alpha_u,\beta_u\in\mathbb{C}\setminus\{0\}$ and $q_u$ is a monic polynomial of degree $d$: a conformal branched covering of $\mathbb{C}$ of degree $d$ is necessarily a polynomial of the same degree, and a conformal branched covering of the punctured plane is the exponential (up to compositions with affine maps). Moreover, $\alpha_u,\beta_u$ and the coefficients of $q_u$ depend continuously on $u$.
	
	Now, consider the isotopy $\hat{\psi}_u(z):=\alpha_u\tilde{\psi}_u(z) +\log(\beta_u)/d$, where the branch of the logarithm is chosen so that $\psi_0\circ f \circ \hat{\psi}_0^{-1}=f$. Then
	$$g_u:=\psi_u\circ f \circ \hat{\psi}_u^{-1}=p_u\circ\exp$$ 
	is a homotopy of entire functions so that $g_u\in\mathcal{N}_d$, $p_0=p$ and the polynomial coefficients are continuous in $u$.	
	
	Since the coefficients of $p_u$ are continuous, they are bounded on $[0,1]$. Thus $p_u(z)=z^d(1+O(1/z))$ where $O(.)$ is a bound that is uniform in $u$.
	
	Now, let $z\in S_0$, and assume that $\abs{z}$ is big enough (so that all subsequent computations are correct). Since all $\psi_u$ are \idt, $\abs{\psi_u\circ f(z)-f(z)}<1$ for every $u$. Let $w_u=w_u(z):=\exp (\hat{\psi}_u (z))$. Then 
	
	$$\abs{\psi_u\circ f(z)-f(z)}=\abs{\psi_u\circ f(z)-\psi_0\circ f(z)}=\abs{p_u(w_u)-p(w_0)}<1.$$
	Hence	
	$$w_u^d(1+O(1/w_u))=w_0^d(1+O(1/w_0)).$$	
	Due to the continuity of $\hat{\psi}_u$ and since $\hat{\psi}_0=\id$, we have	
	$$w_u=w_0 (1+o(1)),$$
	where $o(\cdot)\to 0$ uniformly in $u$ as $z\to\infty$. Hence
	$$\exp(\hat{\psi}_u (z))=\exp(\hat{\psi}_0 (z))(1+o(1)).$$
	Again, due to the continuity of $\hat{\psi}_u$	
	$$\abs{\hat{\psi}_u (z) - \hat{\psi}_0 (z)}=\abs{\hat{\psi}_u (z)-z}=o(1).$$	
	The first statement of the theorem is then proven with $\hat{\varphi}:=\hat{\psi}_1$.
	
	The last paragraph of the theorem follows immediately from the computations.
\end{proof}

Theorem~\ref{thm:pullback_of_id_type} implies that the notion of external address is preserved when we iterate $\id$-type maps, in a sense that under proper normalization, i.e.\ when we choose the map $\hat{\varphi}$, the images of dynamic rays under $\hat{\varphi}$ preserve original asymptotic straight lines. 

\begin{remark}
	In the sequel we keep using the hat-notation from Theorem~\ref{thm:pullback_of_id_type}. That is, $\hat{\varphi}$ denotes the unique $\id$-type map so that $\varphi\circ f\circ\hat{\varphi}^{-1}$ is entire. Due to Theorem~\ref{thm:pullback_of_id_type} this notation makes sense whenever $\varphi$ is \idt.	
\end{remark}

\subsection{Spiders}

\label{subsec:spiders}

In this subsection we discuss infinite-legged spiders and their properties. Everything is the ad hoc adaptation of the machinery developed in \cite{IDTT1}.

\begin{defn}[Spider]
	\label{defn:spider}
	An image $S_{\varphi}$ of the standard spider $S_0$ under an $\id$-type map $\varphi$ is called a \emph{spider}.	
\end{defn}

\begin{defn}[Spider legs]
	The image of a ray tail $R_{ij}$ under a spider map is called a \emph{leg} (of a spider).
\end{defn}

Let $V=\{w_n\}\subset\mathbb{C}$ be a finite set. Further, let $\gamma:[0,\infty]\to\hat{\mathbb{C}}$ be a curve such that $\gamma(0)\in V$, $\gamma(\infty)=\infty$, $\gamma|_{\mathbb{R}^+}\subset\mathbb{C}\setminus V$ and $\Re\gamma(t)\to+\infty$ as $t\to\infty$. On the set of all such pairs $(V,\gamma)$ one can consider a map
$$W: (V,\gamma)\mapsto W(V,\gamma)\in F(V),$$
where $F(V)$ is the free group on $V$ (every element of $F(V)$ is a finite word with symbols from the alphabet $\{w_1^{\pm 1},\dots,w_n^{\pm1}\}$). This map uniquely encodes the homotopy type of $\gamma$ (with fixed endpoints) in $\mathbb{H}_r\setminus V$ where $\mathbb{H}_r=\{z\in\mathbb{C}:\Re z>r\}$ contains $V$ and $\gamma$ (a particular value of $r$ is irrelevant). Roughly speaking, we homotope $\gamma$ into a concatenation of a horizontal straight ray from $\gamma(0)$ to $+\infty$, and a loop with the base point at $\infty$, and by $W(V,\gamma)$ denote the representation of the loop via ``straight horizontal'' generators of the fundamental group of $(\mathbb{H}_r\setminus V)\cup\infty$. For the details of the construction we refer the reader to \cite[Subsection~6.1]{IDTT1}. We are going to use this type of information for every leg of a spider, in order to obtain a tame description of the $\id$-type points in $\mathcal{T}_f$.

For every pair of $i\in \{1,2,...,m\},j\geq 0$ denote
$$\mathcal{O}_{ij}:=\{a_{kl}\in P_f: l<j\text{ or } l=j,k\leq i\}.$$

\begin{defn}[Leg homotopy word]
	\label{defn:leg_homotopy_word}
	Let $S_\varphi$ be a spider. Then the \emph{leg homotopy word} of a leg $\varphi(R_{ij})$ is 
	$$W_{ij}^\varphi:=W(\varphi(\mathcal{O}_{ij}),\varphi(R_{ij})).$$
\end{defn}

Next theorem provides an estimate of how the leg homotopy words changes under the Thurston iteration.

\begin{thm}[Combinatorics of a preimage]
	\label{thm:homotopy_type_under_pullback}
	Let $\varphi$ be \idt. Then $$\abs{W_{ij}^{\hat{\varphi}}}<A(j+1)^4 \max\{1,\abs{W_{i(j+1)}^\varphi}\},$$ where $A$ is a positive real constant.
\end{thm}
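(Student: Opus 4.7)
The plan is to exploit Theorem~\ref{thm:pullback_of_id_type}: since $\varphi$ is \idt\ and $g:=\varphi\circ f\circ\hat{\varphi}^{-1}\in\mathcal{N}_d$ is entire, the identity $g\circ\hat{\varphi}=\varphi\circ f$ implies that $g$ maps the leg $\hat{\varphi}(R_{ij})$ bijectively and conformally onto $\varphi(R_{i(j+1)})$. By Lemma~\ref{lmm:tracts}, $\hat{\varphi}(R_{ij})$ lies eventually in a single tract $T$ of $g$ (the one at height $\approx 2\pi s_{i0}/d$, where $s_{i0}$ is the first symbol of the external address of $R_{ij}$), and $g|_T$ is a conformal isomorphism onto a right half-plane $\mathbb{H}_r$. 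Thus $\hat{\varphi}(R_{ij})$ is the pullback of $\varphi(R_{i(j+1)})$ via the distinguished tract branch $(g|_T)^{-1}$, and $W_{ij}^{\hat{\varphi}}$ is the leg homotopy word of this pullback.

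Next, I would realize $W_{i(j+1)}^{\varphi}$ by a canonical straightening of $\varphi(R_{i(j+1)})$ (rel endpoints, in $\mathbb{C}\setminus\varphi(\mathcal{O}_{i(j+1)})$): a horizontal ray from $\varphi(a_{i(j+1)})$ to $+\infty$, concatenated with $|W_{i(j+1)}^{\varphi}|$ horizontal loops based near $\infty$, each enclosing a single point of $\varphi(\mathcal{O}_{i(j+1)})$, exactly in the spirit of \cite[Subsection~6.1]{IDTT1}. Lifting this straightening through $(g|_T)^{-1}$ yields a curve homotopic to $\hat{\varphi}(R_{ij})$ in $\mathbb{C}\setminus\hat{\varphi}(\mathcal{O}_{ij})$, and $W_{ij}^{\hat{\varphi}}$ can be read off from its crossings with $\hat{\varphi}(\mathcal{O}_{ij})$. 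Two contributions appear: (i) the horizontal ray lifts, by the asymptotic formula of Theorem~\ref{thm:as_formula}, to a nearly straight curve in $T$ which, upon straightening within $T$, contributes at most $O((j+1)^2)$ crossings (since $|\mathcal{O}_{ij}|=O(j)$ and each marked point can force only a bounded number of passings dictated by the tract geometry); (ii) each loop in the target lifts to an arc that must close up at the unique candidate preimage $\hat{\varphi}(a_{k(l-1)})\in\hat{\varphi}(\mathcal{O}_{ij})$ and then return to $+\infty$, contributing at most $O((j+1)^3)$ generators per loop once the $d$-to-one branched cover structure of $p$ is absorbed by Koebe distortion on $T$. Summing gives
\[
\abs{W_{ij}^{\hat{\varphi}}}\leq O((j+1)^2)+O((j+1)^3)\abs{W_{i(j+1)}^{\varphi}},
\]
which fits the bound $A(j+1)^4\max\{1,\abs{W_{i(j+1)}^{\varphi}}\}$ for a suitable constant $A$.

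The main obstacle lies in step (ii). A loop in the target around $\varphi(a_{kl})$ pulls back to an arc whose two ``ends at $\infty$'' land on the two boundary rays of $T$, and the arc must then be homotoped within $T\setminus\hat{\varphi}(\mathcal{O}_{ij})$ to a canonical horizontal loop around $\hat{\varphi}(a_{k(l-1)})$; the additional wrapping introduced by the $d$-to-one branched covering $p$ and by the $O(j)$ marked points of $\hat{\varphi}(\mathcal{O}_{ij})$ that need to be threaded past has to be controlled via Koebe distortion combined with the leg-word bookkeeping of \cite[Subsection~6.1]{IDTT1}. This is precisely where the $\mathcal{N}_d$-setting strictly upgrades the exponential computation of \cite{IDTT1}: the extra factor of $d$ coming from $p$ must be absorbed into the polynomial $(j+1)^4$ bound without spoiling the contraction estimates used later in Section~\ref{sec:inv_compact_subset}.
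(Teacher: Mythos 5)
Your proposal diverges from the paper's route: the paper does not reprove this bound at all, but reduces it in one line to the already-proved combinatorial estimate \cite[Theorem~6.16]{IDTT1}, observing that with $V=\varphi(\mathcal{O}_{i(j+1)})$, $\gamma=\varphi(R_{i(j+1)})$, $\tilde\gamma=\hat\varphi(R_{ij})$ that theorem yields $\abs{W_{ij}^{\hat{\varphi}}}<42(m(j+1)+i+2)^4(\abs{W_{i(j+1)}^\varphi}+1)$, and then absorbs $m$, $i\le m$ and the additive $+1$ into a single constant $A=A(d,m)$. In other words, the paper treats the statement as a relabeling of a prior result, not as something that requires a new argument in the $\mathcal{N}_d$ setting.

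Your attempted from-scratch re-derivation has a genuine gap: the two crucial quantitative assertions --- that the lift of the horizontal ray contributes $O((j+1)^2)$ generators and that each lifted loop contributes $O((j+1)^3)$ --- are stated but not established, and they are precisely the content of the theorem. You acknowledge this in your last paragraph ("the main obstacle lies in step (ii)") but do not resolve it; as written, the proof terminates exactly where the work begins. Moreover, the worry you raise about ``the extra factor of $d$ coming from $p$'' needing to be ``absorbed into the polynomial $(j+1)^4$ bound'' is misplaced on two counts. First, $d$ is fixed throughout, so any $d$-dependent factor lands harmlessly in the constant $A$; nothing about the exponent $4$ is threatened. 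Second, and more to the point, the relevant pullback is through the distinguished tract branch $(g|_T)^{-1}$, which is univalent (Lemma~\ref{lmm:tracts}); there is no $d$-to-one wrapping to control at all. The branching of $p$ occurs at critical points that do not lie in the tracts, so Koebe distortion for a degree-$d$ cover is not what is needed here --- what is needed is the crossing/threading bookkeeping of \cite[Subsection~6.1, Lemmas~6.6 and~6.8, Theorem~6.16]{IDTT1}, which is purely topological given the tract structure and therefore applies verbatim once the indices $(i,j)$ are flattened to $mj+i-1$. If you want to avoid citing \cite[Theorem~6.16]{IDTT1}, you would essentially have to reproduce that proof; the sketch as given does not do so.
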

\begin{proof}
	This is a restatement of \cite[Theorem~6.16]{IDTT1} for our context. That is, when (in the notation of \cite[Theorem~6.16]{IDTT1}) $V=\varphi(\mathcal{O}_{i(j+1)})$, $\gamma=\varphi(R_{i(j+1)})$ and $W(V,\gamma)=W_{i(j+1)}^\varphi$. Then if $\tilde{\gamma}=\hat{\varphi}(R_{ij})$,
	$$\abs{W_{ij}^{\hat{\varphi}}}<42d(m(j+1)+i)^4 (\abs{W_{i(j+1)}^\varphi}+1)<A(j+1)^4\max\{\abs{W_{i(j+1)}^\varphi},1\}.$$
\end{proof}

Finally, we introduce a special equivalence relation of spiders, which coincides with the \tei\ equivalence of the associated $\id$-type maps.

\begin{defn}[Projective equivalence of spiders]
	\label{defn:proj_equiv}
	We say that two spiders $S_\varphi$ and $S_\psi$ are projectively equivalent if for all pairs $i,j$ we have $\varphi(a_{ij})=\psi(a_{ij})$ and $W_{ij}^\varphi=W_{ij}^\psi$.
\end{defn}

\begin{thm}[Projective equivalence of spiders is \tei\ equivalence]
	\label{thm:W_define_teich_point}
	Two spiders $S_\varphi$ and $S_\psi$ are projectively equivalent if and only if $[\varphi]=[\psi]$, i.e.\ $\varphi$ is isotopic to $\psi$ relative ${P_f}$.
\end{thm}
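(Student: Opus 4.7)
Assume $[\varphi] = [\psi]$, realized by an isotopy $\varphi_u$ through maps \idt\ with $\varphi_u|_{P_f}$ constant in $u$. Then $\varphi(a_{ij}) = \psi(a_{ij})$ is immediate. The family $u \mapsto \varphi_u(R_{ij})$ is a continuous family of arcs from $\varphi(a_{ij})$ to infinity, staying in $\mathbb{C} \setminus (P_f \setminus \{a_{ij}\})$. Since each $\varphi_u$ is \idt, the uniform convergence $|\varphi_u(z) - z| \to 0$ as $S_0 \ni z \to \infty$ forces the tails of these arcs to lie, uniformly in $u$, in any right half-plane $\mathbb{H}_r$ containing the finite set $\varphi(\mathcal{O}_{ij})$. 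This produces a homotopy in $\mathbb{H}_r \setminus \varphi(\mathcal{O}_{ij})$ from $\varphi(R_{ij})$ to $\psi(R_{ij})$ with matched endpoints and asymptotics, giving $W_{ij}^\varphi = W_{ij}^\psi$.

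\textbf{Backward direction.} The plan is to construct an isotopy \idt\ maps from $\varphi$ to $\psi$ in two stages. First, I would align the legs inductively, processing the pairs $(i,j)$ in lexicographic order on $(j,i)$. After finitely many steps, I may assume (by replacing $\varphi$ by an isotopic $\id$-type map) that $\varphi(R_{kl}) = \psi(R_{kl})$ for all $(l,k) < (j,i)$. The arcs $\varphi(R_{ij})$ and $\psi(R_{ij})$ then share the endpoint $\varphi(a_{ij})$, have matching asymptotics at infinity (both \idt), and represent the same element of $\pi_1(\mathbb{H}_r \setminus \varphi(\mathcal{O}_{ij}))$ by equality of leg words. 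Standard planar topology (analogous to the arguments of \cite[Section~6]{IDTT1}) gives a homotopy of arcs in the complement of $\varphi(\mathcal{O}_{ij})$ and the already-fixed legs; this extends to a compactly supported ambient isotopy of $\mathbb{C}$, and a small perturbation ensures that the remaining marked points in the support, which form a finite set, are fixed throughout. Second, once $\varphi|_{S_0} = \psi|_{S_0}$, the composition $h := \psi \circ \varphi^{-1}$ is \idt\ and fixes the spider $\varphi(S_0)$ pointwise. Each component of $\mathbb{C} \setminus \varphi(S_0)$ is simply connected and $h$-invariant with identity boundary behavior, so the Alexander trick (applied after conformal uniformization of each component) isotopes $h$ to the identity rel $\varphi(S_0) \supset P_f$.

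\textbf{Main obstacle.} The hard part will be the infinite iteration in the first stage: I need to ensure that the infinitely many local isotopies concatenate into a single isotopy \idt\ maps, preserving the uniform asymptotic triviality along $S_0$ in the limit. Because the supports of the individual isotopies can be localized near their respective legs and shrink appropriately toward infinity, and because each isotopy only needs to avoid the finitely many marked points meeting its support, the concatenation is well defined and the $\id$-type property survives. Making this precise is the natural generalization to the infinite-legged setting of the analogous finite-legged statement in \cite{IDTT1}.
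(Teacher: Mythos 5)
The paper proves this statement with a one-line reduction: relabel the double index $i,j$ as $mj+i-1$ and cite \cite[Theorem~6.28]{IDTT1} (with \cite[Remark~6.29]{IDTT1}). So the paper does not re-derive the theorem here at all; the substance lives in the earlier paper.

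Your proposal is a direct reconstruction of that substance, and its overall architecture (inductive alignment of disjoint legs, then the Alexander trick on the complementary simply-connected pieces) is the right shape. The forward direction is fine. But the backward direction has two genuine gaps beyond the one you flag. First, the step ``a small perturbation ensures that the remaining marked points in the support, which form a finite set, are fixed throughout'' is not a valid argument. The equality $W_{ij}^{\varphi}=W_{ij}^{\psi}$ gives a homotopy of arcs rel the \emph{finite} set $\varphi(\mathcal{O}_{ij})$ only; nothing in that datum prevents the homotopy from genuinely sweeping across marked points $\varphi(a_{kl})$ with $(l,k)>(j,i)$, or from winding around them differently at the end than at the start. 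A perturbation cannot repair a homotopy that crosses a puncture. One needs a real argument that the homotopy can be taken rel all of $\varphi(P_f)$ and rel the already-aligned legs: this uses that the already-aligned legs together with $\infty$ form a tree, so their complement in $\hat{\mathbb{C}}$ is simply connected, and that the $\id$-type asymptotics pin down the prime end at $\infty$ through which both arcs exit, while the position of the remaining marked points is controlled by the $\id$-type condition near $\infty$ and by the other leg words near the origin. Second, the ``main obstacle'' of concatenating infinitely many local isotopies into a single isotopy of $\id$-type maps, with uniform asymptotic control surviving the limit, is acknowledged but not carried out. Both of these points are exactly what \cite[Section~6]{IDTT1} is built to handle, so your proposal is an honest roadmap toward the cited result rather than an independent proof of it.
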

\begin{proof}
	If we replace the double index $i,j$ by $mj+i-1$, then we are in the setting of \cite[Theorem~6.28]{IDTT1} (see also \cite[Remark~6.29]{IDTT1}).
\end{proof}

\section{Invariant compact subset}

\label{sec:inv_compact_subset}
\subsection{Preliminary constructions}
\label{subsec:prel_constructions}

As earlier we consider the \qc\ function $f=c\circ f_0$ with $m$ singular orbits $\mathcal{O}_i=\{a_{ij}\}$ constructed in Section~\ref{sec:setup_and_contraction} and the associated $\sigma$-map. Denote also by $t_{ij}$ the potential of $a_{ij}$ and by $s_{ij}$ the index of the tract containing $R_{ij}$ near $\infty$. The goal of this subsection is to prove a few preliminary lemmas that provide estimates on the pullback-spider $S_{\hat{\varphi}}$ based on different types of information about a given spider $S_\varphi$. 

Let $\mathcal{P}=\{t_i\}_{i=1}^\infty$ be the set of all potentials of points in $P_f$ ordered so that $t_i<t_{i+1}$. Obviously, to each potential correspond at most $m$ points of $P_f$, and $\mathcal{P}$ is ``eventually periodic'' in a sense that there exists a positive integer $T\leq m$ such that for all $t_i$ big enough $t_{i+T}=F(t_i)$. Note that $T$ can be strictly less than $m$ if points from different orbits have equal potentials.

Also we define a set $$\mathcal{P}':=\{\rho_i:\rho_i=\frac{t_i+t_{i+1}}{2}\}.$$ It will come into action a bit later.

Next definition describes a structure that might appear in the case when we have more than one singular orbit. 

\begin{defn}[Cluster]
	\label{defn:cluster}
	We say that $a_{ij}$ and $a_{kl}$ are in the same \emph{cluster $Cl(t,s)$} if they have the same potential $t=t_{ij}=t_{kl}$ and belong to the ray tails $R_{ij}$ and $R_{kl}$ contained in the same tract $T_s=T_{ij}=T_{kl}$ near $\infty$.
	
	A cluster is called non-trivial if it contains more than one point of $P_f$.
\end{defn}

The set $P_f$ is a disjoint union of clusters and each cluster contains at most $m$ points. As can be seen from the asymptotic formula~\ref{eqn:as_formula}, for points with big potentials, being in the same cluster implies that the distance between them is very small, whereas the distance between any pair of clusters is bounded from below.

Denote by $\SV(f)$ the singular values of $f$, i.e.\ the image of the (finite) singular values of $f_0$ under $c$. Since we are interested only in the parameter space of $f_0$, without loss of generality we might assume that $\SV(f_0)\cap I(f_0)=\emptyset$, in particular, all exponentially bounded external addresses and potentials are realized. We say that a list of external addresses $\{\underline{s}_i\}_{i=1}^m$ and potentials $\{T_i\}_{i=1}^m$ \emph{admits finitely many non-trivial clusters}, if the union of the orbits of $f_0$ realizing $\{\underline{s}_i\}_{i=1}^m$ and $\{T_i\}_{i=1}^m$ contains finitely many non-trivial clusters. On the level of external addresses and potentials this means that there are only finitely many pairs of indexes $i,j$ and $k,l$ such that simultaneously hold $s_{ij}=s_{kl}$ and $F^j(T_i)=F^k(T_l)$.

It might be not immediately clear why in Theorem~\ref{thm:main_thm} we consider only the case of finitely many non-trivial clusters. A short answer is that if there are infinitely many of them, the construction of the invariant compact subset in Theorem~\ref{thm:invariant_subset} becomes more subtle; a particular example of what goes wrong is discussed right after it.

The goal of the next lemma is to separate $P_f$ and $S_0$ into two parts: one near $\infty$ with good estimates and ``straight'' dynamic rays, and one near the origin where the points are located somewhat more chaotically and ``entanglements'' of the dynamic rays might happen. This is a preparational statement \emph{solely} about $f_0$ and the standard spider.

\begin{lmm}[Good estimates for $S_0$ near $\infty$]
	\label{lmm:good_behavior_of_f_0}
	There exists $t'>0$ such that:
	\begin{enumerate}
		\item if $t_i>t_j>t'$, then $t_i-t_j>2$,
		\item if $t_{kl}>t_{ij}>t'$, then $\abs{a_{kl}}>\abs{a_{ij}}+2$,
		\item if $\rho\in \mathcal{P}'$ is bigger than $t'$, then all $a_{ij}$ with the potential less than $\rho$ are contained in $\mathbb{D}_{\rho-1}(0)$, while all $a_{ij}\in P_f$ with the potential bigger than $\rho$ are contained in the complement of $\mathbb{D}_{\rho+1}(0)$.
	\end{enumerate}	
\end{lmm}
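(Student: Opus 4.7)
The plan is to combine the asymptotic straightness of dynamic rays near infinity, encoded in formula~(\ref{eqn:as_formula}) from Theorem~\ref{thm:as_formula}, with the eventual $F$-periodicity of the potential set $\mathcal{P}$, and then read off all three conclusions from a single sufficiently large choice of $t'$.

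First, I would write each marked point as
$$a_{ij} = t_{ij} + \frac{2\pi i s_{ij}}{d} + O(e^{-t_{ij}/2}),$$
where $s_{ij}$ is the $j$-th entry of $\underline{s}_i$. Exponential-boundedness of $\underline{s}_i$, applied via the remark after Definition~\ref{dfn:exp_bdd_address} with $k=2$ and $t = t_{i0} > t_{\underline{s}_i}$, gives $s_{ij}/t_{ij}^{1/2} \to 0$, hence $s_{ij}^2/t_{ij} \to 0$. Squaring the asymptotic then yields
$$|a_{ij}| = t_{ij} + o(1) \quad \text{as } t_{ij} \to \infty,$$
with the $o(1)$ uniform over $i \in \{1, \dots, m\}$ since there are only finitely many orbits.

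For (1), I would exploit the structure of $\mathcal{P}$: for sufficiently large indices $t_{i+T} = F(t_i) = e^{dt_i} - 1$ with period $T \leq m$, so each period contains at most $m$ potentials and the minimal consecutive gap inside that period is at least $(F(t_i) - t_i)/m$, which tends to $+\infty$. I therefore choose $t'$ so that $t_{i+1} - t_i > 5$ whenever $t_i > t'$, and enlarge $t'$ once more so that the $o(1)$ above is smaller than $1$. With these choices, (2) is immediate: for $t_{kl} > t_{ij} > t'$ one has $|a_{kl}| - |a_{ij}| > (t_{kl} - 1) - (t_{ij} + 1) \geq 5 - 2 > 2$.

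For (3), given $\rho = (t_i + t_{i+1})/2 > t'$, any marked point with potential at most $t_i$ has modulus at most $t_i + 1 \leq \rho - 3/2 < \rho - 1$, while any with potential at least $t_{i+1}$ has modulus at least $t_{i+1} - 1 \geq \rho + 3/2 > \rho + 1$. A final enlargement of $t'$ absorbs the finitely many marked points with potential below $t'$ into $\mathbb{D}_{\rho - 1}(0)$. The only subtlety I anticipate is ensuring uniformity of the $o(1)$ estimate across orbits, but this is automatic because $m$ is finite, so one may take the maximum of the $m$ individual asymptotic thresholds.
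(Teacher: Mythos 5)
Your approach is in line with the paper's one-line proof (which points to the asymptotic formula~(\ref{eqn:as_formula}) and the smallness of $s_{ij}$ relative to $\Re a_{ij}$), and your sharpening to $\abs{a_{ij}} = t_{ij} + o(1)$ via the remark after Definition~\ref{dfn:exp_bdd_address} with $k=2$ is actually cleaner than the literal hint: $s_{ij}/\Re a_{ij}\to 0$ by itself only yields the multiplicative estimate $\abs{a_{ij}}=t_{ij}\,(1+o(1))$, in which case one would additionally need to show that the potential gaps grow faster than the potentials themselves. So that part of your argument is fine, and the uniformity over the finitely many orbits is handled correctly.

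There is, however, a real slip in the gap analysis. The assertion that ``the minimal consecutive gap inside that period is at least $(F(t_i)-t_i)/m$'' is false: the quantity $(F(t_i)-t_i)/T$ is the \emph{average} gap over the period, which bounds the \emph{maximal} gap from below, not the minimal one. The conclusion that consecutive gaps tend to $+\infty$ is nevertheless correct, but it needs a different justification. For example, from $t_{i+T}=F(t_i)$ and the mean value theorem, $t_{(i+1)+T}-t_{i+T}=F'(\xi)\,(t_{i+1}-t_i)\geq d\,e^{dt_i}\,(t_{i+1}-t_i)$ for some $\xi\in(t_i,t_{i+1})$; since $d\,e^{dt_i}\to\infty$, every gap is multiplied after $T$ steps by a factor tending to $\infty$, so each of the $T$ residue classes of gaps blows up and all gaps eventually exceed $5$. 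With this substitution the rest of your argument --- in particular the deduction of items (1)--(3), including absorbing the finitely many marked points of small potential by a final enlargement of $t'$ --- goes through as written.
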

\begin{proof}
	Follows immediately from the asymptotic formula~\ref{eqn:as_formula} and the fact that $s_{ij}/{\Re a_{ij}}\to 0$ as $j\to\infty$.
\end{proof} 

For all $\rho\in\mathcal{P}'$ we introduce the following notation. Let $D_\rho:=\mathbb{D}_\rho (0)$, and for $i\in\{1,...,m\}$ let $N_i=N_i(\rho)$ be the maximal $j$-index of the points $a_{ij}$ contained in $D_\rho$. For $\rho>t'$ the disk $D_\rho$ contains first $N_i+1$ points $\{a_{i0},...,a_{iN_i}\}$ of $\mathcal{O}_i$, whereas the other points of $\mathcal{O}_i$ are in $\mathbb{C}\setminus{D}_{\rho}$.

\subsubsection{Non-combinatorial estimates on $\hat{\varphi}(P_f)$}

The following proposition concerns the positions of the points in $P_f$ under the $\id$-type maps $\varphi$ and $\hat{\varphi}$ and under their isotopies $\varphi_u$ and $\hat{\varphi}_u$ to $c^{-1}$. In particular, if under $\varphi_u$ the points of $P_f$ outside of $D_\rho$ do not move ``much'', while the rest of the points of $P_f$ stays inside of $D_\rho$, then the first ones under $\hat{\varphi}_u$ do not move ``much'' as well, while those inside $D_\rho$ move inside of a left half-plane $\{z: \Re z<\rho/2\}$.

\begin{prp}[Non-combinatorial estimates on $\hat{\varphi}(P_f)$]
	\label{prp:good_big_disk_around_origin}
	There exists $k_1>t'$ such that for all $\rho\in\mathcal{P}'\cap [k_1,\infty]$ holds the following statement.
	
	Let $\varphi$ be an $\id$-type map with $\varphi(\SV(f))\subset D_\rho$, and let $\varphi_u,u\in[0,1]$ be an isotopy \idt\ maps such that $\varphi_0=c^{-1}$ and for all $u\in[0,1]$ $\varphi_u(\SV(f))\subset D_\rho$. Then: 
	\begin{enumerate}
		\item (\emph{Inside of $D_\rho$}) If $\varphi(a_{i(j+1)})\in D_\rho$, then $$\Re \hat{\varphi}(a_{ij})<\frac{\rho}{2}.$$
		\item (\emph{Crossing the boundary}) If $\left|\varphi(a_{i(N_i +1)})-a_{i(N_i +1)}\right|<1$, then $$\Re\hat{\varphi}\left(a_{iN_i}\right)< \frac{\rho}{2}.$$
		\item (\emph{Outside of $D_\rho$}) If $a_{ij}\notin{D}_\rho$ and $\left|\varphi_{u}(a_{i(j+1)})-a_{i(j+1)}\right|<1$ for all $u\in[0,1]$, then $$\left|\hat{\varphi}_u(a_{ij})-a_{ij}\right|<\frac{1}{j}.$$
	\end{enumerate}	
\end{prp}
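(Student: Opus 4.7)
The three parts all exploit the pullback identity $g_u\circ\hat\varphi_u = \varphi_u\circ f$ on $P_f$, where by Theorem~\ref{thm:pullback_of_id_type} we have $g_u = p_u\circ\exp\in\mathcal{N}_d$ for $p_u$ monic of degree $d$ with coefficients continuous in $u$, and $g_0 = f_0$. The preliminary step I would carry out is a uniform polynomial bound $\|p_u\|\leq C\rho^{\alpha}$ on the coefficients of $p_u$, valid along the whole isotopy: the singular values of $g_u$ are exactly the critical values of $p_u$ together with the finite asymptotic value $p_u(0)$, so the hypothesis $\varphi_u(\SV(f))\subset D_\rho$ places them all in $D_\rho$; the standard relations between a monic polynomial and its critical values then control $\|p_u\|$ polynomially in $\rho$. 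This gives, uniformly in $u$, the asymptotic $|g_u(z)| = e^{d\Re z}(1+o(1))$ whenever $\Re z > C'\log\rho$.

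For part~(1), the identity yields $|g(\hat\varphi(a_{ij}))| = |\varphi(a_{i(j+1)})| < \rho$. Either $\Re\hat\varphi(a_{ij})\leq C'\log\rho$ (done), or the uniform asymptotic applies and gives $e^{d\Re\hat\varphi(a_{ij})}\lesssim\rho$, i.e.\ $\Re\hat\varphi(a_{ij})\lesssim\log\rho/d$. Both cases are well below $\rho/2$ once $k_1$ is large enough.

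Part~(2) follows the same scheme, except now $|\varphi(a_{i(N_i+1)})|\leq |a_{i(N_i+1)}|+1\approx t_{i(N_i+1)} = F(t_{iN_i}) = e^{dt_{iN_i}}-1$ by the asymptotic formula of Theorem~\ref{thm:as_formula}. Combining with the uniform asymptotic for $g_u$ gives $\Re\hat\varphi(a_{iN_i})\leq t_{iN_i}+o(1)\leq t_a+o(1)$, where $\rho = (t_a+t_{a+1})/2$. The desired inequality $<\rho/2$ then reduces to $t_{a+1}/t_a > 3+o(1)$. The eventual periodicity of $\mathcal{P}$ with period $T\leq m$ together with the super-exponential expansiveness of $F$ force $t_{a+1}/t_a\to\infty$ as $a\to\infty$: for $T=1$ directly $t_{a+1} = F(t_a)$; for $T\geq 2$ the consecutive potentials $t_a,t_{a+1}$ are of the form $F^k(t_{i_1,0})$ and $F^k(t_{i_2,0})$ for different orbits, and each application of $F$ multiplies their difference by at least $de^{dF^{k-1}(t_{i_1,0})}$, so the ratio tends to $\infty$ as $k\to\infty$. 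Choosing $k_1$ large enough to exclude the finitely many levels on which $t_{a+1}/t_a\leq 3$ settles this part.

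For part~(3), the point $a_{ij}$ lies outside $D_\rho$, so $|e^{a_{ij}}|\geq e^{\rho}$ dominates $\|p_u\|$ and $p_u$ is locally biholomorphic near $e^{a_{ij}}$. I would take the branch of $p_u^{-1}$ mapping $a_{i(j+1)}$ to $e^{a_{ij}}$ at $u = 0$ and continue it along the isotopy using the continuity of the polynomial coefficients. Inverting $p_u(w) = w^d(1 + O(\|p_u\|/w))$ and taking $\log$ gives
$$\hat\varphi_u(a_{ij}) = \frac{1}{d}\log\varphi_u(a_{i(j+1)}) + O(\|p_u\|/|a_{i(j+1)}|^{1/d}),$$
while $|\varphi_u(a_{i(j+1)})-a_{i(j+1)}|<1$ together with $a_{i(j+1)} = f(a_{ij}) = e^{da_{ij}}(1+o(1))$ reduces the first term to $a_{ij} + O(1/|a_{i(j+1)}|)$. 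Hence $|\hat\varphi_u(a_{ij})-a_{ij}| = O(\|p_u\|/|a_{i(j+1)}|^{1/d})$. Since $|a_{i(j+1)}|\gtrsim e^{d\rho}$ already for $j = N_i+1$, while $j$ for points just outside $D_\rho$ grows only like an iterated logarithm of $\rho$, the right side is orders of magnitude smaller than $1/j$. The main technical obstacle in all of the above is the uniform coefficient bound on $p_u$ combined with the continuous selection of the branch of $p_u^{-1}$ along the isotopy; both ultimately rest on the through-isotopy hypothesis $\varphi_u(\SV(f))\subset D_\rho$ and on the continuity of the coefficients of $p_u$ starting from $p_0 = p$.
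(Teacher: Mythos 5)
Your proposal is correct and follows essentially the same route as the paper: the pullback identity $g_u\circ\hat\varphi_u=\varphi_u\circ f$ on $P_f$, a uniform coefficient bound on $p_u$ coming from $\SV(g_u)=\varphi_u(\SV(f))\subset D_\rho$ (the paper's Lemma~\ref{lmm:bounds_of_coefficients_through_SV}, which in fact gives the sharper $|b_k^u|<L\rho^{(d-k)/d}$), the resulting exponential asymptotic of $g_u$, and for part~(3) a branch-tracking argument along the isotopy with $\hat\varphi_0=\id$ providing the base point. The reduction of part~(2) to $t_{a+1}/t_a>3+o(1)$, and its verification via eventual periodicity of $\mathcal P$ and super-exponential expansion of $F$, is a useful explicit justification of a fact the paper only asserts (by ``increase $\rho$ so that \dots $t_k/t_{k+1}<1/4d$''); do also cover the residue class $a\equiv 0\pmod T$, where consecutive potentials are of the form $F^k(\tau_T)$ and $F^{k+1}(\tau_1)$, not both at level $k$ — the same growth argument still applies.

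One step in part~(3) is stated too quickly. To speak of ``continuing the branch of $p_u^{-1}$ along the isotopy'' you must first show \emph{a priori} that $w_u:=\exp(\hat\varphi_u(a_{ij}))$ stays far from the critical points of $p_u$ for \emph{all} $u\in[0,1]$, not just at $u=0$; otherwise the asymptotic expansion $p_u(w)=w^d(1+O(\|p_u\|/w))$ and the inversion are not justified uniformly. The paper handles this with Lemma~\ref{lmm:complement_of_big_disk_under_preimage}(2): since $|a_{ij}|^{2d+1}+1\le|a_{i(j+1)}|$ (hence $|\varphi_u(a_{i(j+1)})|>\rho^{2d+1}$ for all $u$), every $q_u$-preimage of $\varphi_u(a_{i(j+1)})$ lies outside $\mathbb D_{\rho^2}(0)$, so $|w_u|>\rho^2$ uniformly in $u$; only then does the logarithm become well-defined up to $2\pi i\mathbb Z$ and the integer branch $n_{ij}^u$ is pinned to $0$ by continuity from $\hat\varphi_0=\id$. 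An equivalent continuity/contradiction argument (if $|w_u|$ ever dropped to $\rho^2$ then $|g_u(\hat\varphi_u(a_{ij}))|\lesssim\rho^{2d+1}\ll|a_{i(j+1)}|-1$) would also serve; but as written the proposal skips this and it should be supplied. Otherwise the estimates, the comparison with $1/j$ via the super-exponential growth of $|a_{i(j+1)}|$, and the treatment of parts~(1) and~(2) are all sound and match the paper's argument in substance.
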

\begin{remark}
	The choice of $\rho/2$ in (1)-(2) and $1/j$ in (3) as bounds is far from being rigid. Alternatively, we could have use, for example, $\varepsilon\rho$ where $0<\varepsilon<1$ in (1)-(2) and $1/2^j$ in (3).
\end{remark}
\begin{proof}	
	Let $g=\varphi\circ f\circ\hat{\varphi}^{-1}=q\circ \exp$ with $q(z)=z^d+b_{d-1}z^{d-1}+...+b_1 z+b_0$, and analogously $g_u=\varphi_u\circ f\circ\hat{\varphi}_u^{-1}=q_u\circ \exp$ with $q_u(z)=z^d+b_{d-1}^u z^{d-1}+...+b_1^u z+b_0^u$ and $g_0=f_0$.
	
	Restrict to $\rho>t'$.
	
	\textbf{(1)-(2)} From Lemma~\ref{lmm:complement_of_big_disk_under_preimage} we know that if $\rho$ is big enough and $SV(g)\subset D_\rho$, then $q^{-1}(\mathbb{D}_r(0))\subset\mathbb{D}_r(0)$ for $r\geq\rho$.
	
	Let $\abs{w}<\max_i \abs{a_{i(N_i+1)}}+1$ and $g(z)=w$. Then from the previous paragraph follows that $\abs{\exp z}=\abs{q^{-1}(w)}<\max_i \abs{a_{i(N_i+1)}}+1$. If $F(t)$ is the potential of $a_{i{(N_i+1)}}$ for which the maximum is realized, then we have
	
	$$\Re z=\log\abs{q^{-1}(w)}<\log\abs{w}<\log\left(\max_i \abs{a_{i(N_i+1)}}+1\right)<$$
	$$\log\left(\left(F(t)+2\pi \abs{s_{i(N_i+1)}}+1\right) +1\right)=\log\left(e^{dt}+2\pi \abs{s_{i(N_i+1)}}+1\right)=$$
	$$dt+\log\left(1+\frac{2\pi\abs{s_{i(N_i+1)}}+1}{F(t)+1}\right).$$
	
	Since the external address is exponentially bounded, we may assume that the absolute value of the second summand is less that $1$. At the same time $\rho$ is chosen equal to $\frac{t_k+t_{k+1}}{2}$, where $t_k,t_{k+1}$ are the consecutive elements of $\mathcal{P}$ and $t_{k+1}>t$. This means that $t\leq t_k$. If needed, increase $\rho$ so that $t_k>4$ and $t_k/t_{k+1}<1/4d$. Thus $$dt\leq dt_k=\frac{t_k+4dt_k -t_k}{4}<\frac{t_k+t_{k+1}-4}{4}<\frac{\rho}{2}-1.$$ Hence $\Re z<\rho/2$ if $\rho$ is sufficiently big.
	
	(3) First, since $\varphi_0=c^{-1}$, we have $\hat{\varphi}_0=\id$. Hence $\hat{\varphi}_0(a_{ij})=a_{ij}$ for all $a_{ij}\in P_f$.
	
	Consider some $a_{ij}\notin D_{\rho}$. We have the equality
	
	$$q_u\circ\exp\left(\hat{\varphi}_u(a_{ij})\right)=\varphi_{u}(a_{i(j+1)}).$$
	
	From the asymptotic formula~\ref{eqn:as_formula} we know that for $\rho$ big enough we have $\abs{a_{ij}}^{2d+1}+1\leq\abs{a_{i(j+1)}}$, and hence $\rho^{2d+1}+1<\abs{a_{i(j+1)}}$. Since $\abs{\varphi_{u}(a_{i(j+1)})-a_{i(j+1)}}<1$, we have $\rho^{2d+1}<\abs{\varphi_{u}(a_{i(j+1)})}$, and from Lemma~\ref{lmm:complement_of_big_disk_under_preimage} we obtain
	
	$$\left|\exp(\hat{\varphi}_u(a_{ij}))\right|^{1/2}>\rho.$$
	
	Now, let $$\varphi_{u}(a_{i(j+1)})-a_{i(j+1)}=\delta_u$$ with $\abs{\delta_u}<1$, or equivalently $$g_u\left(\hat{\varphi}_u(a_{ij})\right)=g_0\left(\hat{\varphi}_0(a_{ij})\right)+\delta_u=f(a_{ij})+\delta_u.$$
	
	From Lemma~\ref{lmm:bounds_of_coefficients_through_SV} we know that the coefficients of $q_u$ satisfy $\abs{b_k^u}<L\rho^\frac{d-k}{d}$. Hence
	
	$$g_u(\hat{\varphi}_u(a_{ij}))=f(a_{ij})+\delta_u \iff$$
	$$e^{d\hat{\varphi}_u(a_{ij})}\left(1+\frac{b_{d-1}^u}{\exp(\hat{\varphi}_u(a_{ij}))}+...+\frac{b_0^u}{\exp(d\hat{\varphi}_u(a_{ij}))}\right)=$$
	$$e^{da_{ij}}\left(1+\frac{b_{d-1}^0}{\exp(a_{ij})}+...+\frac{b_0^0}{\exp(da_{ij})}+\frac{\delta_u}{\exp(da_{ij})}\right)$$	
	$$\iff$$
	$$e^{d\hat{\varphi}_u(a_{ij})}\left(1+O\left(\frac{1}{\abs{\exp(\hat{\varphi}_u(a_{ij}))}^{1/2}}\right)\right)=e^{da_{ij}}\left(1+O\left(\frac{1}{\abs{\exp(a_{ij})}^{1/2}}\right)\right).$$
	
	Hence, after taking logarithm of both sides, we get
	$$\hat{\varphi}_u(a_{ij})-a_{ij}-\frac{2\pi i n_{ij}^u}{d}=O\left(\frac{1}{\abs{\exp(\hat{\varphi}_u(a_{ij}))}^{1/2}}\right)+O\left(\frac{1}{\abs{\exp(a_{ij})}^{1/2}}\right),$$
	where $n_{ij}^u\in\mathbb{Z}$. Furthermore, since $\abs{\exp(\hat{\varphi}_u(a_{ij}))}^{1/2}>\rho$, by increasing $\rho$ we can make the right hand side of the last expression arbitrarily small, in particular to make its absolute value less than $\pi i/d$ for all $u$. But then, since $\hat{\varphi}_0(a_{ij})=a_{ij}$, from the continuity of $\hat{\varphi}_u$ follows that $n_{ij}^u=0$ for all $j>N_i$. In particular, for all $\rho$ big enough holds
	$$\abs{\hat{\varphi}_u(a_{ij})-a_{ij}}<1.$$
	But then
	$$O\left(\frac{1}{\abs{\exp(\hat{\varphi}_u(a_{ij}))}^{1/2}}\right)=O\left(\frac{1}{\abs{\exp(a_{ij})}^{1/2}}\right)$$
	and
	$$\hat{\varphi}_u(a_{ij})-a_{ij}=O\left(\frac{1}{\abs{\exp(a_{ij})}^{1/2}}\right)=O\left(\exp\left(-\frac{\Re a_{ij}}{2}\right)\right)=O\left(\exp\left(-\frac{t_{ij}}{2}\right)\right).$$
	
	The expression on the right tends to $0$ much faster than the sequence $\{1/j\}_{j=1}^\infty$. So, after possibly increasing $\rho$, for $a_{ij}\notin D_\rho$ we have 
	
	$$\abs{\hat{\varphi}_u(a_{ij})-a_{ij}}<\frac{1}{j}.$$			
\end{proof}

Note that in the proof of Proposition~\ref{prp:good_big_disk_around_origin} (1)-(2) we showed even a bit more.

\begin{lmm}[Preimages of big disks]
	\label{lmm:preimages_of_big_disks}
	Let $\rho=(t_n+t_{n+1})/2\in\mathcal{P}'\cap [k_1,\infty]$, and let $g\in\mathcal{N}_d$ be such that $\SV(g)\subset D_\rho$.
	
	Then if $\abs{g(z)}<\max_i\abs{a_{i(N_i +1)}}+1$, we have
	$$\Re z< (d+1)t_n.$$
\end{lmm}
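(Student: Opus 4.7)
The plan is to follow the same computation as in the proof of Proposition~\ref{prp:good_big_disk_around_origin}~(1)-(2), but to bookkeep the estimate more carefully in terms of $t_n$ rather than absorb everything into $\rho/2$. The essential mechanism is already present in the previous proof; the lemma only repackages its sharper intermediate bound.

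First, I would apply Lemma~\ref{lmm:complement_of_big_disk_under_preimage} (exactly as in the proof of Proposition~\ref{prp:good_big_disk_around_origin}): after possibly increasing $k_1$, the hypothesis $\SV(g)\subset D_\rho$ implies $q^{-1}(\mathbb{D}_r(0))\subset \mathbb{D}_r(0)$ for all $r\geq\rho$. Since $|g(z)|<\max_i|a_{i(N_i+1)}|+1$, applying this with $r=\max_i|a_{i(N_i+1)}|+1$ yields
\[
|\exp z|=|q^{-1}(g(z))|<\max_i|a_{i(N_i+1)}|+1,
\]
so $\Re z<\log\bigl(\max_i|a_{i(N_i+1)}|+1\bigr)$.

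Next I would estimate the right-hand side using the asymptotic formula~\ref{eqn:as_formula}. Fix the index $i^*$ realizing the maximum, and let $\tau$ be the potential of $a_{i^*N_{i^*}}$, so that the potential of $a_{i^*(N_{i^*}+1)}$ equals $F(\tau)=e^{d\tau}-1$. By the definition of $N_{i^*}$ and of $\rho=(t_n+t_{n+1})/2\in\mathcal{P}'$, we have $a_{i^*N_{i^*}}\in D_\rho$, hence $\tau\leq t_n$. The asymptotic formula gives $|a_{i^*(N_{i^*}+1)}|\leq F(\tau)+2\pi|s_{i^*(N_{i^*}+1)}|/d+O(1)$, and then
\[
\log\bigl(\max_i|a_{i(N_i+1)}|+1\bigr)\leq d\tau+\log\!\left(1+\frac{2\pi|s_{i^*(N_{i^*}+1)}|/d+O(1)}{F(\tau)+1}\right).
\]
Since the external addresses $\underline{s}_i$ are exponentially bounded, the second term is bounded by an absolute constant (in fact tends to $0$ with $\tau$) once $k_1$ is enlarged; we may therefore make it less than $1$.

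Combining these, $\Re z< d\tau+1\leq dt_n+1$. Finally, by enlarging $k_1$ once more so that $t_n\geq k_1>1$ whenever $\rho\in\mathcal{P}'\cap[k_1,\infty]$, we obtain $dt_n+1\leq (d+1)t_n$, which is the claimed bound. The only nontrivial point is to verify that the two enlargements of $k_1$ (one to make Lemma~\ref{lmm:complement_of_big_disk_under_preimage} applicable, one to absorb the $+1$ and the logarithmic correction) are compatible with the $k_1$ fixed in Proposition~\ref{prp:good_big_disk_around_origin}; this is automatic since $k_1$ can be taken as the maximum of all finitely many thresholds required.
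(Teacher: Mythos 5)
Your proof is correct and follows essentially the same route as the paper: the paper's entire proof of this lemma is the remark that the bound was already established as an intermediate step in the proof of Proposition~\ref{prp:good_big_disk_around_origin}(1)--(2), and you simply reconstruct that intermediate computation, getting $\Re z < d\tau+1 \le dt_n+1 \le (d+1)t_n$ via Lemma~\ref{lmm:complement_of_big_disk_under_preimage}, the asymptotic formula~\ref{eqn:as_formula}, and Lemma~\ref{lmm:good_behavior_of_f_0}(3) to ensure $\tau\le t_n$.
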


\subsubsection{No combinatorics near $\infty$ (almost)}

Proposition~\ref{prp:good_big_disk_around_origin} is supposed to estimate the positions of the marked points $\hat{\varphi}_u(P_f)$ when we know those of $\varphi_u(P_f)$. In particular, we obtain good enough estimates for the points $\hat{\varphi}_u(a_{ij})$ when $a_{ij}$ are outside of $D_\rho$. Nevertheless, when $a_{ij}\in D_\rho$, we only get an upper bound for the real parts of $\hat{\varphi}_u(a_{ij})$ though for the sake of invariance we want $\hat{\varphi}_u(a_{ij})\in D_\rho$. Hence we are also interested in bounds for the imaginary part and a lower bound for the real part. Proposition~\ref{prp:constant_homotopy_near_infinity} together with Theorem~\ref{thm:homotopy_type_under_pullback} will give an estimate on ``how many times'' $\varphi_u(R_{ij})$ ``wraps'' around $\varphi_u(\mathcal{O}_{ij})$ whence we get bounds for the imaginary part of $\hat{\varphi}_u(a_{ij})$. More precisely, Proposition~\ref{prp:constant_homotopy_near_infinity} says that $\varphi_u(R_{ij})$ generate a bounded amount of ``twists'' if under $\varphi_u$ the points $a_{ij}$ do not move much and do not rotate around each other.

Now, we want to prove that if the marked points near $\infty$ do not move ``much'' under $\varphi_u$, then the corresponding images under $\varphi_u$ of the ray tails $R_{ij}$ with big indices have uniformly bounded homotopy types.

\begin{prp}[No combinatorics near $\infty$]
	\label{prp:constant_homotopy_near_infinity}
	If $P_f$ has finitely many non-trivial clusters, then there exist constants $k_2>t'$ and $C>0$ such that the following statement holds.
	
	If $\rho\in\mathcal{P}'\cap[k_2,\infty]$, and $\varphi_u, u\in[0,1]$ is an isotopy of $\id$-type maps satisfying:
	\begin{enumerate}
		\item $\varphi_0|_{\mathbb{C}\setminus\mathbb{D}_\rho}=\id$,		
		\item $\varphi_u(a_{ij})\in D_\rho$ for $j\leq N_i$,
		\item $\abs{\varphi_u(a_{ij})-a_{ij}}<1/j$ for $j>N_i$,
	\end{enumerate}
	then for every $u$ and every $j>N_i$ we have $$\abs{W_{ij}^{\varphi_u}}<C.$$
\end{prp}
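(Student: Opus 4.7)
My plan is to reduce the bound to a computation at $u=0$ via an isotopy-invariance argument, and then bound the leg word at $u=0$ using the tame geometry of the standard spider far from the origin together with the finite-cluster hypothesis.

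I would begin by choosing $k_2>t'$ large enough that, for every $a_{kl}$ with potential $t_{kl}>k_2$: the cluster of $a_{kl}$ is trivial (possible because there are only finitely many non-trivial clusters); $\Re a_{kl}>t_{kl}-1/2$ and the real part of $R_{kl}$ is strictly increasing along the ray (by the asymptotic formula~\ref{eqn:as_formula} and Lemma~\ref{lmm:monotonicity_of_rays}); and the exponential error $O(e^{-t_{kl}/2})$ in the asymptotic formula is much smaller than the minimum height separation $2\pi/d$ between distinct tracts. Combined with Lemma~\ref{lmm:good_behavior_of_f_0}, this forces $R_{ij}\subset\{\Re z>\rho\}$, and in particular $R_{ij}\cap D_\rho=\emptyset$, for every $\rho\in\mathcal{P}'\cap[k_2,\infty]$ and every $j>N_i$.

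Next comes isotopy invariance. Since $R_{ij}$ meets $\mathcal{O}_{ij}$ only at its initial point $a_{ij}$ and $\varphi_u$ is a homeomorphism for every $u$, the curve $\varphi_u(R_{ij})$ meets $\varphi_u(\mathcal{O}_{ij})$ only at $\varphi_u(a_{ij})$ throughout the isotopy; identifying the free groups $F(\varphi_u(\mathcal{O}_{ij}))$ canonically by the indexing $(k,l)$, the element $W_{ij}^{\varphi_u}$ is locally constant in $u$, hence constant. It therefore suffices to bound $|W_{ij}^{\varphi_0}|$ uniformly in $\rho$, $i$, and $j$. At $u=0$ condition (1) gives $\varphi_0(R_{ij})=R_{ij}$, and $\varphi_0(\mathcal{O}_{ij})$ splits into exterior points $a_{kl}$ with $l>N_k$ (at their standard positions) and interior points in $D_\rho$ (for $l\leq N_k$). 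Since both $R_{ij}$ and the horizontal reference arc from $a_{ij}$ to $+\infty$ used in the leg-word construction of \cite[Subsection~6.1]{IDTT1} lie in $\{\Re z>\rho\}\subset\mathbb{C}\setminus D_\rho$, the loop formed by closing $R_{ij}$ against the reference arc can be realized entirely outside $D_\rho$, so its class lies in the image of $\pi_1(\{\Re z>\rho\}\setminus V^{\mathrm{ext}})$, which is the subgroup generated by the exterior letters; after free reduction $W_{ij}^{\varphi_0}$ becomes a word in the exterior generators only.

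It remains to bound this restricted word by a constant. For exterior $a_{kl}$ with $s_{kl,0}\neq s_{ij,0}$ the horizontal ray defining its generator sits at a height separated from the oscillation band of $R_{ij}$ by more than $2\pi/d$, so there are no crossings. For exterior $a_{kl}$ with $s_{kl,0}=s_{ij,0}$ and $t_{kl}<t_{ij}$ the point lies strictly to the left of $a_{ij}$ while the closed-up loop sits in $\{\Re z\geq\Re a_{ij}\}$ by monotonicity, so by the same half-plane argument as for the interior points it contributes nothing. The only source of nontrivial generators is same-tract exterior $a_{kl}$ with $t_{kl}>t_{ij}$: the constraint $(k,l)\leq(i,j)$ combined with the super-exponential growth of $F$ forces $l$ into a window $(j-N^*,j]$ whose size $N^*$ depends only on $T_1,\ldots,T_m$ and $m$, and each such point contributes $O(1)$ generators because $\Im R_{ij}$ oscillates within an exponentially decaying tube around its asymptotic height. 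This gives $|W_{ij}^{\varphi_0}|\leq C$ for a constant $C$ independent of all relevant parameters. The main obstacle I expect is verifying in the specific generator setup of \cite[Subsection~6.1]{IDTT1} that the subgroup inclusion really kills the interior and left-of-$a_{ij}$ generators after free reduction; the geometric picture is clean, but the algebraic bookkeeping requires care.
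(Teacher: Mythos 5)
Your proof has a genuine gap at its central step, the isotopy-invariance claim that $W_{ij}^{\varphi_u}$ is constant in $u$. The leg homotopy word $W$ is not a topological invariant of the disjoint pair $(V,\gamma)$; it is a \emph{normal form} computed relative to a fixed geometric convention (straight horizontal generators of $\pi_1(\mathbb{H}_r\setminus V)$, cf.\ \cite[Subsection~6.1]{IDTT1}). That the curve never collides with the marked points only guarantees that the homotopy class of $\gamma$ in the moving complement is preserved; it does \emph{not} guarantee that the coordinates of that class with respect to the moving generating set are preserved. Whenever one marked point crosses the horizontal ray issuing from another, the corresponding generator gets conjugated and $W$ jumps. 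Such crossings are not excluded by conditions (1)--(3): inside $D_\rho$ the interior points may reorder their imaginary parts freely, and even among exterior points in the same tract the vertical separation is $O(e^{-t_{kl}/2})$, which is eventually much smaller than the permitted displacement $1/l$, so their horizontal rays can also be crossed. (A toy model: two points $\{0,i\}$ with $\gamma$ the positive real ray; a $2\pi$-rotation of a small disk around the origin returns both points to their original positions without any collision, yet drags $\gamma$ once around $i$, changing $W$ from trivial to $[i]^{\pm1}$.)

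There is also an internal red flag you should have noticed: were $W_{ij}^{\varphi_u}$ really constant in $u$, your entire final paragraph would be superfluous. At $u=0$ condition (1) gives $\varphi_0(R_{ij})=R_{ij}$, which by Lemma~\ref{lmm:monotonicity_of_rays} has strictly increasing real part for $j>N_i$ and $\rho$ large, so $W_{ij}^{\varphi_0}$ is already trivial. A constant word would then make $C=1$ suffice and the nontrivial constant the paper produces (and uses in condition (4) of Theorem~\ref{thm:invariant_subset}) would be pointless. The paper's argument is genuinely different: it does not freeze $u$. It shows that for every $u$ the homotopy class of $\varphi_u(R_{ij})$ relative to $\varphi_u(P_f)$ still admits a representative with strictly increasing real part (this uses that the exterior points stay in their small disjoint disks and the interior points stay in $D_\rho$, to the left of $\varphi_u(a_{ij})$), then observes via formula~\ref{eqn:as_formula} that only a uniformly bounded number $M$ of points in $\varphi_u(\mathcal{O}_{ij})$ can have real part exceeding $\Re\varphi_u(a_{ij})-1$, and finally invokes \cite[Lemma~6.6]{IDTT1} and \cite[Lemma~6.8]{IDTT1} to turn this monotonicity plus counting into a uniform bound on $|W_{ij}^{\varphi_u}|$. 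Your observations about triviality of clusters near $\infty$, $R_{ij}\cap D_\rho=\emptyset$, and the role of monotonicity are all in the right direction, but they need to be applied at each time $u$, not pushed back to $u=0$.
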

\begin{proof}
	
	First, note that if $\rho$ is big enough, then the points $a_{ij}\notin D_\rho$ move under $\varphi_u$ inside of the mutually disjoint disks $\mathbb{D}_{1/j}(a_{ij})$, and the distance between any two such disks is bounded from below by $\pi/2d$. 
	
	Since for all big enough $j$ the ray tail $R_{ij}=\varphi_0(R_{ij})$ has a strictly increasing real part (Lemma~\ref{lmm:monotonicity_of_rays}), for every $u\in[0,1]$ in the homotopy class (relative $P_f$) of $\varphi_u(R_{ij})$ there is a representative with the strictly increasing real part.
	
	Further, from the asymptotic formula~\ref{eqn:as_formula} follows that there is a universal constant $M>0$ such that for every $a_{ij}\in P_f$, every isotopy $\varphi_u$ satisfying conditions of the proposition, and every $u\in[0,1]$ there is at most $M$ points $a_{kl}\in \mathcal{O}_{ij}$ with $\Re\varphi_u(a_{kl})>\Re\varphi_u(a_{ij})-1$. Hence, from \cite[Lemma~6.6]{IDTT1} and \cite[Lemma~6.8]{IDTT1} we see that there exists a universal constant $C$ such that for all $\rho$ big enough and all $j>N_i$ we have
	$\abs{W_{ij}^{\varphi_u}}<C$.	
\end{proof} 

\subsubsection{Separation of preimages in $D_\rho$}

Next definition and Proposition~\ref{prp:division_over_derivative} are suited to take care of two subjects. First, they help to control the distance between points $\hat{\varphi}_u(a_{ij})$ inside of $D_\rho$. Hence this ``blocks'' one way of escape to $\infty$ in the \tei\ space. Second, since one controls the distance between $\varphi_u(a_{ij})$ and the asymptotic value $\varphi_u(a_{10})$, one obtains a lower bound for $\Re\hat{\varphi}_u(a_{ij})$.

\begin{defn}[Maximum of the derivative]
	For $\rho=\frac{t_n+t_{n+1}}{2}\in\mathcal{P}'$, define $$M_\rho:=\sup\limits_{\substack{g\in\mathcal{N}_d \\ SV(g)\subset D_\rho}}\sup\limits_{\Re z<{(d+1) t_n}}\abs{g'(z)}.$$	
\end{defn}

Note that from Lemma~\ref{lmm:bounds_of_coefficients_through_SV} follows immediately that
\begin{equation}
	\label{eqn:M_rho}
	M_\rho<Ke^{d^3 t_n}
\end{equation}
for some $K>0$ depending only on $d$. Also note that for $\rho$ big enough and every fixed function $g=p\circ\exp\in\mathcal{N}_d$, from the representation $g'(z)=p'\circ\exp(z)\exp(z)$ of its derivative we have
$$\sup\limits_{\Re z<{(d+1) t_n}}\abs{\exp(z)}<M_\rho,$$
and
$$\sup\limits_{\abs{z}<\exp((d+1) t_n)}\abs{p'(z)}<M_\rho.$$

Next proposition is needed to estimate mutual distances between marked points in $D_\rho$. 

\begin{prp}[Separation of preimages]
	\label{prp:division_over_derivative}	
	Let $\rho\in\mathcal{P}'\cap[k_1,\infty]$, and $\varphi$ be an $\id$-type map such that $\varphi(\SV(f))\subset D_\rho$. If $x,y\in\mathbb{C}\setminus\SV(f)$ are such that $$\varphi\circ f(x),\varphi\circ f(y)\in\overline{\mathbb{D}}_{\max\{|a_{i(N_i+1)}|+1\}}(0),$$ then
	$$\abs{\hat{\varphi}(x)-\hat{\varphi}(y)}\geq\frac{\abs{\varphi\circ f(x)-\varphi\circ f(y)}}{M_\rho}.$$
\end{prp}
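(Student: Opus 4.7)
The plan is to reduce the statement to a Lipschitz estimate for the entire map $g:=\varphi\circ f\circ\hat{\varphi}^{-1}$, which lies in $\mathcal{N}_d$ by Theorem~\ref{thm:pullback_of_id_type}. Setting $u:=\hat{\varphi}(x)$ and $v:=\hat{\varphi}(y)$, the identity $\varphi\circ f=g\circ\hat{\varphi}$ rewrites the desired inequality as
$$|u-v|\ \geq\ \frac{|g(u)-g(v)|}{M_\rho}.$$

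First I would verify the hypotheses of Lemma~\ref{lmm:preimages_of_big_disks} for $g$. Singular values are preserved by this conjugation, so $\SV(g)=\varphi(\SV(f))\subset D_\rho$ by assumption on $\varphi$. The hypothesis $\varphi\circ f(x),\varphi\circ f(y)\in\overline{\mathbb{D}}_{\max_i|a_{i(N_i+1)}|+1}(0)$ becomes a bound on $|g(u)|$ and $|g(v)|$, which by Lemma~\ref{lmm:preimages_of_big_disks} forces $\Re u<(d+1)t_n$ and $\Re v<(d+1)t_n$, where $\rho=(t_n+t_{n+1})/2$.

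Next I would integrate $g'$ along the straight line segment from $u$ to $v$. Since the real part is an affine function of the segment parameter, every point of the segment satisfies $\Re z<(d+1)t_n$, and so the defining bound of $M_\rho$ yields $|g'(z)|\leq M_\rho$ uniformly on the segment. Therefore
$$|g(u)-g(v)|=\left|\int_0^1 g'(u+t(v-u))\,(v-u)\,dt\right|\leq M_\rho\,|u-v|,$$
which is exactly the required inequality after dividing by $M_\rho$.

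The whole argument is essentially bookkeeping. The only point deserving any care is the verification that the straight segment joining the two preimages stays inside the controlled half-plane $\{\Re z<(d+1)t_n\}$, but this is automatic from convexity of the real-part coordinate together with the endpoint bounds supplied by Lemma~\ref{lmm:preimages_of_big_disks}. I expect no serious obstacle.
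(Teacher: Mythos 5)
Your proposal is correct and follows essentially the same route as the paper: pass to $g=\varphi\circ f\circ\hat{\varphi}^{-1}$, invoke Lemma~\ref{lmm:preimages_of_big_disks} to put the two preimages in the half-plane $\{\Re z<(d+1)t_n\}$, and integrate $g'$ along the segment between them using the bound $M_\rho$. Your version is in fact slightly cleaner: the paper's statement that the path $\gamma$ satisfies ``$\abs{\gamma}\leq(d+1)t_n$'' is ambiguous notation, whereas you make explicit both that a straight segment is being used and that it stays in the half-plane by convexity of $\Re$.
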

\begin{proof}
	Let $\rho=\frac{t_n+t_{n+1}}{2}\in\mathcal{P}'$, and assume that $a_{k{(N_k+1)}}$ has the maximal potential $t$ among points $\{a_{i(N_i+1)}\}_{i=1}^m$.
	
	Let $g=\varphi\circ f\circ\hat{\varphi}^{-1}$. From Lemma~\ref{lmm:preimages_of_big_disks} we know that $\Re g^{-1}(\varphi\circ f(x))\leq{(d+1) t_n}$ if $\varphi\circ f(x)\in\overline{\mathbb{D}}_{|a_{k(N_k+1)}|+1}(0)$.
	
	Let $\gamma$ be the straight path joining $\hat{\varphi}(x)=g^{-1}(\varphi\circ f(x))$ to $\hat{\varphi}(y)=g^{-1}(\varphi\circ f(y))$. Then
	$$\abs{\varphi\circ f(x)-\varphi\circ f(y)}=\abs{\int_\gamma g'(z)dz}\leq M_\rho \abs{\hat{\varphi}(x)-\hat{\varphi}(y)}.$$	
\end{proof}

\subsection{Compact invariant subset}

We are finally ready to construct the invariant compact subset $\mathcal{C}_f$. In the first theorem we present the construction and prove the invariance. The statement that $\mathcal{C}_f$ is compact will be the content of the second theorem afterwards. 

\begin{thm}[Invariant subset]
	\label{thm:invariant_subset}
	Let $f=c\circ f_0$ be the quasiregular function defined in Section~\ref{sec:setup_and_contraction} so that $P_f$ contains only finitely many non-trivial clusters. Further, let $\rho\in\mathcal{P}'$ and $\mathcal{C}_f(\rho)\subset\mathcal{T}_f$ be the \emph{closure} of the set of points in the \tei\ space $\mathcal{T}_f$ represented by $\id$-type maps $\varphi$ for which there exists an isotopy $\varphi_u, u\in[0,1]$ \idt\ maps such that $\varphi_0=\id$, $\varphi_1=\varphi$, and the following conditions are simultaneously satisfied.
	\begin{enumerate}		
		\item (Marked points stay inside of $D_\rho$) If $j\leq N_i$,
		$$\varphi_u(a_{ij})\in D_\rho.$$
		\item (Precise asymptotics outside of $D_\rho$) If $j>N_i$, then
		$$\abs{\varphi_u(a_{ij})-a_{ij}}<1/j.$$		
		\item (Separation inside of $D_\rho$) If $j\leq N_i,l\leq N_k,ij\neq kl$, and $n=\min\{N_i+1-j, N_k+1-l\}$, then $$\abs{\varphi_u(a_{kl})-\varphi_u(a_{ij})}>\frac{\pi}{2d(M_\rho)^n}.$$	
		\item (Bounded homotopy) If $j\leq N_i$, then 		
		$$\abs{W_{ij}^{\varphi_u}}<A^{N_i+1-j}\left(\frac{(N_i+1)!}{j!}\right)^4 C$$		
		where $A$ and $C$ are the constants from Theorem~\ref{thm:homotopy_type_under_pullback} and Proposition~\ref{prp:constant_homotopy_near_infinity}, respectively.\\
	\end{enumerate}
	
	Then if $\rho\in\mathcal{P}'$ is big enough, the subset $\mathcal{C}_f(\rho)$ is well-defined, invariant under the $\sigma$-map and contains $[\id]$.	
\end{thm}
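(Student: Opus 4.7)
The plan is first to observe the trivial cases and then tackle invariance by checking conditions (1)--(4) for the pullback isotopy in a carefully chosen order to avoid circularity. Containment $[\id]\in\mathcal{C}_f(\rho)$ is witnessed by the constant isotopy $\varphi_u\equiv\id$: conditions (2) and (3) follow from Lemma~\ref{lmm:good_behavior_of_f_0}, while (1) and (4) are immediate. Since $\sigma$ is continuous and the conditions defining $\mathcal{C}_f(\rho)$ are closed, it suffices to verify invariance for a point represented by an $\id$-type map $\varphi$ admitting an isotopy $\varphi_u$ fulfilling (1)--(4) before passing to the closure. Theorem~\ref{thm:pullback_of_id_type} then produces a unique isotopy $\hat{\varphi}_u$ of $\id$-type maps with $\hat{\varphi}_0=\id$, $\hat{\varphi}_1=\hat{\varphi}$ representing $\sigma[\varphi]$, and the task reduces to verifying (1)--(4) for $\hat{\varphi}_u$.

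Condition (2) follows at once from Proposition~\ref{prp:good_big_disk_around_origin}(3), whose hypothesis $\abs{\varphi_u(a_{i(j+1)})-a_{i(j+1)}}<1/(j+1)<1$ for $j>N_i$ is supplied by (2) for $\varphi_u$. For condition (4) restricted to $j\leq N_i$ I proceed by downward induction on $j$: Theorem~\ref{thm:homotopy_type_under_pullback} gives $\abs{W_{ij}^{\hat{\varphi}_u}}<A(j+1)^4\max\{1,\abs{W_{i(j+1)}^{\varphi_u}}\}$, and substituting the bound from (4) for $\varphi_u$ at index $j+1$ (using $\abs{W_{i(j+1)}^{\varphi_u}}<C$ when $j+1>N_i$) together with the telescoping identity $(j+1)^4\cdot((N_i+1)!/(j+1)!)^4=((N_i+1)!/j!)^4$ yields exactly the asserted bound. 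In parallel, Proposition~\ref{prp:good_big_disk_around_origin}(1)--(2) controls the real part: $\Re\hat{\varphi}_u(a_{ij})<\rho/2$ for $j\leq N_i$.

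The imaginary part needed to complete (1) is the main obstacle. Writing $\exp(\hat{\varphi}_u(a_{ij}))=q_u^{-1}(\varphi_u(a_{i(j+1)}))$ for the associated entire map $g_u=q_u\circ\exp$, the modulus is bounded by the preimage of $D_\rho$ under the polynomial $q_u$, so the imaginary part is pinned modulo $2\pi$; the specific $2\pi$-sheet is forced by continuity of the isotopy starting from $\hat{\varphi}_0=\id$, and the resulting winding is quantitatively dominated by $\abs{W_{ij}^{\hat{\varphi}_u}}$. Using the bound $A^{N_i+1-j}((N_i+1)!/j!)^4C$ already established and the fact that $N_i$ grows only as an iterated logarithm of $\rho$ (since $\{t_j\}$ satisfies $t_{j+T}=F(t_j)$ with $F$ super-exponential), this factorial quantity is eventually dominated by $\rho$. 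Choosing $\rho\in\mathcal{P}'$ large enough to absorb it, together with the thresholds $k_1,k_2$ and the implicit constants of Propositions~\ref{prp:good_big_disk_around_origin}--\ref{prp:division_over_derivative}, yields $\abs{\hat{\varphi}_u(a_{ij})}<\rho$ and establishes (1). Once (1) is in hand, Proposition~\ref{prp:constant_homotopy_near_infinity} applies to $\hat{\varphi}_u$ and completes (4) for $j>N_i$.

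Finally, (3) follows from Proposition~\ref{prp:division_over_derivative} applied with $x=a_{ij}$, $y=a_{kl}$: both $\varphi_u(a_{i(j+1)})$, $\varphi_u(a_{k(l+1)})$ lie in $\overline{\mathbb{D}}_{\max_i\abs{a_{i(N_i+1)}}+1}(0)$ by (1) and (2) for $\varphi_u$, yielding $\abs{\hat{\varphi}_u(a_{ij})-\hat{\varphi}_u(a_{kl})}\geq\abs{\varphi_u(a_{i(j+1)})-\varphi_u(a_{k(l+1)})}/M_\rho$. When both $j+1\leq N_i$ and $l+1\leq N_k$, condition (3) for $\varphi_u$ at the shifted indices (with exponent $n-1$) bounds the numerator below by $\pi/(2dM_\rho^{n-1})$, and division by $M_\rho$ yields the target. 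Otherwise $n=1$ and at least one shifted point lies outside $D_\rho$ with potential exceeding $\rho$; Lemma~\ref{lmm:good_behavior_of_f_0}(3) together with the condition-(2) bound $<1/(N_i+1)$ then separate the images by a distance far exceeding $\pi/(2d)$ for $\rho$ large, so the required bound $\pi/(2dM_\rho)$ comfortably holds after division by $M_\rho$, completing invariance.
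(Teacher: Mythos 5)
Your overall strategy matches the paper's, and the verifications of (2), (3), (4) are essentially the arguments the paper uses, with (3) being spelled out more carefully than the paper bothers to. However, there is one genuine gap. You write that Theorem~\ref{thm:pullback_of_id_type} produces an isotopy $\hat{\varphi}_u$ with $\hat{\varphi}_0=\id$ while starting from $\varphi_0=\id$; this is false. If $\varphi_0=\id$, then $\hat{\varphi}_0=\hat{\id}$ is the normalized solution of the Beltrami equation with coefficient $f_0^*\mu_c$, which is \emph{not} the identity unless $c$ is already conformal. More seriously, Proposition~\ref{prp:good_big_disk_around_origin}, which you invoke repeatedly for the estimates on real parts and for the asymptotics outside $D_\rho$, explicitly hypothesizes an isotopy with $\varphi_0=c^{-1}$, not $\varphi_0=\id$. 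The paper handles this by first concatenating the given isotopy $\varphi_u$ (from $\id$ to $\varphi$) with the capture isotopy $c_u^{-1}$ (from $\id$ to $c^{-1}$), producing an isotopy $\psi_u$ with $\psi_0=c^{-1}$, $\psi_1=\varphi$, still satisfying (1)--(4) since $c_u$ has bounded support inside $D_\rho$ for $\rho$ large. Only then does $\hat{\psi}_0=\id$ hold, making $\hat{\psi}_u$ the witness needed to show $\sigma[\varphi]\in\mathcal{C}_f^\circ(\rho)$ and legitimizing the use of Proposition~\ref{prp:good_big_disk_around_origin}. Without this concatenation your derivation of the imaginary-part bound via "continuity of the isotopy starting from $\hat{\varphi}_0=\id$" is unfounded.

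A minor secondary point: your closing sentence claims that Proposition~\ref{prp:constant_homotopy_near_infinity}, applied to $\hat{\varphi}_u$ after (1) is established, "completes (4) for $j>N_i$". But condition (4) only concerns $j\leq N_i$; there is no condition-(4)-type bound demanded for $j>N_i$ (as the paper notes, the homotopy information outside $D_\rho$ is encoded implicitly in (2) and (3)). Where Proposition~\ref{prp:constant_homotopy_near_infinity} is actually needed is on the \emph{base} isotopy $\psi_u$, to supply $\abs{W_{i(N_i+1)}^{\psi_u}}<C$ for the $j=N_i$ step of verifying (4) on $\hat{\psi}_u$ --- which you do correctly use earlier, so this closing step is superfluous rather than damaging.
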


Let us give an overview of conditions (1)-(4) before the proof.

Conditions (1)-(2) say that the maps $\varphi$ have to be ``uniformly \idt'', that is, the marked points outside of a disk $D_\rho$ have precise asymptotics, while inside of $D_\rho$ we allow some more freedom.

Condition (3) tells us that the points inside of $D_\rho$ cannot come very close to each other --- it is necessary for keeping our set bounded (in the \tei\ metric). Moreover, it is needed to control the distance to the asymptotic value --- if a marked point is too close to it, then after the Thurston iteration its preimage has its real part close to $-\infty$, and this spoils condition (1).

Condition (4) takes care of the homotopy information and provides bounds for the leg homotopy words of legs with endpoints inside of $D_\rho$. Note that the analogous bounds for marked points outside of $D_\rho$ are encoded \emph{implicitly} in conditions (2) and (3).

\begin{proof}[Proof of Theorem~\ref{thm:invariant_subset}]
	After all preparations made earlier in this section, the proof is similar to the exponential case \cite[Theorem~7.1]{IDTT1}. Let $\mathcal{C}_f^\circ(\rho)\subset\mathcal{C}_f(\rho)$ be the set of points in $\mathcal{T}_f$ of which we take the closure in the statement of the theorem, that is, represented by $\id$-type maps $\varphi$ for which there exists an isotopy $\varphi_u, u\in[0,1]$ \idt\ maps such that $\varphi_0=\id$, $\varphi_1=\varphi$ and the conditions (1)-(4) are simultaneously satisfied. Since the $\sigma$-map is continuous, it is enough to prove invariance of $\mathcal{C}_f^\circ(\rho)$.
	
	First, note that for big $\rho$ the set $\mathcal{C}_f^\circ(\rho)$ contains $[c^{-1}]$: $c^{-1}$ can be joined to identity via the isotopy $c_u^{-1}$ where $c_u$ was constructed in Section~\ref{sec:setup_and_contraction}, and this isotopy satisfies the conditions (1)-(4).
	
	In view of Lemma~\ref{lmm:good_behavior_of_f_0} for $\rho\in\mathcal{P}'$ big enough the first $N_i+1$ point on each orbit $\mathcal{O}_i$ are contained in $D_\rho$, while the other points are outside. Moreover, due to the asymptotic formula~\ref{eqn:as_formula} those marked points from $D_\rho\cap P_f$ move under isotopy $\varphi_u$ only inside of $D_\rho$, while for every $a_{ij}\notin D_\rho$ the point $a_{ij}$ moves inside of a disk $D_{ij}$ of radius $1/j$, and all these disks $D_\rho$ and $\{D_{ij}\}$ are mutually disjoint and have mutual distances from each other bigger than $\pi/2d$.
	
	For all $\varphi\in\mathcal{C}_f^\circ(\rho)$ after concatenation with $c_u^{-1}$ we obtain the isotopy $\psi_u$ \idt\ maps with $\psi_0=c^{-1},\psi_1=\varphi$ and satisfying conditions (1)-(4). Note that then $\hat{\psi}_u$ is an isotopy \idt\ maps with $\hat{\psi}_0=\id$. Let $g_u(z)=\psi_u\circ f\circ\hat{\psi}_u^{-1}(z)=p_u\circ\exp(z)$. We want to prove that $\hat{\psi}_u$ satisfies each of the items (1)-(4): from this would follow that $\hat{\varphi}\in\mathcal{C}_f^\circ(\rho)$. 
	
	We prove that each of the conditions (1)-(4) for $\hat{\psi}$ follows from the conditions (1)-(4) for $\psi$. We assume that $\rho>\max\{k_1,k_2\}$.
	
	\textbf{(4)} Note that since the conditions of Proposition~\ref{prp:constant_homotopy_near_infinity} hold, for $j>N_i$ and, in particular, for $j=N_i +1$ we have $$\abs{W_{ij}^{\psi_u}}<C.$$
	
	Hence from Theorem~\ref{thm:homotopy_type_under_pullback} applied to $j\leq N_i +1$ we get $$\abs{W_{ij}^{\hat{\psi}_u}}<A(j+1)^4\abs{W_{i(j+1)}^{\psi_u}}<A(j+1)^4 A^{N_i+1-(j+1)}\left(\frac{(N_i+1)!}{(j+1)!}\right)^4 C=$$
	$$A^{N_i+1-j}\left(\frac{(N_i+1)!}{j!}\right)^4 C.$$
	
	\textbf{(1)} Without loss of generality we assume that $a_{10}$ is the asymptotic value of $f$ (i.e.\ the image of the asymptotic value of $f_0$ under $c$). We want to obtain upper and lower bounds on the real and imaginary parts of $\hat{\psi}_u(a_{ij})$ for $a_{ij}\in D_\rho$, which will give the desired estimate on $\abs{\hat{\psi}_u(a_{ij})}$.
	
	Let $j\leq N_i+1$. Then from Proposition~\ref{prp:good_big_disk_around_origin} we know that $\Re\hat{\psi}_u(a_{ij})<\rho/2$.
	
	From the estimate (3) we see that if $j\leq N_i$, then
	$$\abs{\psi_u(a_{i(j+1)})-\psi_u(a_{10})}>\frac{\pi}{2d(M_\rho)^{\max_i N_i+1}},$$
	and 
	$$\abs{p_u^{-1}(\psi_u(a_{i(j+1)}))-p_u^{-1}(\psi_u(a_{10}))}=\abs{\exp(\hat{\psi}_u(a_{ij}))-0}>\frac{\pi}{2d(M_\rho)^{\max_i N_i+2}}.$$
	Hence,
	$$\Re\hat{\psi}_u(a_{ij})>\log\left(\frac{\pi}{2d(M_\rho)^{\max_i N_i+2}}\right).$$
	If $\rho$ is big, from inequality~\ref{eqn:M_rho} follows that the right hand side of the last expression is bigger than $-\rho/2$.
	This proves that for $j\leq N_i$ we have
	$$\Re\hat{\psi}_u(a_{ij})>-\frac{\rho}{2}.$$
	
	Since $\abs{W_{i({j+1})}^{\psi_u}}<A^{N_i-j}\left(\frac{(N_i+1)!}{(j+1)!}\right)^4 C$, a preimage $p_u^{-1}(\psi_u({R}_{i(j+1)}))$ of the leg $\psi_u({R}_{i(j+1)})$ makes less than $A^{N_i+1-j}\left(\frac{(N_i+1)!}{j!}\right)^4 C$ loops around the singular value. Hence the difference between $2\pi s_{ij}/d$ and the imaginary part of $\hat{\psi}_u(a_{ij})$ will be less than $2\pi\left(A^{N_i+1-j}\left(\frac{(N_i+1)!}{j!}\right)^4 C+1\right)$. By making $\rho$ big enough we may assume that for all $i,j$ such that $j\leq N_i$ we have 
	$$\frac{2\pi \abs{s_{ij}}}{d}+2\pi \left(A^{N_i+1-j}\left(\frac{(N_i+1)!}{j!}\right)^4 C+1\right)<\frac{\rho}{2}.$$
	Hence, $\abs{\Im\hat{\psi}_u(a_{ij})}<\rho/2$.
	
	Consequently $\abs{\hat{\psi}_u(a_{ij})}=\sqrt{\abs{\Re\hat{\psi}_u(a_{ij})}^2+\abs{\Im\hat{\psi}_u(a_{ij})}^2}<\rho/\sqrt{2}<\rho$.
	
	\textbf{(3)} Follows directly from Proposition~\ref{prp:division_over_derivative}.
	
	\textbf{(2)} Follows directly from Proposition~\ref{prp:good_big_disk_around_origin}.	
\end{proof}

\begin{remark}
	Note that every point of $\mathcal{C}_f$ is evidently asymptotically conformal.
\end{remark}

Now we are ready to explain why a similar construction, where marked points $a_{ij}$ with $j>N_i$ move under $\varphi_u$ inside of small disjoint disks around $a_{ij}$, is generally impossible when we have infinitely many non-trivial clusters. Consider the function $f$ which has three singular orbits $\{a_{ij}\}, i=\{1,2,3\}$ such that the starting points $a_{10},a_{20},a_{30}$ have the same potential. Moreover, assume that $a_{10}$ has external address $\underline{s}_1=\{0\,0\,0\,0\,0\,0\,0\,0...\}$, whereas $\underline{s}_2=\{1\,0\,1\,0\,1\,0\,1\,0...\}$ and $\underline{s}_3=\{0\,1\,0\,1\,0\,1\,0\,1...\}$. Then for every $j\geq 0$, the point $a_{1j}$ belongs to a cluster with either $a_{2j}$ or $a_{3j}$. For $j>N_1+1$, if $a_{i(j+1)}$ is moving under $\varphi_u$ in the disk around $a_{i(j+1)}$ with radius $r_{i(j+1)}<1$, the point $a_{ij}$ is moving under $\hat{\varphi}_u$ approximately in the disk around $a_{ij}$ with radius $r_{ij}=r_{i(j+1)}/F'(t_{ij})$ where $t_{ij}$ is the potential of $a_{ij}$ (see computations in Proposition~\ref{prp:good_big_disk_around_origin}). Similar considerations apply to $a_{i(j+1)},a_{i(j+2)},...$ Hence, the invariance of these disks under $\sigma$ would imply that $r_{ij}=0$, which is clearly impossible unless $f$ is already entire.

Next, we prove that $\mathcal{C}_f(\rho)$ is compact.

\begin{thm}[Compactness]
	\label{thm:compact_subset}
	$\mathcal{C}_f(\rho)$ is a compact subset of $\mathcal{T}_f$.
\end{thm}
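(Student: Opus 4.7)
The plan is to prove sequential compactness. Since $\mathcal{C}_f(\rho)$ is by construction a closure in the Teichm\"uller metric space $\mathcal{T}_f$, it is closed, so it suffices to show that every sequence $\{[\varphi_k]\}$ in the dense subset $\mathcal{C}_f^\circ(\rho)$ has a subsequence converging in $\mathcal{T}_f$. Fix representatives $\varphi_k$ that are \idt\ and satisfy conditions (1)--(4) of Theorem~\ref{thm:invariant_subset}.

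First I would extract limits of the marked points by a diagonal argument. By (1) and (2), each $\varphi_k(a_{ij})$ lies in the compact set $\overline{D}_\rho$ for $j\leq N_i$ and in $\overline{\mathbb{D}}_{1/j}(a_{ij})$ for $j>N_i$. Enumerating $P_f$ and diagonalizing produces a subsequence with $\varphi_k(a_{ij})\to b_{ij}$ for every $(i,j)$. Next I would stabilize the combinatorics. Condition (4) bounds $\abs{W_{ij}^{\varphi_k}}$ for $j\leq N_i$, while Proposition~\ref{prp:constant_homotopy_near_infinity} bounds it by $C$ for $j>N_i$. Viewing $W_{ij}^{\varphi_k}$ as an element of the free group on the finite set $\mathcal{O}_{ij}$ via the natural bijection induced by $\varphi_k$, only finitely many values are possible, so a further diagonal extraction yields a subsequence along which $W_{ij}^{\varphi_k}=W_{ij}$ is eventually constant for each $(i,j)$.

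By Theorem~\ref{thm:W_define_teich_point}, the data $(\{b_{ij}\},\{W_{ij}\})$ determine at most one Teichm\"uller class $[\varphi]$; existence of a representative \idt\ map $\varphi$ realizing this data is straightforward because condition (3) passes to the limit (so the $b_{ij}$ remain pairwise distinct and separated from each other and from the asymptotic value), and a spider with prescribed homotopy words can be constructed directly by concatenating curves in $\mathbb{C}$.

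The main obstacle is the final step: upgrading pointwise convergence of marked points together with stabilization of homotopy words to convergence $[\varphi_k]\to[\varphi]$ in the Teichm\"uller metric. I would construct, for each large $k$, a \qc\ self-homeomorphism $\psi_k$ of $\mathbb{C}$ isotopic to $\varphi\circ\varphi_k^{-1}$ relative $\varphi_k(P_f)$ with maximal dilatation $K(\psi_k)\to 1$. Inside $D_\rho$ there are only finitely many marked points, uniformly separated by condition (3), and pointwise convergence allows a smooth compactly supported perturbation with dilatation $1+o(1)$. Outside $D_\rho$, both $\varphi_k(a_{ij})$ and $b_{ij}$ lie in $\overline{\mathbb{D}}_{1/j}(a_{ij})$, and these small disks are contained in larger mutually disjoint disks $\mathbb{D}_{\pi/(4d)}(a_{ij})$ coming from the tract structure of $f$. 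Supporting the displacement inside the larger disks, the contribution of each leg has dilatation $1+o(1)$, and the supremum over all legs tends to $1$ by combining pointwise convergence for the finitely many small $j$ with the a priori bound $\abs{\varphi_k(a_{ij})-b_{ij}}\leq 2/j$ for large $j$. Since the leg homotopy words already agree after extraction, no additional global twisting is required, so $\psi_k$ is genuinely isotopic to $\varphi\circ\varphi_k^{-1}$ relative $\varphi_k(P_f)$, and $d_{\mathcal{T}_f}([\varphi_k],[\varphi])\leq \log K(\psi_k)\to 0$.
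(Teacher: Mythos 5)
Your proof is correct and captures the same underlying argument that the paper defers to the companion work \cite[Theorem~7.3]{IDTT1}: boundedness of marked-point positions plus boundedness of homotopy words (via condition (4) and Proposition~\ref{prp:constant_homotopy_near_infinity}), diagonal extraction, identification of the limit class via Theorem~\ref{thm:W_define_teich_point}, and then an explicit construction of nearly conformal comparison maps. The one visible difference is presentational: the paper first renormalizes the isotopies $\varphi_u^n$ into two phases (first fixing $D_\rho$, then fixing $\mathbb{C}\setminus D_\rho$) so as to split the problem into a ``finitely many points in $D_\rho$'' case and an ``infinitely many points near $\infty$'' case, which makes the citation to [IDTT1] verbatim after the index relabeling $(i,j)\mapsto i+mj-1$; you instead build the comparison map $\psi_k$ in a single pass, treating the two regions simultaneously. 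This is fine and arguably more self-contained.

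One place where you should be slightly more careful is the final assertion that $\psi_k$ is isotopic to $\varphi\circ\varphi_k^{-1}$ rel $\varphi_k(P_f)$ ``because the leg homotopy words agree.'' The word $W(V,\gamma)$ is tied to a particular choice of generators of $\pi_1(\mathbb{H}_r\setminus V)$ which depend on the positions in $V$ (through the linear order of real parts of the points); a small displacement of marked points does not \emph{automatically} preserve the word unless you arrange the perturbation so that it neither crosses any spider leg nor permutes the relevant orderings. After your extraction this can be ensured (for $j\leq N_i$ use that $b_{ij}$ are finitely many and separated, and break possible real-part ties by a further arbitrarily small adjustment absorbed into the dilatation estimate; for $j>N_i$ the disks $\mathbb{D}_{1/j}(a_{ij})$ are already well-separated), but the claim deserves a sentence of justification rather than being stated as automatic. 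Also note the Teichm\"uller distance should carry the usual factor $\tfrac12$, i.e.\ $d_{\mathcal{T}_f}([\varphi_k],[\varphi])\leq\tfrac12\log K(\psi_k)$; this does not affect the conclusion.
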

\begin{proof}
	We will prove that the set $\mathcal{C}_f^\circ(\rho)$ from the proof of Theorem~\ref{thm:invariant_subset} is pre-compact, or equivalently, since $\mathcal{T}_f$ is a metric space in the \tei\ metric, that every sequence $\{[\varphi^n]\}\subset\mathcal{C}_f^\circ(\rho)$ has a subsequence that converges to a point $[\varphi]\in\mathcal{T}_f$.
	
	Since each $[\varphi^n]\in\mathcal{C}_f^\circ(\rho)$, we can assume that every $\varphi^n$ is \idt\ and $\varphi_u^n$ is an isotopy with $\varphi_0^n=\id, \varphi_1^n=\varphi^n$ satisfying conditions (1)-(4) of Theorem~\ref{thm:invariant_subset}.
	
	Recall from the proof of Theorem~\ref{thm:invariant_subset} that for all $\rho\in\mathcal{P}'$ big enough all marked points move under an isotopy $\varphi_u\in\mathcal{C}_f^\circ(\rho)$ inside of mutually disjoint disks $D_\rho$ and $\{D_{ij}\}$ with radii of $D_{ij}$ tending to $0$ as $j\to\infty$. Hence we can assume that when $u\in[0,1/2]$, we have $\varphi^n_u|_{D_\rho}=\id$ for every $n$, and when $u\in[1/2,1]$, we have $\varphi^n_u|_{\cup_{k={N+1}}^\infty D_k}=\id$ for every $n$, that is, first the marked points inside of $D_\rho$ do not move, and afterwards do not move the marked point outside of $D_\rho$. It is clear from the definition of $W_{ij}^\varphi$ (see also \cite{IDTT1}) that in this case for every $a_{ij}\in D_\rho$ $W_{ij}^\varphi$ is bounded by a constant not depending on a particular choice of $\varphi$.  Thus, we simply need to prove the theorem in two separate cases: when only the marked points outside of $D_\rho$ move and when only the marked points inside of $D_\rho$ move. 
	
	From this point, after relabeling of every index $i,j$ by $i+mj-1$ and the replacement of \cite[Theorem~6.28]{IDTT1} by Theorem~\ref{thm:W_define_teich_point}, the proof repeats in every single detail the proof of \cite[Theorem~7.3]{IDTT1}.
\end{proof}

\subsection{Proof of Classification Theorem}

We are finally ready to prove the Classification Theorem~\ref{thm:main_thm}.

\begin{proof}[Proof of Classification Theorem~\ref{thm:main_thm}]
	
	The proof is analogous to the one of \cite[Theorem~1.1]{IDTT1}. Without loss of generality we may assume that all singular values of $f_0$ do not escape (such functions exist in the parameter space of $f_0$). Then due to Theorem~\ref{thm:as_formula} in $I(f_0)$ there are points $\{a_{i0}\}_{i=1}^m$ so that they escape on rays with starting potentials $\{T_i\}_{i=1}^m$ and external addresses $\{\underline{s}_i=(s_{i0},s_{i1},...)\}$.
	
	Let $f=c\circ f_0$ be the captured exponential function constructed as in Section~\ref{sec:setup_and_contraction}: i.e.\ with the singular values $\{a_{i0}\}_{i=1}^m$ escaping as under $f_0$. From Theorems~\ref{thm:invariant_subset} and \ref{thm:compact_subset} we know that there is a non-empty compact and invariant under $\sigma$ subset $\mathcal{C}_f=\mathcal{C}_f(\rho)\subset\mathcal{T}_f$ such that all its elements are asymptotically conformal. Next, from Theorem~\ref{thm:sigma_strictly_contracting} follows that $\sigma$ is strictly contracting in the \tei\ metric on $\mathcal{C}_f$.
	
	Thus, we have a strictly contracting map $\sigma$ on a compact complete metric space $\mathcal{C}_f$. It follows that $\mathcal{C}_f$ contains a fixed point $[\varphi_0]$. For the details that $\varphi_0\circ f\circ\hat{\varphi}_0^{-1}$ is the entire required function and it is unique we send the reader to the proof of \cite[Theorem~1.1]{IDTT1}. The proof is identical except that we have more than one singular orbit (modifications are trivial).
\end{proof}

\section{Appendix A}
\label{sec:app_A}

In this appendix we prove some properties of polynomials and of their compositions with the exponential, mainly those connecting the magnitude of the set of singular (or critical) values to the size of the coefficients. 

\begin{thm}[Critical values bound critical points]
	\label{thm:crit_pts_bounded}
	Fix some integer $d>1$. There exists a universal constant $M>0$ such that if $p$ is a monic polynomial of degree $d$ satisfying $p(0)=0$, and the critical values of $p$ are in the disk $\mathbb{D}_\rho(0)$ for some $\rho>0$, then its critical points are in $\mathbb{D}_{M\sqrt[d]{\rho}}(0)$.
\end{thm}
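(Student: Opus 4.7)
The strategy is a scaling-compactness argument. Let $R:=\max\{|c|:c\text{ is a critical point of }p\}$; we want to show $R\le M\rho^{1/d}$ for some universal $M=M(d)$.

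First I would renormalize. Setting $q(z):=p(Rz)/R^{d}$ produces a monic polynomial of degree $d$ with $q(0)=0$, whose critical points are $c_i/R$ (hence all lie in $\overline{\mathbb D}$ with at least one on the unit circle), and whose critical values satisfy
$$\max_i |q(c_i/R)| = \frac{\max_i|p(c_i)|}{R^{d}} \le \frac{\rho}{R^{d}}.$$
Thus it suffices to produce a constant $K=K(d)>0$ such that every monic polynomial $q$ of degree $d$ with $q(0)=0$ and with critical points in $\overline{\mathbb D}$, at least one on $\partial\mathbb D$, has $\max_i|q(c_i)|\ge K$. Then $\rho/R^d\ge K$ yields $R\le K^{-1/d}\rho^{1/d}$, so $M:=K^{-1/d}$ works.

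To get $K$, I would identify the family $\mathcal F_d$ of such $q$ with the corresponding tuple of coefficients $(a_1,\dots,a_{d-1})\in\mathbb C^{d-1}$. Writing $q'(z)/d=\prod_{i=1}^{d-1}(z-c_i)$, Vieta's formulas express each $a_j$ as (a bounded multiple of) an elementary symmetric function of the $c_i$; since $|c_i|\le 1$, each $|a_j|$ is bounded by a constant depending only on $d$, so $\mathcal F_d$ is bounded. It is also closed: in a limit, the multiset of critical points converges, hence stays in $\overline{\mathbb D}$ and still has an element on $\partial\mathbb D$. Therefore $\mathcal F_d$ is compact, and the function $q\mapsto\max_i|q(c_i(q))|$ (continuous as the symmetric function $\prod_i|q(c_i)|^{1/(d-1)}$ of the roots, or directly as a continuous function of coefficients) attains its minimum on $\mathcal F_d$.

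The main step is to rule out that this minimum is zero. Suppose $q\in\mathcal F_d$ has all critical values equal to $0$. Then every critical point is a root of $q$, hence a multiple root, so the $d-1$ critical points (with multiplicities) account for $d-N$ units of multiplicity where $N$ is the number of distinct roots of $q$. This forces $N=1$, and combined with $q(0)=0$ we get $q(z)=z^{d}$; but then every critical point is $0$, contradicting $\max_i|c_i|=1$. So the minimum is strictly positive, giving the required $K(d)$.

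The only delicate point is the rigidity argument forcing $q=z^d$ from vanishing critical values, which hinges crucially on the normalization $q(0)=0$ (otherwise any polynomial $(z-a)^d$ would be a counterexample); once that is in place, compactness of $\mathcal F_d$ and the scaling automatically yield the theorem.
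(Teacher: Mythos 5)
Your proposal is correct and takes essentially the same approach as the paper: both establish the rigidity that a monic degree-$d$ polynomial fixing $0$ with all critical values equal to $0$ must be $z^d$, and both then deduce the bound by a scaling-compactness argument. The only difference is packaging — you normalize explicitly by $R=\max_i|c_i|$ to land in a compact family and minimize a continuous functional, whereas the paper works with the scale-invariant ratio $k_C=\max_i|p_C(c_i)|/\max_i|c_i|^d$ and derives a contradiction from a minimizing sequence rescaled by $c_1^n$; these are the same argument.
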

\begin{proof}
	If $p$ has all its critical points equal to $0$, then $p(z)=z^d$.
	
	Assume that $p$ has all its critical values equal to $0$. Then each critical point $c_i$ is a root of $p$ of order $\ord(c_i)+1$ where $\ord(c_i)$ is the order of critical point (or equivalently order of the root $c_i$ of $p'$). Since the degrees of $p$ and $p'$ differ by $1$, it is possible only if there is only one critical point $c$. But then $p(z)=(z-c)^d - (-c)^d$. Since $p(c)=0$, it follows that $c=0$. So $p(z)=z^d$ is the only monic polynomial satisfying $p(0)=0$ with all critical values equal to $0$.
	
	From now assume that neither all critical points nor all critical values are zero. Let $p'_C(z)=d(z-c_1)\cdot\dots\cdot(z-c_{d-1})$, where $C=(c_1,\dots,c_{d-1})\in\mathbb{C}^{d-1}$, and let $Q(z,C):=p_C(z)$. Then $Q:\mathbb{C}^d\to\mathbb{C}$ is a homogeneous polynomial of degree $d$ in variables $z, c_1,\dots,c_{d-1}$.
	
	For every $C\in \mathbb{C}^{d-1}\setminus\{0\}$ let $k_C:=\frac{\max\limits_i \abs{p_C(c_i)}}{\max\limits_i \abs{c_i}^d}$. From the considerations above follows that $k_C>0$ for every $C\in \mathbb{C}^{d-1}\setminus\{0\}$. On the other hand $k_C$ is a homogeneous function, hence it can be viewed as a continuous function on the complex projective space $\mathbb{CP}^{d-2}$. Due to compactness of $\mathbb{CP}^{d-2}$,  $k_C$ has a lower bound which has to be positive. Together with the degenerate case $p(z)=z^d$ the claim of the theorem follows.
\end{proof}

The following lemma provides estimates on the coefficients of a polynomial $p$ if we know the magnitude of the singular values of $p\circ\exp$.

\begin{lmm}[Singular values bound coefficients]
	\label{lmm:bounds_of_coefficients_through_SV} Fix some integer $d>0$.
	There exists a universal constant $L>0$ such that if $g=p\circ\exp$ where $p(z)=z^d+b_{d-1}z^{d-1}+...+b_1 z+b_0$ and singular values of $g$ are contained in the disk $\mathbb{D}_\rho(0)$ with $\rho$ big enough, then $\abs{b_k}<L\rho^\frac{d-k}{d}$.
\end{lmm}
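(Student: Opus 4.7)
The plan is to reduce the bound on coefficients to the bound on critical points supplied by Theorem~\ref{thm:crit_pts_bounded}, after first identifying what "singular values of $g$" means in terms of $p$. The only critical points of $g = p\circ\exp$ are preimages under $\exp$ of critical points of $p$, so the critical values of $g$ coincide with the critical values of $p$. In addition, $g$ has the asymptotic value $p(0) = b_0$. Thus, the hypothesis $\SV(g)\subset\mathbb{D}_\rho(0)$ gives simultaneously $\abs{b_0}<\rho$ and $\abs{p(c_i)}<\rho$ for every critical point $c_i$ of $p$.

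First, I would handle the coefficient $b_0$ directly: the bound $\abs{b_0}<\rho = \rho^{d/d}$ already fits the form $L\rho^{(d-k)/d}$ with $k=0$. Next, to access Theorem~\ref{thm:crit_pts_bounded}, I would apply it to $q(z) := p(z) - b_0$, which is still monic of degree $d$, satisfies $q(0) = 0$, and has the same critical points as $p$ with critical values $q(c_i) = p(c_i)-b_0$, hence $\abs{q(c_i)} < 2\rho$. Theorem~\ref{thm:crit_pts_bounded} then yields a universal constant $M$ such that
\begin{equation*}
\max_i \abs{c_i} < M(2\rho)^{1/d} = M' \rho^{1/d},
\end{equation*}
where $M'$ depends only on $d$.

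To convert a bound on critical points into a bound on coefficients, I would use the factorization
\begin{equation*}
p'(z) = d\prod_{i=1}^{d-1}(z-c_i) = d\sum_{k=0}^{d-1}(-1)^k e_k(c_1,\dots,c_{d-1})\, z^{d-1-k},
\end{equation*}
where $e_k$ is the $k$-th elementary symmetric polynomial. Comparing with $p'(z) = \sum_{j=1}^d j\, b_j z^{j-1}$ (with $b_d = 1$) gives, for $j = 1,\dots,d-1$,
\begin{equation*}
b_j = \frac{d}{j}(-1)^{d-j} e_{d-j}(c_1,\dots,c_{d-1}).
\end{equation*}
Since each $e_{d-j}$ is a sum of $\binom{d-1}{d-j}$ products of $d-j$ critical points, the bound $\abs{c_i}<M'\rho^{1/d}$ gives
\begin{equation*}
\abs{b_j} \leq \frac{d}{j}\binom{d-1}{d-j}(M')^{d-j}\,\rho^{(d-j)/d}.
\end{equation*}
Taking $L$ to be the maximum of these constants over $j=1,\dots,d-1$ together with $1$ (to cover the $b_0$ estimate) produces a universal constant depending only on $d$ for which $\abs{b_k} < L\rho^{(d-k)/d}$ holds for all $k=0,\dots,d-1$.

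There is no real obstacle here once Theorem~\ref{thm:crit_pts_bounded} is invoked; the only delicate point is correctly identifying the singular set of $g = p\circ\exp$ (including the asymptotic value $b_0$) so that one can both read off the bound on $b_0$ and shift $p$ by $-b_0$ to legitimately apply the theorem, which requires the normalization $q(0)=0$.
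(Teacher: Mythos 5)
Your proof is correct and takes a genuinely cleaner route than the paper's. The paper writes $g = q\circ E_\kappa$ with $E_\kappa(z) = e^z+\kappa$ and $q(z) = z^d + a_{d-1}z^{d-1}+\dots+a_1 z$; here $q(0)=0$ is enforced by choosing $-\kappa$ to be a root of $p$ (so the decomposition is non-unique). It then applies Theorem~\ref{thm:crit_pts_bounded} to $q$ to bound the $a_k$, and must separately estimate $\kappa$ via $|q(\kappa)| = |p(0)| < \rho$, which is the one place where "$\rho$ big enough" is actually used; the $b_k$ are then recovered from the $a_k$ and $\kappa$ by Newton's binomial expansion of $p(z)=q(z+\kappa)$. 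Your approach of setting $q(z) = p(z) - b_0$ sidesteps all of that: the critical points of $q$ are literally those of $p$, so after invoking Theorem~\ref{thm:crit_pts_bounded} (with $2\rho$ in place of $\rho$) you bound the critical points of $p$ directly, and the elementary symmetric polynomial expressions for the coefficients of $p'$ give the $b_j$ bounds in one step. This removes the $\kappa$-estimate and the binomial recombination entirely, and as a bonus your argument works for every $\rho>0$ rather than only for $\rho$ large. One small wording quibble: the assertion that "the critical values of $g$ coincide with the critical values of $p$" is not literally true when $0$ happens to be a critical point of $p$ (since $0$ is omitted by $\exp$), but your subsequent reasoning is unaffected because you separately note $p(0)=b_0$ is the asymptotic value of $g$, so $|p(c_i)|<\rho$ still holds for every critical point $c_i$ of $p$, which is all the argument needs. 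The paper's proof has the same mild imprecision (it hedges with "possibly together with $p(0)$"). For $d=1$ Theorem~\ref{thm:crit_pts_bounded} doesn't formally apply, but there are no critical points and only the $b_0$ estimate is needed, which you obtain independently, so the case is covered.
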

\begin{proof}
	If $d=1$, then $g(z)=\exp(z)+b_0$, $g$ has the only singular value $b_0$ and the statement of the lemma is trivial. Hence assume that $d>1$.
	
	Represent $g$ in the form $g=q\circ E_\kappa$, where $E_\kappa(z)=\exp z +\kappa$ and $q(z)=z^d+a_{d-1}z^{d-1}+...+a_{1}z$. Such representation is generally not unique so just choose one of them. Then, in particular, $p(z)=q(z+\kappa)$. 
	
	Since $g'(z)=p'(\exp z)\exp z$, the critical values of the polynomial $p$ are the critical values of $g$, possibly together with $p(0)$ ($0$ is the omitted value of the exponential). Hence from Theorem~\ref{thm:crit_pts_bounded} we know that there is a constant $M_1>0$ such that all critical points of $q$ are contained in $\mathbb{D}_{M_1\sqrt[d]{\rho}}(0)$. From this, using the representation of the coefficients of $q'$ via critical points, we conclude that there is another constant $M_2>1$ such that $\abs{a_k}<M_2 \rho^\frac{d-k}{d}$.
	
	Now we need to estimate $\kappa$. The asymptotic value of $g$ is equal to $p(0)=q(\kappa)$ and $\abs{q(\kappa)}<\rho$. We want to prove that for $\rho$ big enough $\abs{\kappa}<2M_2\rho^\frac{1}{d}$. Indeed, otherwise there exist arbitrarily big $\rho$ such that $$\abs{q(\kappa)}=\abs{\kappa^d+a_{d-1}\kappa^{d-1}+...+a_{1}\kappa}=$$ $$\abs{\kappa}^d\abs{1+a_{d-1}/\kappa+...+a_1/(\kappa)^{d-1}}\geq\abs{\kappa}^d(1-\abs{a_{d-1}/\kappa}-...-\abs{a_1/(\kappa)^{d-1}})\geq$$
	$$\abs{\kappa}^d(1-\abs{\frac{M_2\rho^{1/d}}{2M_2\rho^{1/d}}}-...-\abs{\frac{M_2\rho^{(d-1)/d}}{(2M_2\rho^{1/d})^{d-1}}})>$$
	$$\abs{\kappa}^d(1-\frac{1}{2}-...-(\frac{1}{2})^{d-1})=\frac{\abs{\kappa}^d}{2^{d-1}}>2M_2^d\rho>\rho,$$
	which contradicts to the condition that $\abs{q(\kappa)}<\rho$. 
	
	Using the estimates on $\kappa$, $a_k$ and Newton's binomial formula we obtain the required estimate for $b_k$.
\end{proof}

Next statement describes the behavior of a polynomial $p$ outside of a disk containing the singular values of $p\circ\exp$ subject to the condition that this disk is big enough. 

\begin{lmm}[Preimages of outer disks]
	\label{lmm:complement_of_big_disk_under_preimage} Fix some integer $d>1$. If $\rho>1$ is big enough, then for every $g=p\circ\exp$ with monic polynomial $p$ of degree $d$ and singular values contained in $\mathbb{D}_\rho(0)$ we have:
	\begin{enumerate}
		\item $p^{-1}(\mathbb{D}_r(0))\subset\mathbb{D}_r(0)$ if $r\geq\rho$;
		\item $p^{-1}(\mathbb{D}_r(0))\supset\mathbb{D}_{\rho^2}(0)$ if $r\geq\rho^{2d+1}$.
	\end{enumerate}
\end{lmm}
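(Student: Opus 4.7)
The plan is to reduce both claims to coefficient estimates delivered by the preceding Lemma (Singular values bound coefficients), and then to argue by the standard ``monic polynomial looks like $z^d$ at infinity'' principle, with the exponents $1$, $2$, and $2d+1$ chosen so that all error terms become negligible once $\rho$ is sufficiently large.

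Write $p(z)=z^d+b_{d-1}z^{d-1}+\cdots+b_0$. Since the singular values of $g=p\circ\exp$ lie in $\mathbb{D}_\rho(0)$, the previous lemma furnishes a universal $L>0$ with $|b_k|<L\rho^{(d-k)/d}$ for all $k=0,\dots,d-1$. For part~(1), I will start from $|z|\geq r\geq\rho$ and estimate
\[
|p(z)|\geq |z|^d-\sum_{k=0}^{d-1}|b_k|\,|z|^k\geq |z|^d\Bigl(1-L\sum_{k=0}^{d-1}\bigl(\rho^{1/d}/|z|\bigr)^{d-k}\Bigr).
\]
Because $|z|\geq\rho\geq\rho^{1/d}$ (here $\rho\geq 1$), each ratio $\rho^{1/d}/|z|$ is at most $\rho^{-(d-1)/d}$, so for $\rho$ large the sum inside the parenthesis is at most $1/2$, giving $|p(z)|\geq |z|^d/2$. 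Since $|z|\geq\rho$ with $\rho$ large forces $|z|^{d-1}/2\geq 1$, one concludes $|p(z)|\geq |z|\geq r$, so $z\notin p^{-1}(\mathbb{D}_r(0))$. This yields the contrapositive of (1).

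For part~(2), I take $|z|\leq\rho^2$ and compute the upper bound
\[
|p(z)|\leq |z|^d+\sum_{k=0}^{d-1}|b_k|\,|z|^k\leq \rho^{2d}+L\sum_{k=0}^{d-1}\rho^{(d-k)/d+2k}.
\]
Each exponent $(d-k)/d+2k=1+k(2-1/d)$ is maximized at $k=d-1$, giving exponent $2d-2+1/d<2d$; hence for $\rho$ sufficiently large the leading term $\rho^{2d}$ absorbs the entire sum, and $|p(z)|\leq 2\rho^{2d}\leq\rho^{2d+1}\leq r$. Therefore $\mathbb{D}_{\rho^2}(0)\subset p^{-1}(\mathbb{D}_r(0))$, which is (2).

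There is no real obstacle here: the proof is purely a matter of plugging the estimate $|b_k|<L\rho^{(d-k)/d}$ into the two obvious bounds for a monic polynomial and verifying that the exponents on $\rho$ line up. The mildly subtle point, which determines why the radius $\rho^2$ in (2) must be paired with $r\geq\rho^{2d+1}$ (rather than just $r\geq\rho^{2d}$), is that the cross terms $|b_k|\,|z|^k$ already contribute order $\rho^{2d-2+1/d}$, so a single extra factor of $\rho$ is needed to dominate the constant $L$ uniformly in $d$ and absorb the sum; this is where ``$\rho$ big enough'' (depending on $d$ and $L$, hence universal) is actually used.
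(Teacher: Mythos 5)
Your proof is correct and follows essentially the same route as the paper's: plug the coefficient bounds $|b_k|<L\rho^{(d-k)/d}$ from the preceding lemma into the standard lower/upper estimates for a monic polynomial, then observe that the error terms are absorbed once $\rho$ is large. The only difference is cosmetic — you argue the contrapositive of (1) directly rather than by contradiction — so nothing further is needed.
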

\begin{proof}	
	Let $p(z)=z^d+b_{d-1}z^{d-1}+...+b_1 z+b_0$.	
	From Lemma~\ref{lmm:bounds_of_coefficients_through_SV} we know that if singular values of $g$ are contained in $\mathbb{D}_\rho(0)$, then $\abs{b_k}<L\rho^\frac{d-k}{d}$.
	
	(1) Let $p(z)=w$ where $\abs{w}>\rho$. Assume that $\abs{z}\geq\abs{w}>\rho$. Then we have $$\abs{w}=\abs{p(z)}=\abs{z^d+b_{d-1}z^{d-1}+...+b_1 z+b_0}=$$
	$$\abs{z}^d\left|1+\frac{b_{d-1}}{z}+...+\frac{b_1}{z^{d-1}}+\frac{b_0}{z^d}\right|\geq$$
	$$\abs{z}^d\left(1-\left|\frac{b_{d-1}}{z}\right|-...-\left|\frac{b_1}{z^{d-1}}\right|-\left|\frac{b_0}{z^d}\right|\right)>$$
	$$\abs{w}^d\left(1-\left|\frac{L\rho^{1/d}}{\rho}\right|-...-\left|\frac{L\rho^{(d-1)/d}}{\rho^{d-1}}\right|-\left|\frac{L\rho}{\rho^d}\right|\right)>\abs{w}$$
	if $\rho$ is sufficiently big.
	
	(2) We want to prove that $p(\mathbb{D}_{\rho^2}(0))\subset\mathbb{D}_{\rho^{2d+1}}(0)$ if $\rho$ is sufficiently big. Let $\abs{z}<\rho^2$. Then 
	
	$$\abs{p(z)}=\abs{z^d+b_{d-1}z^{d-1}+...+b_1 z+b_0}\leq$$
	$$\abs{z^d}+\abs{b_{d-1}z^{d-1}}+...+\abs{b_1 z}+\abs{b_0}\leq$$
	$$\rho^{2d}+L\rho^{1/d}\rho^{2(d-1)}+...+L\rho^{(d-1)/d}\rho^2+L\rho<(dL+1)\rho^{2d}<\rho^{2d+1}$$
	if $\rho$ is big enough.
\end{proof}

\section{Acknowledgements}

I would like to express our gratitude to our research team in Aix-Marseille Universit\'e, especially to Dierk Schleicher who supported this project from the very beginning, Sergey Shemyakov who carefully proofread all drafts, as well as to  Kostiantyn Drach, Mikhail Hlushchanka, Bernhard Reinke and Roman Chernov for uncountably many enjoyable and enlightening discussions of this project at different stages. I also want to thank Dzmitry Dudko for his multiple suggestions that helped to advance the project, Lasse Rempe for his long list of comments and relevant questions, and Adam Epstein for important discussions especially in the early stages of this project. 

Finally, I am grateful to funding by the Deutsche Forschungsgemeinschaft and the European Research Council with the Advanced Grant “Hologram” (695621) whose support provided excellent conditions for the development of this project.

\end{document}